\definecolor{Burgundy1}{RGB}{128,0,32}
\crefname{equation}{}{}
\numberwithin{equation}{section}
\newtheorem{theorem}[equation]{Theorem}
\newtheorem{lemma}[equation]{Lemma}
\newtheorem{cor}[equation]{Corollary}
\Crefname{cor}{Corollary}{Corollaries}
\newtheorem{proposition}[equation]{Proposition}
\newcommand{\hooklongrightarrow}{\lhook\joinrel\longrightarrow}
\theoremstyle{remark}
\newtheorem{remark}[equation]{Remark}
\theoremstyle{definition}
\newtheorem{example}[equation]{Example}
\theoremstyle{definition}
\theoremstyle{definition}
\newtheorem{defi}[equation]{Definition}
\theoremstyle{definition}
\newtheorem{notation}[equation]{Notation}
\theoremstyle{definition}
\theoremstyle{definition}
\newtheorem{assumption}[equation]{Assumption}
\theoremstyle{definition}
\DeclareSymbolFont{cyrletters}{OT2}{wncyr}{m}{n}
\DeclareMathSymbol{\Sha}{\mathalpha}{cyrletters}{"58}
\address{School of Mathematics and Statistics, University of Glasgow, University Place, Glasgow, G12 8QQ.}
\email{ajmorgan44@gmail.com}
\begin{document}


\title{Hasse Principle for Kummer Varieties in the case of generic $2$-torsion}

\author{Adam Morgan}
\setcounter{tocdepth}{1}
\maketitle

\begin{abstract}
Conditional on finiteness of relevant Shafarevich--Tate groups, Harpaz and
Skorobogatov used Swinnerton-Dyer's descent-fibration method to establish the Hasse principle for Kummer varieties associated to a 2-covering of a principally polarised abelian variety  under certain largeness assumptions on its mod $2$ Galois image.  Their method breaks down, however,  when 
the Galois image is maximal, due to the possible failure of the Shafarevich--Tate
group of quadratic twists of $A$ to have square order. In this work we overcome this obstruction by combining second descent ideas in the spirit of Harpaz and Smith with results on the parity of $2^\infty$-Selmer ranks in quadratic twist families. This allows Swinnerton-Dyer's method to be successfully applied to K3 surfaces arising as quotients of $2$-coverings of Jacobians of genus $2$ curves with no rational Weierstrass points. 
\end{abstract}

\tableofcontents
\vspace{-10pt}

\section{Introduction}

The aim of this article is to extend the scope of a method developed in works of Swinnerton-Dyer, Harpaz and Skorobogatov \cite{SkoroSwinDyer2005,HS16,Har19}, that uses the behaviour of $2$-Selmer groups in quadratic twist families to study rational points on certain K3 surfaces over number fields. More generally, the method applies to  \textit{generalised Kummer varieties} arising as the  minimal desingularisation of the quotient of a $2$-covering of a principally polarised abelian variety by its antipodal involution. The method has its origins in work of Swinnerton-Dyer \cite{SwinDyer1995}, and is often referred to as the \textit{descent-fibration} method. For related works in a slightly different setting, including that of $K3$ surfaces fibred in genus $1$ curves, see \cite{skoroswin98,SwinDyer2000,SwinDyer2001,witt07,Har19b} and references therein.

Swinnerton-Dyer's method is currently the only known method able to say something about the existence of rational points on K3 surfaces, albeit conditional on finiteness of Shafarevich--Tate groups (and sometimes Schinzel's hypothesis). In particular, it provides some of the only evidence for the conjecture that the Brauer--Manin obstruction is the only obstruction to the Hasse principle for such surfaces; cf. \cite[p. 77]{Skoro2009} and \cite[p. 484]{SkoroZarhin2008}.

In line with most existing results on the behaviour of $2$-Selmer groups in quadratic twist families (a notable exception being the recent work of Smith \cite{smith2022distribution1,smith2022distribution2}), the cited works split broadly into $2$ categories. The papers \cite{SkoroSwinDyer2005,Har19} consider the case where all $2$-torsion points of the abelian variety are defined over the base field. By contrast, the work \cite{HS16}, which builds on Selmer group techniques developed by Mazur and Rubin \cite{MazRub2010}, has various large image assumptions on the Galois action on the $2$-torsion. 

The present work fits into the latter category, and treats the case of principally polarised abelian varieties where the Galois action on the $2$-torsion is maximal, i.e. equal to the full sympletic group of automorphisms preserving the Weil pairing. This is made possible by the removal of an assumption, present in all previous iterations of the method, that the principal polarisation be induced by a symmetric line bundle. The relevance of this geometric condition is that it forces the Cassels--Tate pairing on the Shafarevich--Tate groups of all quadratic twists of the abelian variety to be alternating. As shown by Poonen--Stoll  \cite{MR1740984}, this need not hold in general.

Accounting for the possible failure of the Cassels--Tate pairing to be alternating necessitates significant changes to the arguments. We discuss the relevant adaptations in \Cref{sec:adaptations}, but mention that, compared to \cite{HS16}, the key new features of our work  consist of a version of the second descent step introduced by Harpaz \cite{Har19} in the case of full rational $2$-torsion, and the incorporation of results on the parity of $2^\infty$-Selmer ranks in quadratic twist families building on my previous work \cite{MR3951582}. 

The distinction between the cases we can handle and those treated in the work of Skorobogatov--Harpaz is most readily visible when restricted to Jacobians of genus $2$ curves. Here for the first time, our work allows Swinnerton-Dyer's method to be successfully applied to Jacobians of curves with no rational Weierstrass point. We will discuss this case now, postponing a full statement of the main result to \Cref{subset_main_results}.  Since the case of genus $2$ Jacobians is the one pertaining to K3 surfaces, this is already one of the most important instances of our work. 

For simplicitly we take $\mathbb{Q}$ as our base-field, though an analagous statement remains true over an arbitrary number field  (see \Cref{thm:hyp_curves_main}). A genus $2$ curve $C$ over $\mathbb{Q}$ is necessarily hyperelliptic, and can be represented by a Weierstrass equation
\begin{equation} \label{weierstrass equation}
C:y^2=f(x), 
\end{equation}
where $f(x)\in \mathbb{Z}[x]$ is a squarefree polynomial of degree $6$. Such a curve has a rational Weierstrass point if and only if $f(x)$ has a rational root, in which case a change of variables can be used to bring the equation \eqref{weierstrass equation} into a form where the defining polynomial has degree $5$ instead. The $2$-division field of the Jacobian $J$ of $C$ coincides with the splitting field of $f(x)$, hence is naturally viewed as a subgroup of the symmetric group $S_6$. Our main result, \Cref{main_thm}, implies the following. In the statement, $H^1(\mathbb{Q},J[2])$ denotes the first Galois cohomology group of the Galois module of $2$-torsion points in $J$; it parametrises isomorphism classes of $2$-coverings of the Jacobian of $C$.

\begin{theorem} \label{Theorem_degree_6}
Suppose that the Galois group of $f(x)$ is isomorphic to $S_6$.   Let $\mathfrak{a}\in  H^1(\mathbb{Q},J[2])$ and suppose there is an odd prime $p$ such that $\mathfrak{a}$ is unramified at $p$, the leading coefficient of $f(x)$ is coprime to $p$, and the discriminant of $f(x)$ is divisible by $p$ exactly once. 
Assume that the $2$-primary part of the Shafarevich--Tate group of every quadratic twist of $J$ is finite. 

Then the generalised Kummer surface $X$ associated to $\mathfrak{a}$ satisfies the Hasse principle. Moreover, if $X$ is everywhere locally soluble and $\mathfrak{a}$ has non-trivial restriction to the splitting field of $f(x)$,  then $X$ has a Zariski-dense set of $\mathbb{Q}$-points.\footnote{For an explicit description of the Kummer K3 surface associated to $\mathfrak{a}$ as the intersection of three quadrics in $\mathbb{P}^5$, as well as explicit conditions ensuring that the class $\mathfrak{a}$ is unramified at $p$, see \Cref{Kummer_intersection_of_quadrics}.}   
\end{theorem}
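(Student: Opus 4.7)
The plan is to derive \Cref{Theorem_degree_6} as a specialisation of the main general result \Cref{main_thm}, which handles $2$-coverings of principally polarised abelian varieties under the assumption that the mod-$2$ Galois image is maximal. Three things need checking: a Galois-theoretic hypothesis on $J[2]$, a local hypothesis at the prime $p$, and (for the density statement) that an appropriate quadratic twist of $J$ has positive Mordell-Weil rank.

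For the Galois input, one identifies $J[2]$ with the quotient $\ker(\Sigma)/\langle(1,\ldots,1)\rangle$, where $\Sigma : \mathbb{F}_2^6 \to \mathbb{F}_2$ is the sum map, on which $\mathrm{Gal}(\bar{\Q}/\Q)$ acts through the permutation action of $\mathrm{Gal}(f)$ on the six roots of $f$. The full group of automorphisms of this module preserving the Weil pairing is $\mathrm{Sp}_4(\mathbb{F}_2) \cong S_6$, so the hypothesis $\mathrm{Gal}(f) \cong S_6$ translates exactly to the Galois action on $J[2]$ being maximal, matching the Galois assumption of \Cref{main_thm}.

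For the local input, the conditions that the leading coefficient of $f$ be coprime to $p$ and that $v_p(\mathrm{disc}(f))=1$ force $C$ to have semistable reduction at $p$ with a single node; hence $J$ acquires semistable reduction at $p$ of toric rank $1$. Combined with $\mathfrak{a}$ being unramified at $p$ and the assumed finiteness of $\Sha(J^d)[2^\infty]$ for all twists $d$, this should provide the auxiliary prime at which the descent-fibration argument of \Cref{main_thm} is initiated, with the parity control of $2^\infty$-Selmer ranks in the twist family supplied by \cite{MR3951582}. The Hasse principle for $X$ then follows immediately.

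For Zariski density, I would use that a $\Q$-point of the smooth locus of $X$ lifts to a pair of antipodal $\Q$-points on some quadratic twist $V^d$ of the $2$-covering $V$ representing $\mathfrak{a}$; the hypothesis that $\mathfrak{a}$ has nontrivial restriction to the splitting field of $f$ ensures that $V^d$ is a nontrivial torsor under $J^d$. The same descent-fibration argument should in fact produce a twist $d$ for which $J^d(\Q)$ has positive rank, after which the $J^d(\Q)$-orbit of a rational point of $V^d$ is Zariski-dense in $V^d$, and hence its image in $X$ is Zariski-dense there. The main obstacle throughout will be reconciling the stated hypotheses — especially the very weak local condition of a single node — with the abstract framework of \Cref{main_thm}: in earlier iterations of the method one required a stronger local condition tied to the Cassels--Tate pairing on $\Sha(J^d)$ being alternating, and the entire contribution of the present work, via the second-descent refinement of \cite{Har19} together with the parity results of \cite{MR3951582} discussed in \Cref{sec:adaptations}, is precisely to permit the weaker single-node hypothesis used here.
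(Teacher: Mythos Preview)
Your approach is essentially the paper's: reduce to \Cref{main_thm} by verifying \Cref{existence_of_frob_elements_prop_1} (via $G\cong S_6\cong\mathrm{Sp}_4(\mathbb{F}_2)$, cf.\ \Cref{ex:2-tors}) and \Cref{assumption:local_solubility} (via the single-node reduction at $p$). One detail you gloss over is that \Cref{assumption:local_solubility} also requires the N\'eron component group at $p$ to have odd order; this holds because with $\mathrm{ord}_p(\mathrm{disc}(f))=1$ the component group is in fact trivial, but you should say so.

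There is one genuine gap. \Cref{main_thm} requires $\mathfrak{a}\notin H^1(G,J[2])$ as part of \Cref{assumption:local_solubility}, so your argument only yields the Hasse principle when $\mathfrak{a}$ has nontrivial restriction to the splitting field of $f$. But the first conclusion of \Cref{Theorem_degree_6} asserts the Hasse principle for \emph{every} $\mathfrak{a}$ meeting the stated local condition. The paper handles the remaining case $\mathfrak{a}\in H^1(G,J[2])=\{0,\mathfrak{w}\}$ (cf.\ \Cref{prop:when_iso_to_S}) directly and unconditionally: if $\mathfrak{a}=0$ then $X$ is the blow-up of $J/\{\pm 1\}$ at the image of $J[2]$ and the exceptional line over $0$ is rational; if $\mathfrak{a}=\mathfrak{w}$ then $X$ contains the image of $C/\iota\cong\mathbb{P}^1$. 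Either way $X(\mathbb{Q})\neq\emptyset$, so the Hasse principle holds trivially.

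A smaller point on your density paragraph: the assertion that ``$V^d$ is a nontrivial torsor under $J^d$'' cannot be right once $V^d$ has a rational point. The hypothesis $\mathfrak{a}\notin H^1(G,J[2])$ is not used at that stage; it is used upstream to permit the application of \Cref{main_thm}. Once \Cref{main_thm} has produced a twist $\chi$ with $\mathrm{rk}_2(J^\chi/\mathbb{Q})=1$ and $\mathfrak{a}$ trivial in $\Sha(J^\chi/\mathbb{Q})$, Zariski density follows from the positive rank of $J^\chi$ together with the simplicity of $J[2]$ as an $\mathbb{F}_2[G]$-module (part (A) of \Cref{existence_of_frob_elements_prop_1}), exactly as in the last paragraph of the proof of \Cref{main_thm}.
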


By contrast, \cite[Theorem B]{HS16} establishes essentially the same result in the case that $f(x)$ has degree $5$ and Galois group $S_5$ (in the presence of other assumptions there, the condition that the polynomial be irreducible in that result is equivalent to it having Galois group $S_5$; see the proof of Theorem B in \cite{HS16}). 

The conditions in \Cref{Theorem_degree_6} apply to reasonably generic genus $2$ curves. Indeed, as above,  any such curve can be represented by an equation of the form \eqref{weierstrass equation}. Moreover, a result of van der Waerden implies that, when ordered by the maximum of the absolute values of the coefficients, $100\%$ of integer sextic polynomials have Galois group $S_6$ (see recent work of Bhargava \cite{Bhargava2021} for the strongest statement). Per work of Bhargava--Shankar--Wang \cite{Bhargava2022}, for example, the existence of an odd prime $p$ as in the statement is also not particularly restrictive. 

Before describing the results of the paper in more detail, we address a point regarding \Cref{Theorem_degree_6}. In light of the discussion before its statement, one might wonder why the conclusion is that the Kummer surface  satisfy the Hasse principle, with no mention of the Brauer--Manin obstruction. This is also the case in \cite{HS16}. There, the explanation is found in work of Skorobogatov--Zarhin \cite{SkoroZarhin2017}, showing (unconditionally) that the assumptions on the abelian variety force the relevant part of the Brauer group to vanish. It seems plausible that similar results hold true here. We remark also that, by contrast, the Brauer--Manin obstruction \textit{is} seen to intervene in the work of Harpaz \cite{Har19}. 

\subsection{Statement of the main result} \label{subset_main_results}
We now describe in detail the main results of the paper.
Let $A$ be a principally polarised abelian variety of dimension $d\geq 2$, defined over a number field $K$. Let $L=K(A[2])$ and let $G=\textup{Gal}(L/K)$,  viewed as a subgroup of automorphisms of $A[2]$ preserving the Weil pairing (with respect to the given principal polarisation).   Denote by $\mathfrak{c}\in H^1(K,A[2])$ the torsor parametrising symmetric line bundles inducing the given principal polarisation (see \Cref{ssec:class_c}); equivalently, $\mathfrak{c}$ is the cohomology class associated to the $G_K$-set of quadratic refinements of the Weil pairing on $A[2]$. It arises via inflation from $H^1(G,A[2])$. For each $g\in G$, denote by $\left \langle g \right \rangle$ the subgroup of $G$ generated by $g$.

\begin{assumption} \label{existence_of_frob_elements_prop_1}
We make the following assumptions on $G$:
\begin{itemize}

\item[(A)] $A[2]$ is a simple $\mathbb{F}_2[G]$-module and $\textup{End}_G(A[2])=\mathbb{F}_2,$
\item[(B)] there exists an element $g\in G$ such that $\dim A[2]^{g}=0$,
\item[(C)] the group 
\begin{equation} \label{intersection_c_contains}
\bigcap_{g\in G}\ker\Big(H^1(G,A[2])\stackrel{\textup{res}}{\longrightarrow}  H^1(\left \langle g\right \rangle, A[2]) \Big) 
\end{equation}
 is generated by the class $\mathfrak{c}$ (which we allow to be $0$).
\end{itemize}
\end{assumption}

\begin{remark}
It is always the case that the class $\mathfrak{c}$ lies in the intersection appearing in \eqref{intersection_c_contains}; see \Cref{ssec:class_c}.
\end{remark}

 \begin{example} \label{ex:2-tors}
All parts of \Cref{existence_of_frob_elements_prop_1} are  satisfied when: 
\begin{itemize}
\item[(i)] $G$ is the full group $\textup{Sp}(A[2])$ of symplectic automorphisms of $A[2]$ preserving the Weil pairing (hence isomorphic to $\textup{Sp}_{2d}(\mathbb{F}_2)$) or,
\item[(ii)]   $A$ is the canonically principally polarised Jacobian of a hyperelliptic curve $y^2=f(x)$, where  $f(x)\in K[x]$ is a squarefree polynomial of even degree $2d+2\geq 6,$ such that $f(x)$ has Galois group isomorphic to the symmetric group $S_{2d+2}$.
\end{itemize}
Indeed, in case (i) parts (A) and (B) are easy to check. For part (C) when $G\cong \textup{Sp}_{2d}(\mathbb{F}_2)$ (which includes the $d=2$ case of (ii)), see work of Pollatsek \cite[Section 4]{Pol71}  which shows that the group  $H^1(G,A[2])$ is isomorphic to $\mathbb{F}_2$, generated by the class $\mathfrak{c}$. For case (ii) when $d\geq 3$, see \Cref{sec:specialising_to_hyp_curves}.
\end{example}

\begin{notation} 
Fix a class $\mathfrak{a}$ in $H^1(K,A[2])$. We denote by $Y$ the  $2$-covering of $A$ corresponding to $\mathfrak{a}$, and denote by $X=\textup{Kum}(Y)$ the corresponding generalised Kummer variety, defined as in \cite[Section 6]{HS16}.
\end{notation}

\begin{assumption}\label{assumption:local_solubility}
Assume that $\mathfrak{a}$ is not in $H^1(G,A[2])$ (equivalently, $\mathfrak{a}$ has non-trivial restriction to $H^1(L,A[2])$). Assume also that there is an odd prime $\mathfrak{p}_0$ of $K$ such that: $\textup{res}_{\mathfrak{p}_0}(\mathfrak{a})$ is unramified,  $A$ has semistable reduction and odd conductor exponent at $\mathfrak{p}_0$, and the (group of geometric points of the) N\'{e}ron component group of $A$ over $K_{\mathfrak{p}_0}$ has odd order. 
\end{assumption}


Our main result is the following: 

\begin{theorem} \label{main_thm}
Assume that the $2$-primary part of the Shafarevich--Tate group of every quadratic twist of $A$ with $2^\infty$-Selmer rank $1$ is finite. Under Assumptions \ref{existence_of_frob_elements_prop_1} and \ref{assumption:local_solubility} above, if  $X(K_v)\neq \emptyset$ for all places $v$ of $K$, then  $X$ has a Zariski-dense set of $K$-points. In particular, $X$ satisfies the Hasse principle. 
\end{theorem}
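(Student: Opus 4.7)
The plan is to produce, for a suitably chosen squarefree $d\in K^\times/K^{\times 2}$, a $K$-rational point on the twisted $2$-covering $Y_d$ of the quadratic twist $A_d$; since the antipodal involution is already defined over $K$, the Kummer variety $\textup{Kum}(Y_d)$ coincides with $X$ over $K$, and any such point maps to a $K$-point of $X$. Varying $d$ through an infinite family of such twists will then yield Zariski-density. The twist $d$ will be specified by a finite collection of local conditions: at a fixed finite set $S$ of ``bad'' places (archimedean, places of bad reduction of $A$, of ramification of $\mathfrak{a}$, above $2$, and the distinguished prime $\mathfrak{p}_0$), the local behaviour of $d$ is prescribed so as to guarantee everywhere local solubility of $Y_d$ (feasible by the hypothesis $X(K_v)\neq\emptyset$ for all $v$); at the remaining places $d$ is formed from uniformisers at auxiliary primes $\mathfrak{q}\notin S$ whose Frobenius in $G=\textup{Gal}(L/K)$ is prescribed by Chebotarev density.

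First I would carry out a first descent in the style of Mazur--Rubin, following the adaptation of \cite{HS16}. At auxiliary primes $\mathfrak{q}$ with Frobenius $g$ satisfying $A[2]^g=0$ (which exist by \Cref{existence_of_frob_elements_prop_1}(B)), the local cohomology of $A[2]$ is one-dimensional and the local Selmer conditions can be modified independently by varying $d$ at $\mathfrak{q}$. Irreducibility of $A[2]$ together with $\textup{End}_G(A[2])=\mathbb{F}_2$ (\Cref{existence_of_frob_elements_prop_1}(A)) and the characterisation of $\mathfrak{c}$ as generating the locally trivial classes (\Cref{existence_of_frob_elements_prop_1}(C)) are what permit the delicate global-to-local linear algebra in the spirit of \cite{MazRub2010,HS16}: one arranges that the $2$-Selmer group of $A_d$ is spanned by the image of $A_d[2](K)$ and the Selmer class of $Y_d$.

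The remaining difficulty is that this Selmer class may a priori lie in $\Sha(A_d)[2]$ rather than come from $A_d(K)$. In the previously treated cases this was excluded because the Cassels--Tate pairing was alternating on $\Sha(A_d)[2]$; in our setting, with the symmetric-line-bundle hypothesis dropped, alternation is no longer automatic, so $\dim_{\mathbb{F}_2}\Sha(A_d)[2]$ may be odd. I would overcome this by combining two ingredients: a second descent step, in the spirit of \cite{Har19}, which uses additional auxiliary primes to refine $d$ and either annihilate or detect the Cassels--Tate obstruction on $[\mathfrak{a}_d]$; and results on the parity of $2^\infty$-Selmer ranks in quadratic twist families building on \cite{MR3951582}. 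The odd conductor exponent and odd-order N\'eron component group hypotheses on $\mathfrak{p}_0$ in \Cref{assumption:local_solubility} feed into the parity formula to pin down the parity of $\dim_{\mathbb{F}_2}\textup{Sel}_{2^\infty}(A_d)$ under twist. Together with the assumed finiteness of $\Sha(A_d)[2^\infty]$ whenever the $2^\infty$-Selmer rank equals $1$, this forces $\textup{rk}\,A_d(K)=1$, so that the non-trivial Selmer class lifts to a $K$-rational point of $Y_d$. Zariski-density follows by running the argument with varying sets of local conditions and collecting the resulting points.

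The principal obstacle will be this last step: orchestrating the second descent together with the $2^\infty$-Selmer parity analysis in the current generic-Galois setting, where neither full rational $2$-torsion nor the alternation of Cassels--Tate is at one's disposal. The auxiliary primes required by the second descent must be chosen compatibly with those already used in the first descent, and the parity input must not conflict with the refined Selmer conditions; reconciling these demands is where most of the technical work will lie, and it is precisely the point at which the removal of the symmetric-line-bundle assumption bites.
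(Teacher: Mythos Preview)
Your proposal contains a concrete error that derails the first-descent step. You write that at auxiliary primes $\mathfrak{q}$ with Frobenius $g$ satisfying $A[2]^g=0$, ``the local cohomology of $A[2]$ is one-dimensional''. In fact, at such primes $H^1(K_{\mathfrak{q}},A[2])=0$: by the local Euler characteristic formula (or \Cref{selmer_conds_lemma}(i)), $\dim H^1(K_{\mathfrak{q}},A[2])=2\dim A(K_{\mathfrak{q}})[2]=0$. So twisting by a uniformiser at such a prime cannot modify the local Selmer condition at all---it leaves the $2$-Selmer group unchanged (this is precisely \Cref{selmer_agreement_lemma}). The Mazur--Rubin mechanism you invoke requires primes with $\dim A[2]^g=1$, which is exactly Assumption~(c) of \cite{HS16} that the present setting drops (cf.\ \Cref{rem_g_1_fix}). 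Moreover, even granting some first-descent mechanism, you cannot shrink the $2$-Selmer group below $\langle \mathfrak{a},\mathfrak{c}\rangle$: the class $\mathfrak{c}$ is everywhere locally trivial and lies in $\textup{Sel}^2(A^\chi/K)$ for \emph{every} quadratic twist $\chi$, so ``spanned by $A_d[2](K)$ and the Selmer class'' is unattainable when $\mathfrak{c}\neq 0$ (note $A_d[2](K)=0$ by \Cref{existence_of_frob_elements_prop_1}(A)).

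The paper therefore takes a structurally different route. It \emph{abandons} the first-descent step entirely and instead exploits the very fact that primes with $A[2]^{\textup{Frob}_{\mathfrak{q}}}=0$ preserve the $2$-Selmer group: over such a family of twists the $2$-Selmer group is fixed, but the Cassels--Tate pairing on it varies. The technical heart is \Cref{key_cassels_tate_prop_4_sel}, which shows that one can realise \emph{any} admissible pairing as $\textup{CTP}_\chi$ while keeping $\textup{Sel}^2$ and the parity of $\textup{rk}_2$ unchanged. One then chooses the pairing so that $\mathfrak{a}$ is the unique nonzero element of its kernel; combined with the odd $2^\infty$-Selmer rank (arranged at the outset via the prime $\mathfrak{p}_0$ and \Cref{omega_prop}), this forces $\textup{rk}_2(A^\chi/K)=1$ and $\mathfrak{a}$ divisible in $\Sha$, whence trivial under the finiteness hypothesis. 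Zariski-density then comes from a single twist (positive rank makes $Y^\chi(K)$ dense in $Y^\chi$), not from aggregating points across many $d$. Your instinct that parity and a second-descent idea are the key ingredients is correct, but the orchestration is the reverse of what you describe: the ``second descent'' (variation of CTP) replaces rather than follows the first.
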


\begin{remark} \label{rem_g_1_fix}
The case when, in addition to \Cref{existence_of_frob_elements_prop_1}, we have  $H^1(G,A[2])=0$ (hence $\mathfrak{c}=0$) and an element $g\in G$ for which $\dim A[2]^g=1$, is treated in \cite[Theorem 2.3]{HS16}. These assumptions preclude both cases (i) and (ii) of \Cref{ex:2-tors} above. Even when $H^1(G,A[2])=0$, it can happen that no $g\in G$ has  $\dim A[2]^g=1$, as is the case for Jacobians of odd degree hyperelliptic curves $y^2=f(x)$ where $f(x)$ has degree $2d+1\geq 5$ and Galois group the alternating group $A_{2d+1}$. 

We remark however that \cite{HS16} considers moreover  products of abelian varieties which individually satisfy suitable large Galois image conditions with respect to their $2$-torsion. We have elected not to consider this possible extension in our work.
\end{remark}

\begin{remark} \label{rem:dim_2_subgroups} 
When $\dim A=2$ we have $\textup{Sp}_{4}(\mathbb{F}_2)\cong S_6$. There are, up to conjugacy in $S_6$, $9$ subgroups for which   \Cref{existence_of_frob_elements_prop_1} holds. Of these, $S_6$, $A_6$, and $S_5$ embedded transitively as $T6.13$, have $\mathfrak{c}\neq 0$. Here we are using the notation of \cite[Section 2.9]{DM96}  to describe permutation groups. The remaining transitive subgroups are $T6.9$, $T6.10$ and $T6.11$. The list is completed by the subgroups $S_5$, $A_5$, and $T5.3$. By contrast, the subgroups satisfying \cite[Assumptions (a) - (d)]{HS16} are the transitive subgroups $T6.9$ and $T6.10$, and the non-transitive subgroups $S_5$ and $T5.3$.
\end{remark}

When $A$ is the Jacobian of a hyperelliptic curve,  \Cref{main_thm} implies the following result, generalising \Cref{Theorem_degree_6}.  

\begin{cor} \label{thm:hyp_curves_main}
Let $d\geq 2$ and let $f(x)\in K[x]$ be a squarefree polynomial of degree $2d+2$ and Galois group $S_{2d+2}$. Let $A$ be the Jacobian of the hyperelliptic curve $y^2=f(x)$ and let $\mathfrak{a} \in  H^1(K,A[2])$. 
Suppose  there is an odd prime $\mathfrak{p}_0$ of $K$ such that $\mathfrak{a}$ is unramified at $\mathfrak{p}_0$,  $f(x)$ is integral at $\mathfrak{p}_0$, and  such that
\[\textup{ord}_{\mathfrak{p}_0}\textup{disc}(f)=1\quad \textup{ and }\quad\textup{ord}_{\mathfrak{p}_0}(c_f)=0,\]   
where $\textup{disc}(f)$ is the discriminant of $f(x)$ and $c_f$ is the leading coefficient of $f(x)$. 
Assume that the $2$-primary part of the Shafarevich--Tate group of every quadratic twist of $A$ with $2^\infty$-Selmer rank  $1$ is finite.  Then the generalised Kummer variety  associated to $\mathfrak{a}$  satisfies the Hasse principle.  
\end{cor}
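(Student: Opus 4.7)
The plan is to reduce to \Cref{main_thm} by verifying its hypotheses in the hyperelliptic setting, and then to dispose separately of the finitely many ``exceptional'' classes $\mathfrak{a}\in H^1(G,A[2])$ not covered by \Cref{assumption:local_solubility}. The group-theoretic \Cref{existence_of_frob_elements_prop_1} is provided directly by \Cref{ex:2-tors}(ii), using the hypothesis $\textup{Gal}(f/K)\cong S_{2d+2}$, so the concrete work consists of translating the given local conditions at $\mathfrak{p}_0$ into \Cref{assumption:local_solubility}, and then handling the finite collection of residual classes.

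For the local translation, the hypotheses $\textup{ord}_{\mathfrak{p}_0}(c_f)=0$ and $\textup{ord}_{\mathfrak{p}_0}(\textup{disc}(f))=1$, together with $\mathfrak{p}_0$ odd, imply that the reduction $\bar f$ modulo $\mathfrak{p}_0$ has exactly one pair of coalescing roots (possibly Galois-conjugate over a quadratic extension of the residue field), with all other roots remaining distinct. A local coordinate computation, bringing the Weierstrass equation near the coalescing roots into the form $y^2=u^2-\pi$ for a uniformizer $\pi$, exhibits the minimal regular model of $y^2=f(x)$ over $\mathcal{O}_{K_{\mathfrak{p}_0}}$ as geometrically integral with a single ordinary double point of thickness $1$. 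Raynaud's description of N\'eron models of Jacobians of semistable curves then yields that $A$ has semistable reduction of toric rank $1$ at $\mathfrak{p}_0$ (so conductor exponent $1$, which is odd) and trivial N\'eron component group (since the monodromy pairing on the rank-$1$ character lattice reduces to multiplication by the edge length $1$). This verifies all parts of \Cref{assumption:local_solubility} except for the condition $\mathfrak{a}\notin H^1(G,A[2])$.

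For the residual classes, the cohomological input underlying \Cref{ex:2-tors}(ii) identifies $H^1(G,A[2])$ as cyclic, generated by $\mathfrak{c}$, so only $\mathfrak{a}=0$ and $\mathfrak{a}=\mathfrak{c}$ remain to be treated separately. The case $\mathfrak{a}=0$ is immediate: $Y=A$ admits the origin as a $K$-rational point, which descends to a $K$-point of $X$. The case $\mathfrak{a}=\mathfrak{c}$ is where I would expect the bulk of the residual work: here $Y$ is the torsor of symmetric line bundles inducing the principal polarization on $A$, which in the hyperelliptic setting admits an explicit description in terms of divisor classes on the curve. One would aim either to construct a $K$-rational point on $X$ directly from this explicit description, or to observe that this finite collection of exceptional classes can be absorbed into a mild variant of the proof of \Cref{main_thm}. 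This handling of the single class $\mathfrak{c}$ is the principal subtlety I anticipate beyond the routine verifications above.
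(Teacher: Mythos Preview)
Your overall strategy matches the paper's: verify \Cref{existence_of_frob_elements_prop_1} via the hyperelliptic description, translate the local conditions at $\mathfrak{p}_0$ into \Cref{assumption:local_solubility}, apply \Cref{main_thm}, and handle the exceptional classes $\mathfrak{a}\in H^1(G,A[2])$ separately. Your local reduction argument is more detailed than the paper's (which simply cites \cite{hyperuser}), but reaches the same conclusion.

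There is, however, a genuine error in your treatment of the exceptional classes. You assert that $H^1(G,A[2])$ is generated by $\mathfrak{c}$, but this is false when $d$ is odd. For $G\cong S_{2d+2}$, Pollatsek's computation gives $H^1(G,A[2])\cong\mathbb{F}_2$ generated by a class $\mathfrak{w}$ coming from the $G$-set of odd-sized subsets of the roots (equivalently, the connecting map from \eqref{definined_2_tors_hyp}). One has $\mathfrak{c}=\mathfrak{w}$ when $d$ is even but $\mathfrak{c}=0$ when $d$ is odd (see \Cref{prop@c_in_hyp+case}). So for odd $d$ your list $\{0,\mathfrak{c}\}$ collapses to $\{0\}$ and misses the genuinely nontrivial exceptional class $\mathfrak{w}$; your description of the torsor in terms of symmetric line bundles is then describing the wrong object.

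The case you left incomplete also has a clean resolution once you identify the correct class. The $2$-covering of $A$ associated to $\mathfrak{w}$ is $\textup{Pic}^1_{C/K}$ with its antipodal involution induced by the hyperelliptic involution $\iota$ on $C$. The Abel--Jacobi map embeds $C$ into $\textup{Pic}^1_{C/K}$ equivariantly for $\iota$, so after passing to the quotient and desingularising one obtains an embedding $\mathbb{P}^1\cong C/\iota\hookrightarrow X$. This produces a $K$-rational line on $X$, hence $X(K)\neq\emptyset$, disposing of $\mathfrak{a}=\mathfrak{w}$ without any appeal to \Cref{main_thm}.
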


\begin{remark}
In the setting of \Cref{thm:hyp_curves_main} above, if $\mathfrak{a}\notin H^1(G,A[2])$ and the Kummer variety is everywhere locally soluble  then (analogously to the conclusion of \Cref{main_thm}) it has  a Zariski-dense set of $K$-points. When $\mathfrak{a}\in H^1(G,A[2])$, an easy argument shows that the Kummer variety always has a $K$-point, hence at least satisfies the Hasse principle. See \Cref{sec:specialising_to_hyp_curves} for details.
\end{remark}

%
%
%

\subsection{Outline of the argument} \label{sec:adaptations}
As mentioned previously, the method we use to prove \Cref{main_thm}   goes back to Swinnerton-Dyer, with the closest counterpart to our work being that of   Harpaz--Skorobogatov \cite{HS16}.  Assuming for simplicity finiteness of all Shararevich--Tate groups, the strategy employed in that work to prove an analogue of \Cref{main_thm} is roughly as follows:
\begin{itemize}
\item[(1)] Assuming local solubility of $X$, one uses the fibration method to establish the existence of a quadratic character $\chi_0$ such that $\mathfrak{a}$ lies in the $2$-Selmer group $\textup{Sel}^2(A^{\chi_0}/K)$ of the quadratic twist $A^{\chi_0}$.  Here we view $\textup{Sel}^2(A^{\chi_0}/K)$ as a subgroup of $H^1(K,A[2])$ via the canonical identification $A[2]\cong A^{\chi_0}[2]$ of $G_K$-modules.
\item[(2)] Using a variant of a twisting procedure due to Mazur and Rubin \cite{MazRub2010}, one successively alters $\chi_0$ to find a $\chi_1$ such that $\textup{Sel}^2(A^{\chi_1}/K)$ is $1$-dimensional and generated by $\mathfrak{a}$.
\item[(3)] Under the assumptions of  \cite{HS16}, the Cassels--Tate pairing on $\Sha(A^{\chi_1}/K)[2^\infty]$ is (non-degenerate and) alternating by work of Poonen--Stoll  \cite{MR1740984}, forcing $\dim \Sha(A^{\chi_1}/K)[2]$ to be even. Since $\dim \textup{Sel}^2(A^{\chi_1}/K)=1$ we necessarily find that $\Sha(A^{\chi_1}/K)[2]=0$, and conclude that $\mathfrak{a}$ maps to $0$ in $H^1(K,A^{\chi_1})$. As explained in \cite[Section 6]{HS16}, this is sufficient to show that $X$ has a $K$-point, and in fact a Zariski dense set of such. 
\end{itemize}

In our setting there are issues in pulling off steps $(2)$ and $(3)$  due  to the possible non-triviality of the class $\mathfrak{c}$. Firstly, this is precisely the class shown by Poonen and Stoll to measure the obstruction to the Cassels--Tate pairing being alternating. So for quadratic characters $\chi$, it is no longer possible to conclude that $\Sha(A^{\chi}/K)[2]=0$ from the fact that $\dim\textup{Sel}^2(A^{\chi}/K)=1$  alone. Moreover, one can show  that $\mathfrak{c}$ lies in $\textup{Sel}^2(A^{\chi}/K)$ for \textit{all} quadratic characters $\chi$ (cf. \Cref{ssec:class_c}), so the most one could hope to establish via an analogue of (2) is the existence of a character $\chi_1$ such that $\textup{Sel}^2(A^{\chi_1}/K)$ is generated by $\mathfrak{a}$ and  $\mathfrak{c}$. To overcome these issues we incorporate second descent ideas, in the spirit of Harpaz   \cite{Har19} and Smith \cite{smi16}, and study the variation of the Cassels--Tate pairing on $2$-Selmer groups under quadratic twist.  Actually, instead of first twisting to make the $2$-Selmer group as small as possible before moving onto the second descent step  (as is done in  \cite{Har19}), we find it more efficient to simply abandon the first descent step and pass straight to the second; this is what allows us to remove Assumption (C) of \cite{HS16}.   Our strategy is thus as follows:
\begin{itemize}
\item As  in \cite{HS16}, we first show that local solubility of $X$ implies the existence of a quadratic character $\chi_0$ such that $\mathfrak{a}$ lies in $\textup{Sel}^2(A^{\chi_0}/K)$. In addition, by extending results of \cite{MR3951582} concerning the behaviour of parities of $2^\infty$-Selmer ranks in quadratic twist families, we can (crucially) also assume that $A^{\chi_0}$ has odd $2^\infty$-Selmer rank; see \Cref{parity section} for more details. It is this last step that requires \Cref{assumption:local_solubility}.
\item Using a variant of an observation of Mazur--Rubin,  we give a method for producing many quadratic twists all having odd $2^\infty$-Selmer rank and identical $2$-Selmer group to that of $A^{\chi_0}$ (see \Cref{selmer_agreement_lemma} and  \Cref{approximating_extensions_lemma}). Our approach to this step uses \Cref{existence_of_frob_elements_prop_1} (B). 
\item We then show that, as we vary over twists as in the previous step, the resulting Cassels--Tate pairings on the common $2$-Selmer group vary over all possible pairings subject to certain forced structure; this is the key technical result of the paper, and is described in detail below.  This step crucially uses part (A) and (B) of  \Cref{existence_of_frob_elements_prop_1}. Having done this, it is easy to find such a pairing having only $\mathfrak{a}$ in its kernel. For a corresponding twist $\chi$, $\mathfrak{a}$ is the unique element lifting to the $4$-Selmer group of $A^\chi$. Since $A^\chi$ has odd  $2^\infty$-Selmer rank by construction, the only possibility, assuming finiteness of $\Sha(A^\chi/K)[2^\infty]$, is that $\mathfrak{a}$ has trivial image in $\Sha(A^\chi/K)[2^\infty]$. We can then conclude as before that the Kummer variety associated to $\mathfrak{a}$ has a Zariski-dense set of $K$-points. 
\end{itemize}

\subsection{Variation of the Cassels--Tate pairing under quadratic twist}
As mentioned, a key step in the proof of \Cref{main_thm}  shows that we can twist by quadratic characters in such a way that the $2$-Selmer group is preserved, but the Cassels--Tate pairing on the $2$-Selmer group takes any shape we desire, subject only to certain structural constraints. 

To explain the result, let $A$ be a principally polarised abelian variety satisfying parts (A) and (B) of \Cref{existence_of_frob_elements_prop_1}. Call a $\tfrac{1}{2}\mathbb{Z}/\mathbb{Z}$-valued bilinear pairing $P$ on $\textup{Sel}^2(A/K)$   \textit{admissible} if
\[\phantom{move accr} P(\mathfrak{a},\mathfrak{a})=P(\mathfrak{a},\mathfrak{c}) \quad \quad\textup{ for all }\mathfrak{a}\in \textup{Sel}^2(A/K),\]  
and 
\[2P(\mathfrak{c},\mathfrak{c})\equiv  \dim \textup{Sel}^2(A/K)+\textup{rk}_2(A/K)\quad (\textup{mod } 2\mathbb{Z}),\]
where $\textup{rk}_2(A/K)$ denotes the $2^\infty$-Selmer rank of $A$. 
If the class $\mathfrak{c}$ is zero, this amounts to saying that $P$ is alternating and that the $2^\infty$-Selmer rank of $A$ is congruent modulo $2$ to the dimension of its $2$-Selmer group. 

It follows from work of Poonen--Stoll \cite[Theorem 8]{MR1740984} that the Cassels--Tate pairing on $\textup{Sel}^2(A/K)$ is admissible (cf. \Cref{ctp_is_admissible}).   We then prove:

\begin{theorem}[=\Cref{key_cassels_tate_prop_4_sel}] \label{key_cassels_tate_prop_4_sel_intro}
Suppose that $\textup{Sel}^2(A/K)\cap H^1(G,A[2])$ is generated by the class $\mathfrak{c}$.
Let $P$ be an admissible pairing on $\textup{Sel}^2(A/K)$. Then there is a quadratic character $\chi:G_K\rightarrow \mu_2$ such that all of the following hold:
\begin{itemize}
\item[(i)] $\textup{rk}_2(A^{\chi}/K)\equiv \textup{rk}_2(A/K) \pmod 2,$
\item[(ii)] $\textup{Sel}^2(A^{\chi}/K)=\textup{Sel}^2(A/K)$ as subgroups of $H^1(K,A[2])$,
\item[(iii)] for all $\mathfrak{a},\mathfrak{b}\in \textup{Sel}^2(A^{\chi}/K)$, we have 
\[\textup{CTP}_{\chi}(\mathfrak{a},\mathfrak{b})=P(\mathfrak{a},\mathfrak{b}).\]
\end{itemize}
Here $\textup{CTP}_\chi$ denotes the Cassels--Tate pairing on the $2$-Selmer group of $A^\chi$.
\end{theorem}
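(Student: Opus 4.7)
The strategy is to realize $\chi$ as a product $\chi=\prod_{\mathfrak{q}\in S}\chi_{\mathfrak{q}}$ of quadratic characters ramified at a carefully chosen finite set $S$ of what I shall call \emph{admissible} primes of $K$: odd primes of good reduction for $A$, disjoint from the ramification of the relevant global cocycles and from $\mathfrak{p}_0$, whose Frobenius in $G=\textup{Gal}(L/K)$ is conjugate to the element $g$ supplied by \Cref{existence_of_frob_elements_prop_1}(B). The crucial consequence of the last condition, via inflation--restriction, is that the unramified subspace $H^1_{\textup{ur}}(K_{\mathfrak{q}},A[2])\cong A[2]^g$ vanishes; the local cohomology at $\mathfrak{q}$ is therefore purely ramified of dimension $2$, and carries a non-degenerate alternating form via local Tate duality. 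Admissible primes exist in abundance by Chebotarev's density theorem, which is where part (B) of \Cref{existence_of_frob_elements_prop_1} is used decisively.

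First I would establish the Selmer group identity $\textup{Sel}^2(A^\chi/K)=\textup{Sel}^2(A/K)$. Since $\chi$ is unramified outside $S$, twisting leaves the local conditions unchanged at every place not in $S$. At an admissible prime $\mathfrak{q}\in S$, the local conditions for $A$ and for $A^\chi$ are two distinct Lagrangian subspaces of $H^1(K_{\mathfrak{q}},A[2])$, so a class lying in both necessarily restricts trivially at $\mathfrak{q}$. Consequently, any class in $\textup{Sel}^2(A/K)\cap \textup{Sel}^2(A^\chi/K)$ has trivial restriction at each $\mathfrak{q}\in S$ and is unramified elsewhere; combining this with the hypothesis $\textup{Sel}^2(A/K)\cap H^1(G,A[2])=\langle\mathfrak{c}\rangle$ and the fact (recalled in \Cref{sec:adaptations}) that $\mathfrak{c}$ lies in every twisted Selmer group, a dimension count via Poitou--Tate forces equality of the two Selmer groups.

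The technical heart of the argument is controlling the Cassels--Tate pairing. Using the standard comparison formula for the CTP under change of local conditions, in the spirit of \cite{MR1740984} and the second-descent analysis of \cite{Har19}, the difference $\textup{CTP}_{\chi}-\textup{CTP}$ is expressible as a sum over $\mathfrak{q}\in S$ of local symbols of the form $\langle \textup{res}_{\mathfrak{q}}\mathfrak{a},\,\psi_{\mathfrak{q}}(\textup{res}_{\mathfrak{q}}\mathfrak{b})\rangle_{\mathfrak{q}}$, where $\psi_{\mathfrak{q}}$ is an explicit $\mathbb{F}_2$-linear isomorphism between the two Lagrangians at $\mathfrak{q}$ determined by $\chi_{\mathfrak{q}}$. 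I would then show that, as $S$ and the Frobenius classes within $S$ are varied, the resulting perturbations sweep out the full space of admissible differences. This reduces to proving that the combined restriction map $\textup{Sel}^2(A/K)\to \bigoplus_{\mathfrak{q}\in S}H^1(K_{\mathfrak{q}},A[2])$ has sufficiently rich image: once the target is cut down by the constraints forced by admissibility, the simplicity of $A[2]$ as an $\mathbb{F}_2[G]$-module and the equality $\textup{End}_G(A[2])=\mathbb{F}_2$ from part (A) of \Cref{existence_of_frob_elements_prop_1} enable a Chebotarev-plus-linear-algebra argument to realize every admissible pattern of local restrictions.

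Finally, to secure condition (i) on the parity of $\textup{rk}_2(A^{\chi}/K)$, I would impose one further local constraint at the auxiliary prime $\mathfrak{p}_0$ of \Cref{assumption:local_solubility} and invoke the parity-of-$2^\infty$-Selmer-rank results generalising \cite{MR3951582} alluded to in \Cref{sec:adaptations}; provided this constraint is independent of those arising in the previous step, all three conclusions can be met simultaneously. The principal obstacle is the surjectivity claim for the local-symbol map in the third paragraph: pinning down exactly which bilinear pairings arise as CTP-differences, and proving that no further constraints beyond the two admissibility conditions on $P$ are forced, requires both a clean local formula for the contribution from each admissible twist and a careful rank computation that exploits parts (A) and (B) of \Cref{existence_of_frob_elements_prop_1} in an essential way.
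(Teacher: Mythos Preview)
Your proposal rests on a mistaken local computation. You assert that when $A[2]^{\textup{Frob}_{\mathfrak{q}}}=0$ the group $H^1(K_{\mathfrak{q}},A[2])$ is ``purely ramified of dimension $2$''. In fact, by \Cref{selmer_conds_lemma}(i) one has $\dim H^1(K_{\mathfrak{q}},A[2])=2\dim A(K_{\mathfrak{q}})[2]=0$ at such primes: the local cohomology vanishes entirely. There are no nontrivial Lagrangians to compare, and every restriction $\textup{res}_{\mathfrak{q}}\mathfrak{a}$ is zero. Your proposed expression for $\textup{CTP}_{\chi}-\textup{CTP}$ as a sum of local Tate symbols $\langle\textup{res}_{\mathfrak{q}}\mathfrak{a},\psi_{\mathfrak{q}}(\textup{res}_{\mathfrak{q}}\mathfrak{b})\rangle_{\mathfrak{q}}$ therefore vanishes identically and cannot perturb the pairing at all. (You may be thinking of the Mazur--Rubin style arguments using primes with $\dim A[2]^g=1$, where local cohomology is $2$-dimensional; but the whole point here, cf.\ \Cref{rem_g_1_fix}, is that no such $g$ need exist.)

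This is exactly why the paper's mechanism is different. The comparison formula \Cref{main_ct_comp_prop} expresses $\textup{CTP}+\textup{CTP}_\chi$ not via local pairings of restrictions but via the values $\psi_{a,b}(\textup{Frob}_v)$ of \Cref{psi_b_b_defi}, which depend on a \emph{global} choice of trivialising cochain $\gamma_{a,b}$ with $d\gamma_{a,b}=a\cup b$ and can be nonzero even though $\textup{res}_v(\mathfrak{a})=\textup{res}_v(\mathfrak{b})=0$. The core of the argument is then the group-cohomological work of \Cref{big_torsors_section}: \Cref{independent_cups_prop} shows the cup products $\mathfrak{f}_i\cup\mathfrak{f}_j$ are linearly independent in $H^2(A[2]^n\rtimes G,\mu_2)$, whence \Cref{determinantioN_of_cup_trivi_ext} shows the extension $K_{T,T'}$ cut out by the $\gamma_s$ is as large as possible, so that by \Cref{existence_of_frob_elements_prop} the $\psi_s(\textup{Frob}_v)$ are independently tunable via Chebotarev. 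One prime $\mathfrak{p}$ then carries the entire desired perturbation, with the remaining ramification of $\chi$ (needed to satisfy reciprocity in \Cref{approximating_extensions_lemma}) placed at primes contributing zero. Finally, your appeal to $\mathfrak{p}_0$ for parity is misplaced in this theorem: since $\chi$ is arranged to be trivial at every place dividing $2N_A\infty$, condition (i) is automatic from \Cref{selmer_agreement_lemma}.
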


\begin{remark}
As above, $\mathfrak{c}$ is necessarily contained in $\textup{Sel}^2(A/K)\cap H^1(G,A[2])$ and if $G\cong \textup{Sp}_{2d}(\mathbb{F}_2)$ then $H^1(G,A[2])$ itself is generated by the class $\mathfrak{c}$.  Thus in this case the condition on $\textup{Sel}^2(A/K)\cap H^1(G,A[2])$ is automatically satisfied.
\end{remark}

\begin{remark}
\Cref{key_cassels_tate_prop_4_sel_intro} is perhaps not surprising in light of recent breakthrough work of Smith \cite{smith2022distribution1,smith2022distribution2} which, amongst other things, establishes strong randomness results for Cassels--Tate pairings in twist families (earlier work of Smith \cite{smi16} also contains a variant of \Cref{key_cassels_tate_prop_4_sel} applying to elliptic curves with full rational $2$-torsion). We remark however that the possible failure of the Cassels--Tate pairing to be alternating causes issues there also, as a result of which Smith's work currently applies only to settings where the class $\mathfrak{c}$ vanishes. The goal of understanding the behaviour of the Cassels--Tate pairing in twist families for which the class  $\mathfrak{c}$ is non-trivial was a key motivator for carrying out the current project. 
\end{remark}

%
%

\subsection{Notation and conventions}

For a field $F$,  we denote by $\overline{F}$ a fixed separable closure of $F$, and denote by $G_F$ the absolute Galois group of $F$. For an integer $n$ coprime to the characteristic of $F$, we denote by $\mu_n$ the $G_F$-module of $n$-th roots of unity in $\overline{F}$.
For a profinite group $\Gamma$,  a discrete $\Gamma$-module $M$, and $i\geq 0$, we denote by 
$C^i(\Gamma,M)$, $Z^i(\Gamma,M)$,  and $B^i(\Gamma,M)$
the groups of continuous $i$-cochains, $i$-cocycles and $i$-coboundaries respectively. We denote by 
\[d:C^i(\Gamma,M)\longrightarrow C^{i+1}(\Gamma,M)\]
the coboundary map, and denote by $H^i(\Gamma,M)=Z^i(\Gamma,M)/B^i(\Gamma,M)$ the $i$-th cohomology group of $\Gamma$ with coefficients in $M$. Given a field $F$ and a discrete $G_F$-module $M$, we write $H^i(F,M)$ for the corresponding Galois cohomology group, i.e. as shorthand for $H^i(G_F,M)$.

For a number field $K$,  and a place $v$ of $K$, we denote by $K_v$ the completion of $K$ at $v$. We denote by  $\textup{inv}_v$ the local invariant map, and when $v$ is nonarchimedean denote by $k_v$ the residue field of $K_v$.    For each place $v$ we fix an embedding $i_v:\overline{K}\hookrightarrow \overline{K_v}$. This induces an inclusion $G_{K_v}\hookrightarrow G_K$, and a corresponding restriction map $\textup{res}_v$ on cochains, cocycles and  Galois cohomology groups. Further, $i_v$ fixes an extension of $v$ to $\overline{K}$; in this way, for each finite Galois extension $F/K$ unramified at $v$, we have a well defined Frobenius element $\textup{Frob}_v$ in $\textup{Gal}(F/K)$.

For each nonarchimedean place $v$ of $K$, we denote by $K_v^{\textup{ur}}$ the maximal unramified extension of $K_v$. Given a $G_{K_v}$-module $M$, we denote by $H^1_{\textup{ur}}(K_v,M) \subseteq H^1(K_v,M)$ the subgroup of unramified   classes, so that  $H^1_{\textup{ur}}(K_v,M)$ is isomorphic  to  $H^1\big(\textup{Gal}(K^{\textup{ur}}_v/K_v),M^{I_v}\big)$. Here $I_v$ denotes the inertia subgroup of $G_{K_v}$. Evaluation of cocyles at the Frobenius element $\textup{Frob}_v\in \textup{Gal}(K^{\textup{ur}}_v/K_v)$ induces an isomorphism
\begin{equation} \label{ev_at_frob_unram_iso}
 H^1\big(\textup{Gal}(K^{\textup{ur}}_v/K_v),M^{I_v}\big)\stackrel{\sim}{\longrightarrow}M^{I_v}/(\textup{Frob}_v-1)M^{I_v}.
\end{equation}

\subsection{Acknowledgements}
We would like to thank Dami\'{a}n Gvirtz-Chen for helpful comments and for sharing  \textsc{magma}  code used to determine the subgroups of $S_6$ listed in \Cref{rem:dim_2_subgroups}. We would also like to thank Alexei Skorobogatov for encourgaing us to look at this problem and for pointing out helpful references. We thank Netan Dogra for a helpful discussion.

This work has been supported by the Engineering and Physical Sciences Research Council (EPSRC) grant EP/V006541/1 ‘Selmer groups, Arithmetic Statistics and Parity Conjectures'.

\section{Setup and preliminary results}
For the rest of the paper, we take $A$ to be an abelian variety defined over a number field $K$. Denote by $A^\vee$ the dual abelian variety, and suppose that $A$ is equipped with a fixed principal polarisation $\lambda:A\rightarrow A^\vee$ defined over $K$. 

\subsection{Pairings associated to $\lambda$} 
Suppressing the polarisation $\lambda$ from the notation, denote by 
\begin{equation} \label{weil_pairing_eq}
e_2:A[2]\times A[2]\longrightarrow \mu_2
\end{equation}
the Weil pairing on $A[2]$ associated to $\lambda$, so that $e_2$ is the composition of   the map $1\times \lambda$ with the usual Weil pairing  $A[2]\times A^\vee[2]\rightarrow \mu_2$. Similarly, writing $\Sha(A/K)$ for the Shafarevich--Tate group of $A$, we denote by 
\begin{equation} \label{ctp_pairing}
\textup{CTP}:\Sha(A/K)\times \Sha(A/K)\longrightarrow \mathbb{Q}/\mathbb{Z}
\end{equation}
the Cassels--Tate pairing on $\Sha(A/K)$ associated to $\lambda$, so that \eqref{ctp_pairing} is the pairing denoted $\left \langle~,~\right \rangle_\lambda$ in   \cite{MR1740984}. By an abuse of notation, we also denote by $\textup{CTP}$ the pairing 
\begin{equation}
\textup{Sel}^2(A/K)\times \textup{Sel}^2(A/K)\longrightarrow \tfrac{1}{2}\mathbb{Z}/\mathbb{Z}
\end{equation}
on the $2$-Selmer group of $A$, given by projecting to $\Sha(A/K)[2]$ and  applying the pairing  \eqref{ctp_pairing}.

\subsection{The class $\mathfrak{c}$} \label{ssec:class_c} 

The pairing \eqref{weil_pairing_eq} is non-degenerate, alternating, and $G_K$-equivariant. In particular, $G_K$ acts on $A[2]$ through the symplectic group $\textup{Sp}(A[2])$ of automorphisms  preserving the Weil pairing. In this way, we   identify $G=\textup{Gal}(K(A[2])/K)$ with a subgroup of $\textup{Sp}(A[2])$.

Let $\mathfrak{c}_\lambda \in H^1(K,A^\vee[2])$ be the torsor defined in \cite[Equation (17)]{MR2915483}, so that $\mathfrak{c}_\lambda$ parameterises quadratic refinements of the Weil pairing  on $A[2]$ (equivalently, symmetric line bundles on $A$ inducing the given principal polarisation). Write $\mathfrak{c}=\lambda^{-1}(\mathfrak{c}_\lambda)$ for the corresponding class in  $H^1(K,A[2])$. Per \cite[Remark 3.3]{MR2915483}, the class $\mathfrak{c}$ lies in $\textup{Sel}^2(A/K)$. Further, combining that remark with \cite[Theorem 5]{MR1740984}, we see that for all $\mathfrak{a}\in \Sha(A/K)$, we have
\begin{equation}\textup{CTP}(\mathfrak{a},\mathfrak{a})=\textup{CTP}(\mathfrak{a},\mathfrak{c}).
\end{equation}
Via the description of $\mathfrak{c}$ in terms of quadratic refinements of the Weil pairing, we see that $\mathfrak{c}$ lies in the image of inflation from $H^1(G,A[2])$. In what follows, we will often view $\mathfrak{c}$ as an element of $H^1(G,A[2])$ without further comment. The proof of \cite[Proposition 3.6 (a)]{MR2915483} shows that $\mathfrak{c}$ in fact lies in the group 
\[\bigcap_{g\in G}\ker\Big(H^1(G,A[2])\stackrel{\textup{res}}{\longrightarrow}  H^1(\left \langle g\right \rangle, A[2]) \Big) ,\]
where, for $g\in G$, we denote by $\left \langle g \right \rangle$ the subgroup  generated by $g$.

\subsection{Quadratic twists}

For $\chi:G_K\rightarrow \mu_2$ a (possibly trivial) quadratic character, we denote by $A^\chi/K$ the quadratic twist of $A$ by $\chi$. This is an abelian variety over $K$, equipped with a $\overline{K}$-isomorphism $\psi:A\stackrel{\sim}{\longrightarrow}A^\chi$ satisfying 
$\psi^{-1}\circ {}^\sigma \psi=\chi(\sigma)$
for al $\sigma \in G_K$. 
 Note that $\psi$ restricts to a $G_K$-module isomorphism
 \begin{equation}  \label{2_tors_ident}
 A[2]\stackrel{\sim}{\longrightarrow}A^\chi[2].
 \end{equation}
In what follows, we will always identify $A^\chi[2]$ with $A[2]$ via this map. 

Per \cite[Lemma 4.16]{MR3951582}, the polarisation $\lambda$ on $A$ descends via $\psi$ to a principal polarisation on $A^\chi$ defined over $K$.  In this way we view $A^\chi$ as a principally polarised abelian variety also. As explained in e.g. \cite[Section 5.5]{MR3951582}, the isomorphism \eqref{2_tors_ident} identifies the Weil pairings. In particular, it identifies $\mathfrak{c}$ with the torsor constructed from quadratic refinements of the Weil pairing on $A^\chi[2]$. Consequently, when we view $\textup{Sel}^2(A^\chi/K)$ inside $H^1(K,A[2])$ via \eqref{2_tors_ident}, as we do henceforth, we have $\mathfrak{c}\in \textup{Sel}^2(A^\chi/K)$ also.

\subsection{Tate local duality} \label{local_tate_pairing_identified_rmk} 
For each place $v$ of $K$, Tate local duality gives a non-degenerate pairing 
\begin{equation} \label{eq:local_tate}
H^1(K_v,A[2])\otimes H^1(K_v,A[2])\stackrel{\cup}{\longrightarrow}H^2(K_v,\mu_2)\hookrightarrow \mathbb{Q}/\mathbb{Z},
\end{equation}
where the first map is the cup-product associated to the Weil pairing and the second is the local invariant map. We refer to this as the local Tate pairing. We denote by $\mathscr{S}_v(A)$ the image of 
  the connecting map
\[\delta:A(K_v)/2A(K_v)\hookrightarrow H^1(K_v,A[2]),\]
 associated to the multiplication-by-$2$ Kummer sequence for $A$.  Per \cite[Proposition 4.10]{MR2833483}, for each place $v$ of $K$, the subgroup $\mathscr{S}_v(A)$ is a maximal isotropic subspace of $H^1(K_v,A[2])$ with respect to the local Tate pairing.

\subsection{$2$-Selmer conditions} \label{selmer_conds_intro_notat}

Given a place $v$ of $K$, and a quadratic character $\chi:G_{K_v}\rightarrow \mu_2$, we denote by $\mathscr{S}_v(A,\chi)$ the subspace of $H^1(K_v,A[2])$ corresponding, under \eqref{2_tors_ident}, to the Kummer image $\mathscr{S}_v(A^\chi)$. Since \eqref{2_tors_ident} identifies the Weil pairings on $A[2]$ and $A^{\chi}[2]$, for each place $v$  the induced isomorphism $H^1(K_v,A[2])\cong H^1(K_v,A^{\chi}[2])$ identifies the local Tate pairings. In particular, each $\mathscr{S}_v(A,\chi)$ is a maximal isotropic subspace of $H^1(K_v,A[2])$ with respect to \eqref{eq:local_tate} also.

 In the statement of the following lemma, $N_A$ denotes the conductor of $A$. Further, for a place $v$ of $K$, we denote by $\Phi(A/K_v)$ the group of $\overline{k}_v$-points of the component group of the N\'{e}ron model of $A$, viewed as a $\textup{Gal}(K^{\textup{ur}}_v/K_v)$-module.


%
%
%

\begin{lemma} \label{selmer_conds_lemma}
Let $v$ be a place of $K$, and let $\chi:G_{K_v}\rightarrow \mu_2$ be a (possibly trivial) quadratic character. Then for each place $v$ of $K$ not dividing $2\infty$, we have:
\begin{itemize}
\item[(i)]   $\dim H^1(K_v,A[2])=2\dim A(K_v)[2]$,
\item[(ii)] if  $\chi$ is unramified and  $\Phi(A/K_v)$ has odd order (in particular if $v\nmid N_A$), then 
\[\mathscr{S}_v(A,\chi)=H^1_{\textup{ur}}(K_v,A[2]),\]
\item[(iii)] if $v\nmid N_A$ and $\chi$ is ramified then 
\[\mathscr{S}_v(A,\chi)\cap H^1_{\textup{ur}}(K_v,A[2])=0.\]
\end{itemize}
\end{lemma}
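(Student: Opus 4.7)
The plan is to handle the three parts separately, with parts (i) and (ii) amounting to bookkeeping with standard tools and part (iii) requiring the most work.

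For part (i), I would apply Tate's local Euler--Poincar\'e characteristic formula to the finite Galois module $M=A[2]$. Since $v\nmid 2\infty$, one has $|\#M|_v=1$, so the formula gives $|H^0(K_v,M)|\cdot |H^2(K_v,M)|=|H^1(K_v,M)|$. The Weil pairing \eqref{weil_pairing_eq} is a $G_{K_v}$-equivariant self-duality $A[2]\simeq\textup{Hom}(A[2],\mu_2)$, so local Tate duality identifies $|H^2(K_v,A[2])|=|H^0(K_v,A[2])|=|A(K_v)[2]|$. The dimension formula in (i) follows.

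For part (ii), the first observation is that twisting by a character unramified at $v$ preserves the N\'eron component group, so $|\Phi(A^\chi/K_v)|=|\Phi(A/K_v)|$ is odd. The claim then reduces to the standard fact that for any abelian variety $B/K_v$ with $v\nmid 2\infty$ and $|\Phi(B/K_v)|$ odd, one has $\mathscr{S}_v(B)=H^1_{\textup{ur}}(K_v,B[2])$: oddness of $|\Phi|$ reduces the Kummer sequence to the connected component, whose $H^1_{\textup{ur}}$ vanishes by Lang's theorem on the special fibre plus smoothness lifting. Applied to $B=A^\chi$ and transported through \eqref{2_tors_ident}, which preserves local data as recorded in \S\ref{selmer_conds_intro_notat}, this gives (ii).

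For part (iii), combining (i) with \eqref{ev_at_frob_unram_iso} yields
\[\dim\mathscr{S}_v(A,\chi)\ =\ \dim H^1_{\textup{ur}}(K_v,A[2])\ =\ \tfrac12\dim H^1(K_v,A[2]),\]
so it suffices to show the intersection is trivial by a direct cocycle analysis. Represent $\xi\in\mathscr{S}_v(A,\chi)$ by
\[\xi(\sigma)\ =\ \sigma\tilde P-\chi(\sigma)\tilde P,\qquad \tilde P\in A(\overline{K_v}),\ \ 2\tilde P\in A^\chi(K_v),\]
where we use \eqref{2_tors_ident} throughout. Good reduction at $v$ implies that $I_v$ acts trivially on $A[2]$, so $\xi\in H^1_{\textup{ur}}$ forces $\xi|_{I_v}=0$ on representatives (and independently of the choice of $\tilde P$, since different choices differ by a coboundary). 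Picking $\sigma_0\in I_v$ with $\chi(\sigma_0)=-1$, available because $\chi$ is ramified, we read off $\sigma_0\tilde P=-\tilde P$, and $\sigma\tilde P=\tilde P$ for $\sigma\in I_v\cap\ker\chi$.

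The main obstacle, and the technical heart of (iii), is upgrading these constraints to the triviality of $\xi$. Since $\sigma_0\in I_v$ acts trivially on $\overline{k_v}$, the relation $\sigma_0\tilde P=-\tilde P$ descends to $2\overline{\tilde P}=0$ on the special fibre of the N\'eron model (which is an abelian scheme, by good reduction). Lifting this $2$-torsion via Hensel's lemma to $T\in A(K_v^{\textup{ur}})[2]$, write $\tilde P=T+Q$ with $Q$ in the formal group $\hat{\mathcal A}$ over a suitable extension. The residue characteristic is odd, so $\hat{\mathcal A}$ is a pro-$p$-group with $p\neq 2$, hence $2$-torsion-free; in particular $\hat{\mathcal A}\cap A[2]=\{0\}$. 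Decomposing
\[\xi(\textup{Frob}_v)\ =\ \bigl(\textup{Frob}_v\,T-\chi(\textup{Frob}_v)T\bigr)\ +\ \bigl(\textup{Frob}_v\,Q-\chi(\textup{Frob}_v)Q\bigr)\]
into its torsion and formal-group contributions, and noting that $\xi(\textup{Frob}_v)\in A[2]$ while the second summand lies in $\hat{\mathcal A}$, the second summand must vanish, giving $\textup{Frob}_v\,Q=\chi(\textup{Frob}_v)Q$. Combined with the analogous relations on $I_v$, this yields $\sigma Q=\chi(\sigma)Q$ for all $\sigma\in G_{K_v}$, i.e.\ $Q\in A^\chi(K_v)$; since $2Q=2\tilde P$, the Kummer class $\xi$ is trivial, completing (iii).
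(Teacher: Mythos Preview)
Your argument is correct. Parts (i) and (ii) are routine and match the paper's treatment up to cosmetic differences: for (i) you invoke the local Euler--Poincar\'e formula and self-duality of $A[2]$, whereas the paper instead uses that $\mathscr{S}_v(A)$ is maximal isotropic of dimension $\dim A(K_v)[2]$; for (ii) both arguments reduce to the untwisted case via unramified base change of N\'eron models and then kill $H^1_{\textup{ur}}(K_v,A)[2]$ using oddness of the component group.

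The genuine divergence is in (iii). The paper dispatches this in one line by combining (ii) with \cite[Lemma~4.3]{HS16}, which gives $\mathscr{S}_v(A,\chi)\cap\mathscr{S}_v(A)=0$ for ramified $\chi$ at good primes. You instead give a self-contained cocycle argument: force $\overline{\tilde P}$ into the $2$-torsion of the special fibre via the inertial relation $\sigma_0\tilde P=-\tilde P$, strip off a lifted $2$-torsion point $T$, and use that the formal group at odd residue characteristic has no $2$-torsion to show the remainder $Q$ lies in $A^\chi(K_v)$. Your route is longer but fully explicit and avoids the external citation; the paper's route is cleaner but pushes the content into a black box. One small point worth making precise in your write-up: the relevant formal group is over $\mathcal{O}_{K_v^{\textup{ur}}E_\chi}$ (since $\tilde P$ is only known to live over $K_v^{\textup{ur}}E_\chi$), and the ``pro-$p$'' claim should be phrased as ``multiplication by $2$ is an isomorphism on the formal group since $p\neq 2$'', which is all you use.
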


\begin{proof}
(i). Since $v\nmid 2\infty$, we have 
\[\dim \mathscr{S}_v(A)=\dim A(K_v)/2A(K_v)=\dim A(K_v)[2]. \]
Since $\mathscr{S}_v(A)$ is  maximal isotropic for the local Tate pairing,  we have $\dim H^1(K_v,A[2])=2\dim \mathscr{S}_v(A).$

(ii).  Since N\'{e}ron models commute with unramified base change, $\Phi(A^\chi/K_v)$ has odd order also. Thus we can assume that $\chi$ is trivial. From \eqref{ev_at_frob_unram_iso}   we have
\[\dim H^1_{\textup{ur}}(K_v,A[2])=\dim A(K_v)[2]=\dim \mathscr{S}_v(A),\]
hence it suffices to show that $H^1_{\textup{ur}}(K_v,A[2])$ is contained in $\mathscr{S}_v(A)$. Equivalently, we want to show that the image of $H^1_{\textup{ur}}(K_v,A[2])$ in $H^1(K_v,A)$ is trivial. 
By  \cite[Proposition I.3.8]{MR2261462} we have \[H^1_{\textup{ur}}(K_{v},A)\cong H^1_{\textup{ur}}(K_{v},\Phi(A/K_v)).\]
Since $\Phi(A/K_v)$ has odd order, it follows that the group $H^1_{\textup{ur}}(K_{v},A)[2]$ is trivial. Thus the image of $H^1_{\textup{ur}}(K_v,A[2])$ in $H^1(K_v,A)$ is trivial also. 

(iii). Combine part (ii) with \cite[Lemma 4.3]{HS16}.
\end{proof}
%

 The following lemma, based on \cite[Corollary 3.4(ii)]{MazRub2010}, can be used to produce lots of twists with the same $2$-Selmer group. In the statement, $\textup{rk}_2(A/K)$ denotes the $2^\infty$-Selmer rank of $A$.

\begin{lemma} \label{selmer_agreement_lemma}
Let $\Sigma$ be a finite set of places of $K$ containing all places dividing $2N_A\infty$. Suppose that $\chi:G_K\rightarrow \mu_2$ is a quadratic character such that:
\begin{itemize}
\item $\textup{res}_v(\chi)$ is trivial  for all $v\in \Sigma$,
\item if   $\chi$ is ramified at a place $v \notin \Sigma$, then  $A[2]^{\textup{Frob}_v}=0$. 
\end{itemize}
Then   $\textup{Sel}^2(A^{\chi}/K)=\textup{Sel}^2(A/K)$ as subgroups of $H^1(K,A[2])$. Moreover, we have 
\[\textup{rk}_2(A^{\chi}/K)\equiv \textup{rk}_2(A/K) \pmod 2.\]
\end{lemma}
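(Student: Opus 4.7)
The statement has two independent parts: the equality of $2$-Selmer groups inside $H^1(K,A[2])$, and the mod-$2$ comparison of $2^\infty$-Selmer ranks. I would handle them separately, with the equality of Selmer groups being essentially a local verification, and the parity statement being an application of the parity machinery from \cite{MR3951582}.

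For the equality $\textup{Sel}^2(A^\chi/K)=\textup{Sel}^2(A/K)$, it suffices to show that $\mathscr{S}_v(A,\chi)=\mathscr{S}_v(A)$ as subgroups of $H^1(K_v,A[2])$ for every place $v$ of $K$. This I would split into three cases using \Cref{selmer_conds_lemma}. If $v\in\Sigma$ then $\textup{res}_v(\chi)$ is trivial by hypothesis, so the twisting isomorphism $\psi$ is defined over $K_v$, identifies $A^\chi$ with $A$ over $K_v$, and hence identifies the Kummer images. If $v\notin\Sigma$ and $\chi$ is unramified at $v$, then $A^\chi$ also has good reduction at $v$ (so that in particular $\Phi(A^\chi/K_v)=0$), and \Cref{selmer_conds_lemma}(ii) applied to both $A$ and $A^\chi$ shows $\mathscr{S}_v(A,\chi)=H^1_{\textup{ur}}(K_v,A[2])=\mathscr{S}_v(A)$. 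The interesting case is $v\notin\Sigma$ with $\chi$ ramified at $v$: here the hypothesis forces $A[2]^{\textup{Frob}_v}=0$, so that $A(K_v)[2]=0$ since $A[2]$ is unramified at $v$, and \Cref{selmer_conds_lemma}(i) gives $H^1(K_v,A[2])=0$. Both local subspaces are then trivially equal.

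For the parity statement, the equality of $2$-Selmer groups gives $\dim_{\F_2}\textup{Sel}^2(A^\chi/K)=\dim_{\F_2}\textup{Sel}^2(A/K)$, so the content reduces to showing that the discrepancies
\[
\dim_{\F_2}\textup{Sel}^2(A/K)-\textup{rk}_2(A/K)\quad\text{and}\quad \dim_{\F_2}\textup{Sel}^2(A^\chi/K)-\textup{rk}_2(A^\chi/K)
\]
have the same parity. This is precisely the kind of local comparison established in the author's previous work \cite{MR3951582}: the mod-$2$ difference is a sum of local terms indexed by the places of $K$, each term depending only on local data at $v$ (the local twisting character and the local Galois module $A[2]$ with its Kummer subspace). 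At places $v\in\Sigma$ the local character is trivial and the corresponding term vanishes tautologically. At places $v\notin\Sigma$ where $\chi$ is unramified, Part 1 shows that the local Kummer conditions agree, and the local contribution is again zero. At ramified $v\notin\Sigma$ the hypothesis $A[2]^{\textup{Frob}_v}=0$ kills the local factor, as the ambient group $H^1(K_v,A[2])$ itself is trivial.

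The only subtle step is the last one: identifying the correct parity formula from \cite{MR3951582} and verifying that its local contributions vanish under the stated hypotheses. The first part, by contrast, is a purely mechanical case analysis driven by \Cref{selmer_conds_lemma}.
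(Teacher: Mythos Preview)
Your treatment of the Selmer equality is exactly the paper's proof: show $\mathscr{S}_v(A)=\mathscr{S}_v(A,\textup{res}_v(\chi))$ for every $v$ by the same three-way case split, invoking \Cref{selmer_conds_lemma}(i) at ramified places outside $\Sigma$ and \Cref{selmer_conds_lemma}(ii) at unramified places outside $\Sigma$.

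For the parity statement your idea is right but the scaffolding is heavier than needed. The detour through the discrepancies $\dim\textup{Sel}^2-\textup{rk}_2$ buys nothing: once the Selmer dimensions agree, comparing discrepancies is literally the same as comparing $\textup{rk}_2$ directly. The paper just cites \cite[Theorem 10.20]{MR3951582} (equivalently \Cref{parity_2_infinity}), which says that $(-1)^{\textup{rk}_2(A^\chi/K)}/(-1)^{\textup{rk}_2(A/K)}$ is a product of local invariants $\Omega_v(\textup{res}_v(\chi))$ indexed \emph{only} by $v\mid 2N_A\infty$, and each such $v$ lies in $\Sigma$ where $\textup{res}_v(\chi)$ is trivial, so $\Omega_v=1$. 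Your discussion of local contributions at places $v\notin\Sigma$ is therefore unnecessary: there are no such terms in the formula. The argument is one line once you quote the right result.
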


\begin{proof}
For the equality of $2$-Selmer groups we will show that $\mathscr{S}_v(A)=\mathscr{S}_v(A,\textup{res}_v(\chi))$ for all places $v$. For places $v\in \Sigma$ this holds since $\textup{res}_v(\chi)$ is trivial. For places $v\notin \Sigma$ where $\chi$ ramifies, we have $A(K_v)[2]=0$, hence $H^1(K_v,A[2])=0$ by \Cref{selmer_conds_lemma}(i).  At the remaining places the character $\chi$ is unramified, and we conclude from  \Cref{selmer_conds_lemma}(ii).

 The claim regarding the parity of $2^\infty$-Selmer ranks follows from  \cite[Theorem 10.20]{MR3951582}, which shows that the parity of $\textup{rk}_2(A^\chi/K)$ depends only on the restriction of $\chi$ to $G_{K_v}$ for places $v$ dividing $2N_A\infty$ (cf. also \Cref{parity_2_infinity} below).
\end{proof}

\section{The behaviour of the Cassels--Tate pairing under quadratic twist}
 
Let $\chi$ be a quadratic character. Viewing both $\textup{Sel}^2(A/K)$ and $\textup{Sel}^2(A^\chi/K)$ as subgroups of $H^1(K,A[2])$, the sum of the Cassels--Tate pairings for $A$ and $A^\chi$  gives a pairing on the intersection of $\textup{Sel}^2(A/K)$ and $ \textup{Sel}^2(A^{\chi}/K)$. As observed by several authors  \cite{smi16, Har19, MR3951582}, this pairing is significantly simpler than either of the individual Cassels--Tate pairings. We give a description of this pairing  in a special case, based  on \cite[Section 8.2]{MS21}. In that work, denoting by $F$ the quadratic extension corresponding to $\chi$, and denoting by $\textup{R}_{F/K}$ restriction of scalars from $F$ to $K$, the simplification is obtained by exploiting  a certain isogeny 
\[\textup{R}_{F/K}A\longrightarrow A\times A^\chi\] splitting multiplication by $2$.  

\subsection{The Cassels--Tate pairing on the $2$-Selmer group under quadratic twist}

As in \eqref{ctp_pairing}, denote by $\textup{CTP}$ the Cassels--Tate pairing on $\textup{Sel}^2(A/K)$. For a quadratic character $\chi:G_K\rightarrow \mu_2$,  
denote by $\textup{CTP}_\chi$ the corresponding pairing on $\textup{Sel}^2(A^\chi/K)$.  

\begin{defi} \label{psi_b_b_defi}
Let $S$ denote the subset of elements $\sigma \in G_K$  for which $A[2]^\sigma=0$. For any $\sigma \in S$ we have $A[2]/(\sigma-1)A[2]=0$, so given a $1$-cocycle $a:G_K\rightarrow A[2]$, there is a, necessarily unique, element $P_{a,\sigma}\in A[2]$ satisfying 
\[a(\sigma)=\sigma P_{a,\sigma}-P_{a,\sigma}.\]
Note that if $v\nmid 2N_A\infty$ is a place of $K$ such that $a$ is unramified at $v$, then $\textup{res}_v(a)=dP_{a,\textup{Frob}_v}$. 

Suppose we are also given a $1$-cocycle $b:G_K\rightarrow A[2]$ such that the class of $a\cup b=0$ is zero in $H^2(G_K,\mu_2)$, and let $\gamma_{a,b}:G_K\rightarrow \mu_2$ be a $1$-cochain such that $d\gamma_{a,b}=a\cup b$.  Identifying $\tfrac{1}{2}\mathbb{Z}/\mathbb{Z}$ with $\mu_2$, and writing both groups additively, define  $\psi_{a,b}:S\rightarrow \tfrac{1}{2}\mathbb{Z}/\mathbb{Z}$ to be the function sending $\sigma\in S$ to 
\[\psi_{a,b}(\sigma):=(P_{a,\sigma}\cup b)(\sigma)+\gamma_{a,b}(\sigma)=e_2\big(P_{a,\sigma},b(\sigma)\big)+\gamma_{a,b}(\sigma).\]
The importance of the function $\psi_{a,b}$ comes from its role in \Cref{main_ct_comp_prop} below.
\end{defi}

\begin{remark} \label{rem:local_chars}
The function $\psi_{a,b}$ is constant on conjugacy classes. Indeed, given $\sigma\in S$ and $\tau\in G_K$,  one computes
\[P_{a,\tau \sigma \tau^{-1}}=\tau P_{a,\sigma}+a(\tau).\]
From this it follows easily that
$\psi_{a,b}(\tau\sigma \tau^{-1})=\psi_{a,b}(\sigma).$ We remark also that if $v\nmid 2N_A\infty$   is a place of $K$ such that $A[2]^{\textup{Frob}_v}=0$ and such that $a$, $b$, and $\gamma_{a,b}$ are unramified at $v$, then 
\[\textup{res}_v\big(P_{a,\textup{Frob}_v}\cup b + \gamma_{a,b}\big)\] is an unramified quadratic character  of $G_{K_v}$. Indeed, it has trivial coboundary since $dP_{a,\textup{Frob}_v}=\textup{res}_v(a)$ and $d\gamma_{a,b}= a\cup b$. Up to identifying $\mu_2$ with $\tfrac{1}{2}\mathbb{Z}/\mathbb{Z}$, the quantity $\psi_{a,b}(\textup{Frob}_v)$ is the result of evaluating this local character at the Frobenius element in $G_{K_v}$. See also \Cref{magnus_remark} below for another interpretation of these local characters. 
\end{remark}

 The following is an analogue of \cite[Theorem 3.2]{smi16} and \cite[Proposition 3.29]{Har19}, which both prove a similar result in the case that $A[2]\subseteq A(K)$. In the statement, for a quadratic character $\chi:G_K\rightarrow \mu_2$, we denote by $\textup{Ram}(\chi)$ the set of nonarchimedean places of $K$ at which $\chi$ is ramified. 

\begin{proposition} \label{main_ct_comp_prop}
Let $\mathfrak{a}$ and $\mathfrak{b}$ be elements of $\textup{Sel}^2(A/K)$, and represent them by $1$-cocycles $a$ and $b$ respectively. Let $\gamma_{a,b}:G_K\rightarrow \mu_2$ be a $1$-cochain such that $d\gamma_{a,b}=a\cup b$, and let $\psi_{a,b}$ be defined from these quantities as in \Cref{psi_b_b_defi}. Let $\Sigma$ be a finite set of places of $K$ containing all places dividing $2N_A\infty$, and such that $\gamma_{a,b}$ is unramified outside $\Sigma$. 

Suppose that $\chi:G_K\rightarrow \mu_2$ is a quadratic character such that  $\textup{res}_v(\chi)$ is trivial for all $v\in \Sigma$, and such that $A[2]^{\textup{Frob}_v}=0$ for all places  $v\in \textup{Ram}(\chi)\setminus \Sigma$. Then $\mathfrak{a}$ and $\mathfrak{b}$ lie in  $\textup{Sel}^2(A^{\chi}/K)$,   and we have
\[\textup{CTP}(\mathfrak{a},\mathfrak{b})+\textup{CTP}_{\chi}(\mathfrak{a},\mathfrak{b})=\sum_{v\in \textup{Ram}(\chi)\setminus \Sigma}\psi_{a,b}(\textup{Frob}_v).\]
\end{proposition}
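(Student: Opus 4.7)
The fact that $\mathfrak{a}, \mathfrak{b} \in \textup{Sel}^2(A^\chi/K)$ is immediate from \Cref{selmer_agreement_lemma}, whose hypotheses are exactly those imposed on $\chi$; the content of the proposition is thus the explicit identity. The plan is to apply the standard Poonen--Stoll-type cochain-level formula for the Cassels--Tate pairing on $2$-Selmer groups (the variant used in \cite{MS21} Section 8.2) to both $\textup{CTP}$ and $\textup{CTP}_\chi$ using the same cocycles $a, b$ and the same global $\gamma_{a,b}$, and then analyse the resulting sum of local invariants place-by-place. Because the identification \eqref{2_tors_ident} preserves the Weil pairing, $a \cup b$ and $\gamma_{a,b}$ are legitimately common to both computations.

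Concretely, for each place $v$ one picks a lift $\tilde a_v \in C^1(G_{K_v}, A(\overline{K}_v))$ with $2\tilde a_v = \textup{res}_v(a)$ (available because $\mathfrak{a} \in \textup{Sel}^2(A/K)$) and obtains
\[
\textup{CTP}(\mathfrak{a},\mathfrak{b}) \;=\; \sum_v \textup{inv}_v\bigl(\tilde a_v \cup b - \textup{res}_v(\gamma_{a,b})\bigr),
\]
and analogously for $\textup{CTP}_\chi$ with lifts $\tilde a_v'$ inside $A^\chi(\overline{K}_v)$. At $v \in \Sigma$ the character $\chi$ is locally trivial, so $A$ and $A^\chi$ are $K_v$-isomorphic; taking $\tilde a_v = \tilde a_v'$, the contributions to the two pairings coincide and sum to $0$ in $\tfrac{1}{2}\mathbb{Z}/\mathbb{Z}$. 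At unramified $v \notin \Sigma$, \Cref{selmer_conds_lemma}(ii) gives $\mathscr{S}_v(A) = \mathscr{S}_v(A^\chi) = H^1_{\textup{ur}}(K_v, A[2])$, and the vanishing of $H^1_{\textup{ur}}(K_v, A)$ (which underlies that lemma) lets one choose $\tilde a_v$ and $\tilde a_v'$ so that $\tilde a_v \cup b$ and $\tilde a_v' \cup b$ each differ from $\textup{res}_v(\gamma_{a,b})$ by an unramified $\mu_2$-valued $2$-cocycle, contributing trivially to the sum of local invariants since $H^2_{\textup{ur}}(K_v, \mu_2) = 0$.

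The only contributions therefore come from $v \in \textup{Ram}(\chi) \setminus \Sigma$. At such a place, the condition $A[2]^{\textup{Frob}_v} = 0$ combined with \Cref{selmer_conds_lemma}(i) forces $H^1(K_v, A[2]) = 0$, so $\textup{res}_v(a) = dP_{a,\textup{Frob}_v}$ for the unique element $P_{a,\textup{Frob}_v} \in A[2]$. Feeding this explicit $0$-cochain trivialisation into the combined expression for $\textup{CTP} + \textup{CTP}_\chi$ leaves a residual local $1$-cochain equal to $\textup{res}_v(P_{a,\textup{Frob}_v} \cup b + \gamma_{a,b})$; by \Cref{rem:local_chars} this is an unramified quadratic character of $G_{K_v}$ whose value at Frobenius is exactly $\psi_{a,b}(\textup{Frob}_v)$, which is what the local invariant computes. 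The main obstacle to making this rigorous will be the bookkeeping at unramified $v \notin \Sigma$: here $A$ and $A^\chi$ are not $K_v$-isomorphic but are related by an unramified quadratic twist, so the comparison of $\tilde a_v$ and $\tilde a_v'$ requires an explicit $2$-isogeny argument (the motivating reason for the Weil restriction setup of \cite{MS21} Section 8.2). Once that cancellation is secured, the remaining contributions are precisely those listed on the right-hand side and the formula follows.
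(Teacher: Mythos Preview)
Your overall route---compute $\textup{CTP}$ and $\textup{CTP}_\chi$ separately via a Poonen--Stoll-type local formula and add---is legitimate, and the paper itself notes (in the remark following the proof) that such a direct cocycle approach via \cite[Lemma 5.8]{MR3951582} works. Your treatment of $v\in\Sigma$ and of unramified $v\notin\Sigma$ is also correct: at the latter, both $A$ and $A^\chi$ have good reduction (N\'eron models commute with unramified base change), $A(K_v^{\textup{ur}})$ and $A^\chi(K_v^{\textup{ur}})$ are $2$-divisible, so $\tilde a_v$ and $\tilde a_v'$ can each be taken unramified, and each local term vanishes separately since $H^2_{\textup{ur}}(K_v,\mu_2)=0$. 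This means your ``main obstacle'' paragraph is misplaced---there is nothing to compare at those places, and no $2$-isogeny bookkeeping is needed there.

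The genuine gap is at $v\in\textup{Ram}(\chi)\setminus\Sigma$. Here the local term for $\textup{CTP}$ still vanishes (everything on the $A$ side is unramified), so the entire right-hand side must come from the local term for $\textup{CTP}_\chi$ alone. But now $\chi$ is ramified, $A^\chi$ no longer has good reduction, and the lift $\tilde a_v'$ cannot be taken unramified; you assert that ``feeding in the $0$-cochain trivialisation $P_{a,\textup{Frob}_v}$'' produces $\psi_{a,b}(\textup{Frob}_v)$, but you have not written down what $\tilde a_v'$ is or computed $\textup{inv}_v(\tilde a_v'\cup b-\gamma_{a,b})$. This is precisely the computation that carries the content of the proposition, and it does not follow formally from what you have written.

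The paper avoids this difficulty by a genuinely different manoeuvre: rather than summing two separate Cassels--Tate pairings, it invokes \cite[Section 8.2]{MS21} to identify $\textup{CTP}+\textup{CTP}_\chi$ with a \emph{single} pairing $\textup{CTP}_E$ attached to the exact sequence $0\to A[2]\to R_{F/K}A[2]\to A[2]\to 0$ coming from Weil restriction along the quadratic extension $F/K$ cut out by $\chi$. In that framework one needs only one local lift $\overline{\phi}_v$ per place, and at $v\in\textup{Ram}(\chi)\setminus\Sigma$ the choice $\overline{\phi}_v=d(1\otimes P_{a,\textup{Frob}_v})$ immediately yields a local contribution $\textup{inv}_v\big(\chi\cup(P_{a,\textup{Frob}_v}\cup b+\gamma_{a,b})\big)=\psi_{a,b}(\textup{Frob}_v)$, with the factor $\chi$ appearing naturally from the identity $df=i(\chi\cup a)$. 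If you wish to push through your direct approach instead, you must carry out the analogous computation of the $A^\chi$-local term at ramified places explicitly; the twist by $\chi$ in the Galois action on $A^\chi(\overline{K_v})$ is exactly what produces the cup with $\chi$ that pairs against the unramified character of \Cref{rem:local_chars}.
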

 
 \begin{remark}
That one can find a cochain $\gamma_{a,b}$ as in the statement  follows from reciprocity for the Brauer group of $K$ and the fact that, for all places $v$, the Kummer image $\mathscr{S}_v(A)$ is  isotropic for the local Tate pairing \eqref{eq:local_tate}. 
 \end{remark}
 
\begin{proof}[Proof of \Cref{main_ct_comp_prop}]
That $\mathfrak{a}$ and $\mathfrak{b}$ lie in   $\textup{Sel}^2(A^{\chi}/K)$ follows from \Cref{selmer_agreement_lemma}. 

It is shown in \cite[Section 8.2]{MS21} that $\textup{CTP}(\mathfrak{a},\mathfrak{b})+\textup{CTP}_\chi(\mathfrak{a},\mathfrak{b})$ is the result of pairing  $\mathfrak{a}$ and $\mathfrak{b}$ under the Cassels--Tate pairing $\textup{CTP}_E$ associated, via \cite[Definition 3.2]{MS222}, to the exact sequence displayed as \cite[Equation (8.4)]{MS21}. That is, to the exact sequence 
\begin{equation} \label{morgan_smith_seq}
E=\Big[0\rightarrow \big(A[2],\mathscr{W}_2+\mathscr{W}_2^{\chi}\big)\stackrel{i}{\longrightarrow}   \big(R_{F/K}A[2],\mathscr{W}_{2}'\big) \stackrel{\nu}{\longrightarrow} (A[2],\mathscr{W}_2\cap\mathscr{W}_2^{\chi})\rightarrow 0\Big],
\end{equation}
in the category $\textup{SMod}_K$  of \cite[Definition 1.1]{MS222}. Let us explain the terms in this sequence.  To describe the underlying exact sequence of $G_K$-modules, note that $\textup{R}_{F/K}A[2]$ is naturally identified with the $G_K$-module $\textup{Ind}_{F/K}A[2]= \mathbb{Z}[G_K]\otimes_{\mathbb{Z}[G_F]}A[2]$. With this identification, the maps $i$ and $\nu$ are defined by setting, for $x\in A[2]$ and $\sigma \in G_K$,
\[i(x) =\sum_{\sigma \in G_K/G_F}\sigma \otimes \sigma^{-1}x \quad \textup{ and }\quad \nu(\sigma \otimes x)= \sigma x.\]
In the notation of \Cref{local_tate_pairing_identified_rmk}, the local conditions subgroups are given by
\[\mathscr{W}_2=\prod_{v}\mathscr{S}_v(A),\quad \mathscr{W}_2^{\chi}=\prod_v\mathscr{S}_v(A,\textup{res}_v(\chi)),\quad \textup{ and }\quad \mathscr{W}_2'= \prod_v\mathscr{S}_v(R_{F/K}A).\]

We now follow \cite[Definition 3.2]{MS222} to compute $\textup{CTP}_E(\mathfrak{a},\mathfrak{b})$. 
With the representing cocycles $a$ and $b$ chosen as above, consider the $1$-cochain $f=1\otimes a$, valued in $\textup{Ind}_{F/K}A[2]$. By construction, $\nu(f)=a$. Further, one has $df=i(\chi \cup a)$. In the notation of \cite[Definition 3.2]{MS222}, we may thus take $\epsilon$ to be the $\mu_2$-valued $2$-cochain $\chi \cup \gamma_{a,b}$, so that 
  \[d\epsilon = \chi \cup a\cup b= df\cup b.\]
  
  The next step is to choose, for each place $v$ of $K$, a $1$-cocycle   $\overline{\phi}_v:G_{K_v}\rightarrow \textup{Ind}_{F/K}A$, such that $\nu(\overline{\phi}_v)=\textup{res}_v(a),$ and such that $\overline{\phi}_v$ represents a class in $\mathscr{S}_v(R_{F/K}A)$. Having done this, we have 
  \begin{equation} \label{ctp_ugly_formula}
  \textup{CTP}_E(\mathfrak{a},\mathfrak{b})=\sum_{v\textup{ place of }K}\textup{inv}_v\Big(i^{-1}(\textup{res}_v(f)-\overline{\phi}_v)\cup \textup{res}_v(b)- \textup{res}_v(\chi \cup\gamma_{a,b})\Big).\end{equation}
  
  For places $v\in \Sigma$, the assumption that  $\textup{res}_v(\chi)$ is trivial allows us to take $\overline{\phi}_v=1\otimes \textup{res}_v(a)$. Indeed, triviality of $\textup{res}_v(\chi)$ implies that  $R_{F/K}A$ is isomorphic to $A\times A$ over $K_v$, that the sequence of Galois-modules underlying  \eqref{morgan_smith_seq} splits over $K_v$, and that  $\mathscr{S}_v(R_{F/K}A)$ identifies with $\mathscr{S}_v(A)\oplus \mathscr{S}_v(A)$. With this choice of $\overline{\phi}_v$, we see that the contribution to \eqref{ctp_ugly_formula} from places $v\in \Sigma$ is trivial.    
  
Next, let $v$ be a place outside $\Sigma \cup \textup{Ram}(\chi)$. Then the $G_{K_v}$-module $\textup{Ind}_{F/K}A[2]$ is unramified at $v$, and $R_{F/K}A$ has good reduction at $v$. Thus $\mathscr{S}_v(R_{F/K}A)=H^1_{\textup{ur}}(K_v,\textup{Ind}_{F/K}A[2])$
and any permitted choice of $\overline{\phi}_v$ necessarily factors through $\textup{Gal}(K^{\textup{ur}}/K)$. Thus our choice of $\Sigma$ ensures that   the $\mu_2$-valued $2$-cocycle 
\[i^{-1}(\textup{res}_v(f)-\overline{\phi}_v)\cup \textup{res}_v(b)- \textup{res}_v(\chi \cup\gamma_{a,b})\]
factors through $\textup{Gal}(K^{\textup{ur}}/K_v)$. Since the group $H^2(\textup{Gal}(K^{\textup{ur}}/K_v),\mu_2)$ is trivial (cf.  \cite[Example in Chapter 3]{NSW08}), we see that the contribution to \eqref{ctp_ugly_formula} at $v$ is trivial also.  
  
Finally, let $v\in \textup{Ram}(\chi)\setminus \Sigma$, so that  $A[2]^{\textup{Frob}_v}=0$. To ease notation, write $P_v=P_{a,\textup{Frob}_v}$.  We may then take $\overline{\phi}_v=d(1\otimes P_v)$. Indeed, since $\overline{\phi}_v$ is a $1$-coboundary, its class in $H^1(K_v,R_{F/K}A[2])$ necessarily lies in   $\mathscr{S}_v(R_{F/K}A)$. Further, since $\nu$ is $G_{K_v}$-equivariant,  we have 
  \[\nu(\overline{\phi}_v)=d\circ \nu(1\otimes P_v)=dP_v=\textup{res}_v(a).\]
  The difference $\textup{res}_v(f)-\overline{\phi}_v$ is then equal to $i(\textup{res}_v(\chi) \cup P_v)$, so the contribution to \eqref{ctp_ugly_formula} at $v$ is 
  \[\textup{inv}_v\big(\chi \cup   (P_v\cup b+\gamma_{a,b})\big)= \psi_{a,b}(\textup{Frob}_v),\]   
  where we have dropped each instance of $\textup{res}_v$ from the left hand side to ease notation.
Here for the displayed equality we are using that $\chi$ is ramified at $v$, that $v\nmid 2$, and that  $P_v\cup b+\gamma_{a,b}$ is an unramified quadratic character of $G_{K_v}$.
  
  Summing over all places $v$ of $K$ gives the result. 
\end{proof}

\begin{remark}
Rather than drawing on the theory of \cite{MS222}, \Cref{main_ct_comp_prop} can also be proven from  \cite[Lemma 5.8]{MR3951582} via a direct cocycle computation. 
\end{remark}

 \begin{remark} \label{magnus_remark} 
Take  $a$, $b$ and $\gamma_{a,b}$ as in the statement of \Cref{main_ct_comp_prop}, and let $v\nmid 2N_A\infty$ be a place of $K$ such that $A[2]^{\textup{Frob}_v}=0$ and such that $a$, $b$, and $\gamma_{a,b}$ are unramified at $v$. The local quadratic characters $\textup{res}_v\big(P_{a,\textup{Frob}_v}\cup b + \gamma_{a,b}\big)$ discussed in \Cref{rem:local_chars}, and appearing in the proof of  \Cref{main_ct_comp_prop},  have the following interpretation. Let $V_a$ be the $G_K$-module with underlying abelian group $\mu_2\oplus A[2]$, but with $G_K$-action twisted by the cocycle $a$. Thus for $\sigma \in G_K$ and  $(\lambda,x)\in V_a$, we have
\[ \sigma  \cdot (\lambda, x)= \big(\sigma \lambda+e_2(a(\sigma),x)~,~ \sigma x\big).\]
Here again we are writing $\mu_2$ additively. The inclusion/projection maps on the level of abelian groups realise $V_a$ as an extension of $G_K$-modules
\begin{equation}\label{seq_magnus}
0\longrightarrow \mu_2\longrightarrow V_a \longrightarrow A[2]\longrightarrow 0.
\end{equation}
 Denoting by $\mathfrak{a}$ and $\mathfrak{b}$ the classes of $a$ and $b$ in $H^1(K,A[2])$, the resulting coboundary map $H^1(K,A[2])\rightarrow H^2(K,\mu_2)$ sends $\mathfrak{b}$ to $\mathfrak{a}\cup \mathfrak{b}$. In particular,  since $\mathfrak{a}\cup \mathfrak{b}=0$   we can lift $\mathfrak{b}$ to $H^1(K,V_a)$. One checks that such a lift is given explicitly by the class $\widetilde{\mathfrak{b}}$ of the $1$-cocycle 
\[\sigma \longmapsto (\gamma_{a,b}(\sigma), b(\sigma)).\]
The vanishing of both $H^0(K_v,A[2])$ and $H^1(K_v,A[2])$ imply that there is a unique $G_{K_v}$-equivariant splitting $s_v:V_a\rightarrow \mu_2$ of the sequence \eqref{seq_magnus} (such splittings are in one-to-one correspondence with elements $x$ of $A[2]$ satisfying $dx=\textup{res}_v(a)$). 
In particular, $s_v$ induces a map $H^1(K_v,V_a)\rightarrow H^1(K_v,\mu_2)$, from which we obtain an unramified quadratic character $\chi_v:G_{K_v}\rightarrow \mu_2$ as $\chi_v=s_v\circ \textup{res}_v(\widetilde{\mathfrak{b}})$. One checks that the map $s_v$ is given explicitly by sending $(\lambda,x)\in V_a$ to 
$ \lambda+e_2(P_{a,\textup{Frob}_v},x).$ From this it follows that
$\chi_v=\textup{res}_v\big(P_{a,\textup{Frob}_v}\cup b + \gamma_{a,b}\big).$

For an interpretation of the sequence \eqref{seq_magnus} in terms of the Kummer variety associated to $\mathfrak{a}$, see \cite[Section 3.2]{Har19}. 
 \end{remark}

\section{Torsors under $A[2]$} \label{big_torsors_section} 

\Cref{main_ct_comp_prop} will ultimately allow us to use the Chebotarev density theorem to control the behaviour of the Cassels--Tate pairing under quadratic twist. However, in order to do this, it will be necessary to understand the extensions of $K$ cut out by the cochains $\gamma_{a,b}$ appearing in the statement. We turn to this now. Write $L=K(A[2])$ and $G=\textup{Gal}(L/K)$. For $\sigma \in G_K$, denote by $\overline{\sigma}$ its image  in $G$. In this section, we assume that the abelian variety $A$ satisfies part  (A) of \Cref{existence_of_frob_elements_prop_1}. That is, that $A[2]$ is a simple $\mathbb{F}_2[G]$-module, and $\textup{End}_G(A[2])=\mathbb{F}_2$. 

\subsection{Field extensions defined by cocycles} \label{ssec:cocyclefieldext}

\begin{notation}\label{KT_extension}
Let $\mathscr{S}$ be a finite subgroup of $H^1(K,A[2])$. Let $T=\{\mathfrak{a}_1,...,\mathfrak{a}_{n}\}$ be a $\mathbb{F}_2$-linearly independent subset of $\mathscr{S}$ whose elements project to a basis for 
\[\mathscr{S}/\mathscr{S}\cap H^1(G,A[2])\stackrel{\textup{res}_L}{\hooklongrightarrow}H^1(L,A[2]).\]
Let $a_1,...,a_{n}$   be $1$-cocycles representing the classes of $\mathfrak{a}_1,...,\mathfrak{a}_n$ and define the homomorphism
\[\varphi_{T}: G_K\longrightarrow A[2]^n \rtimes G\]
by setting
\[\varphi_T(\sigma)=\big((a_i(\sigma))_{i=1}^n, \overline{\sigma}\big).\]
Here the action of $G$ on $A[2]^n$ is the diagonal one. Define $K_T$ to be the fixed field of $\ker(\varphi_T)$, so that 
 the map $\varphi_T$  induces an injection 
\begin{equation} \label{eq:varphi_semidirect}
\varphi_T: \textup{Gal}(K_T/K) \hookrightarrow A[2]^n \rtimes G.
\end{equation}
\end{notation}

We remark that the following proposition crucially uses part  (A) of \Cref{existence_of_frob_elements_prop_1}. 

\begin{proposition} \label{iso_to_semi_direct_general}
The map \eqref{eq:varphi_semidirect} is an isomorphism. 
\end{proposition}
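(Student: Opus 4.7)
The plan is to prove surjectivity of $\varphi_T$ (injectivity being automatic from the definition of $K_T$). Projecting onto the $G$-factor, $\varphi_T$ composed with the projection $A[2]^n \rtimes G \twoheadrightarrow G$ is the tautological surjection $G_K \twoheadrightarrow G$, so the image of $\varphi_T$ surjects onto $G$. Thus it suffices to show that $H := \varphi_T(G_L) \subseteq A[2]^n$ equals all of $A[2]^n$. Since $G_L$ acts trivially on $A[2]$, each cocycle $a_i|_{G_L}$ is a homomorphism $G_L \to A[2]$, and the restriction $\varphi_T|_{G_L}$ is simply $\sigma \mapsto \bigl((a_i(\sigma))_i, 1\bigr)$. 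So $H$ is the image of $\sigma \mapsto (a_1(\sigma),\ldots,a_n(\sigma))$ inside $A[2]^n$. Moreover, $H$ is normal in $\varphi_T(G_K)$, and conjugation by elements $(0,g) \in A[2]^n \rtimes G$ exhibits $H$ as a $G$-stable submodule of $A[2]^n$.

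Next, I would invoke part (A) of \Cref{existence_of_frob_elements_prop_1}: since $A[2]$ is simple over $\mathbb{F}_2[G]$ with $\textup{End}_G(A[2])=\mathbb{F}_2$, the module $A[2]^n$ is semisimple and any proper $G$-submodule $H \subsetneq A[2]^n$ admits a nonzero $G$-equivariant linear quotient $\pi \colon A[2]^n \twoheadrightarrow A[2]$ vanishing on $H$. By the Schur-type computation $\textup{Hom}_G(A[2]^n, A[2]) = \mathbb{F}_2^n$, such a $\pi$ is given by $(v_1,\ldots,v_n) \mapsto \sum_i \lambda_i v_i$ for some nonzero tuple $(\lambda_i) \in \mathbb{F}_2^n$.

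The main step is then to derive a contradiction from the existence of such a $\pi$. Vanishing of $\pi$ on $H$ translates, via $\varphi_T|_{G_L}$, into the identity $\sum_i \lambda_i a_i(\sigma) = 0$ for every $\sigma \in G_L$, i.e., $\sum_i \lambda_i \cdot a_i|_{G_L} = 0$ in $\textup{Hom}(G_L, A[2]) = H^1(L,A[2])$. But by inflation-restriction, the kernel of $H^1(K,A[2]) \to H^1(L,A[2])$ is exactly $H^1(G,A[2])$, so the images of $\mathfrak{a}_1,\ldots,\mathfrak{a}_n$ in $H^1(L,A[2])$ are precisely their images in $\mathscr{S}/(\mathscr{S}\cap H^1(G,A[2]))$. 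By the defining hypothesis on $T$, these images form an $\mathbb{F}_2$-linearly independent set, forcing all $\lambda_i = 0$, a contradiction.

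The only real obstacle here is the module-theoretic input — understanding proper $G$-submodules of $A[2]^n$ — which is precisely where Assumption (A) is used in an essential way; the rest of the argument is bookkeeping tying the cocycle $a_i$ on $G_L$ to the class $\mathfrak{a}_i \in H^1(L,A[2])$ via inflation-restriction and then reading off linear dependence from the existence of $\pi$.
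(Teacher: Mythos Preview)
Your proof is correct and follows essentially the same approach as the paper, which simply cites the argument for (i)$\Rightarrow$(iv) in \cite[Proposition 3.2]{HS16}; you have written out in detail exactly that argument, using simplicity of $A[2]$ together with $\textup{End}_G(A[2])=\mathbb{F}_2$ to classify $G$-equivariant maps $A[2]^n\to A[2]$ and hence to force $H=A[2]^n$. One minor clarification: the elements $(0,g)$ need not lie in $\varphi_T(G_K)$, but since $A[2]^n$ is abelian, conjugation by any $(a,g)$ in the image acts on $A[2]^n$ exactly as conjugation by $(0,g)$, so your conclusion that $H$ is $G$-stable is unaffected.
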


\begin{proof}
The argument for (i) implies (iv) in  \cite[Proposition 3.2]{HS16} works in this setting. 
\end{proof}

 \subsection{Extension classes} \label{ssec:ext_classes}

\begin{notation}
With $n$ as above, write $\Gamma=A[2]^n \rtimes G$. 
 Write $[n]=\{1,...,n\}$. For each $i\in [n]$, let $f_i:\Gamma \rightarrow A[2]$ denote  the projection onto the $i$-th factor of $A[2]^n$.
This is a $1$-cocycle  valued in $A[2]$, and we write $\mathfrak{f}_i$ for its class in $H^1(\Gamma_n,A[2])$. With $\varphi_T$ as in   \eqref{eq:varphi_semidirect}, we have $a_i=f_i \circ \varphi_T$. 

Let $T'=\{\mathfrak{a}_{n+1},...,\mathfrak{a}_{n+m}\}$ be a basis for 
\[\mathscr{S}\cap H^1(G,A[2])=\ker\big(\mathscr{S}\stackrel{\textup{res}_L}{\longrightarrow}H^1(L,A[2])\big).\]
We allow for the possibility that $T'$ is empty. Note that $T\sqcup T'$ is a basis for $\mathscr{S}$.
For each $i\in [n+m]\setminus [n]$, denote by  $\mathfrak{f}_i$ the image of $\mathfrak{a}_i$ in $H^1(\Gamma,A[2])$ under the inflation map $H^1(G,A[2])\rightarrow H^1(\Gamma,A[2])$. 

Finally, define $\mathcal{S}=\mathcal{S}_0\sqcup \mathcal{S}_1$, where 
\[\mathcal{S}_0=\big( [n+m]\setminus [n]\big)\times [n]\quad \textup{ and }\quad\mathcal{S}_1=\{(i,j)\in [n]^2~~\colon~~ i<j \}.\]
\end{notation}

  Composition of cup-product and the Weil pairing gives a pairing
 \[\cup :H^1(\Gamma,A[2])\times H^1(\Gamma,A[2])\longrightarrow H^2(\Gamma,\mu_2).\]
For each $(i,j)\in [n+m]^2$ we obtain via this pairing a class $\mathfrak{f}_i\cup \mathfrak{f}_j$ in $H^2(\Gamma,\mu_2)$. 

 
 \begin{proposition} \label{independent_cups_prop}  
The $mn+n(n-1)/2$ classes 
\[\big(\mathfrak{f}_i\cup \mathfrak{f}_j\big)_{(i,j)\in \mathcal{S}} \]
  are linearly independent in $H^2(\Gamma,\mu_2)$.
 \end{proposition}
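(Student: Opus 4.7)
The plan is to deduce from any linear relation
\[\sum_{(i,j)\in\mathcal{S}} c_{ij}\,\mathfrak{f}_i\cup\mathfrak{f}_j=0\]
in $H^2(\Gamma,\mu_2)$ that every $c_{ij}$ vanishes, by restricting in turn to two carefully chosen subgroups of $\Gamma=A[2]^n\rtimes G$. Because $H^2(\Gamma,\mu_2)=H^2(\Gamma,\mathbb{F}_2)$ (the action on $\mu_2$ is trivial), I will work throughout with $\mathbb{F}_2$-coefficients.

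First I would restrict to the normal subgroup $A[2]^n\hookrightarrow \Gamma$. For $i\in [n+m]\setminus[n]$ the class $\mathfrak{f}_i$ is inflated from $H^1(G,A[2])$, so $\mathfrak{f}_i|_{A[2]^n}=0$ and all $\mathcal{S}_0$-terms drop out. For $i\in[n]$ the restriction of $\mathfrak{f}_i$ is the projection $p_i\colon A[2]^n\to A[2]$ onto the $i$-th factor, so the surviving relation reads $\sum_{(i,j)\in \mathcal{S}_1} c_{ij}\,(p_i\cup p_j)=0$ in $H^2(A[2]^n,\mathbb{F}_2)$. Since $A[2]^n$ is an elementary abelian $2$-group of rank $2dn$, the cohomology ring $H^*(A[2]^n,\mathbb{F}_2)$ is the polynomial algebra on $H^1$. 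Fixing a basis $e_1,\dots,e_{2d}$ of $A[2]$ with dual basis $x_1,\dots,x_{2d}$, and writing $x_l^{(i)}$ for the coordinate dual to $e_l$ placed in the $i$-th factor slot, the class of $p_i\cup p_j$ becomes $\sum_{k,l} e_2(e_k,e_l)\, x_k^{(i)} x_l^{(j)}$. For $i<j$ this is a nonzero homogeneous polynomial (by non-degeneracy of the Weil pairing), and distinct pairs $(i,j)$ use monomials supported in disjoint pairs of factor slots. Hence the $p_i\cup p_j$, $(i,j)\in\mathcal{S}_1$, are linearly independent, forcing $c_{ij}=0$ on $\mathcal{S}_1$.

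To kill the remaining $\mathcal{S}_0$-terms, for each $k\in [n]$ I would restrict instead to the subgroup $A[2]_k\rtimes G\subset \Gamma$, where $A[2]_k$ denotes the $k$-th factor. The class $\mathfrak{f}_j$ restricts to $0$ for $j\in[n]\setminus\{k\}$, and to the tautological class $\mathfrak{t}\in H^1(A[2]\rtimes G,A[2])$, $(v,g)\mapsto v$, for $j=k$. The relation becomes
\[\sum_{i\in [n+m]\setminus[n]} c_{ik}\,\bigl(\textup{inf}(\mathfrak{a}_i)\cup \mathfrak{t}\bigr)=0\]
in $H^2(A[2]\rtimes G,\mathbb{F}_2)$. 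So it suffices to show that $\Phi\colon H^1(G,A[2])\to H^2(A[2]\rtimes G,\mathbb{F}_2)$, $\mathfrak{a}\mapsto \textup{inf}(\mathfrak{a})\cup \mathfrak{t}$, is injective; combined with linear independence of $\mathfrak{a}_{n+1},\dots,\mathfrak{a}_{n+m}$ in $H^1(G,A[2])$, this yields $c_{ik}=0$ for every $(i,k)\in\mathcal{S}_0$.

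I expect this final injectivity to be the main obstacle, and I would prove it by a direct cocycle-coboundary chase. Represent $\mathfrak{a}$ by $a\colon G\to A[2]$; then $\Phi(\mathfrak{a})$ is represented by the explicit $2$-cocycle $\phi((v_1,g_1),(v_2,g_2)) = e_2(a(g_1),\,g_1 v_2)$. Assuming $\phi=dh$ for a $1$-cochain $h\colon A[2]\rtimes G\to \mathbb{F}_2$, successive specialisation of the coboundary identity to the normal subgroup $A[2]$, to the splitting $G\hookrightarrow A[2]\rtimes G$, and then to mixed pairs $((v,1),(0,g))$ forces $h$ to take the separable form $h(v,g)=h_1(v)+h_2(g)$ for homomorphisms $h_1\colon A[2]\to\mathbb{F}_2$ and $h_2\colon G\to \mathbb{F}_2$. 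Substituting this back, the remaining constraint collapses to $h_1((g-1)v) = e_2(a(g),gv)$ for all $g\in G, v\in A[2]$. Writing $h_1(v)=e_2(P,v)$ for the unique $P\in A[2]$ afforded by non-degeneracy of the Weil pairing, and using $G$-invariance of $e_2$ to rearrange both sides, this identity becomes $(g-1)P = a(g)$ for all $g\in G$, i.e.\ $\mathfrak{a}=0$ in $H^1(G,A[2])$. Conceptually, this is the statement that the edge map from $H^1(G,A[2])$ into $F^1 H^2(A[2]\rtimes G,\mu_2)/F^2$ in the Hochschild--Serre spectral sequence for the split extension $1\to A[2]\to A[2]\rtimes G\to G\to 1$ is injective, but the direct cocycle route above seems the cleanest to write. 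I note that beyond Assumption (A), already in force in the section, no further hypothesis on $G$ appears to be needed for the argument.
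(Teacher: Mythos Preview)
Your argument is correct. The overall architecture matches the paper's: both proofs first restrict to the abelian subgroup $A[2]^n$ to kill the $\mathcal{S}_1$-coefficients, and then deal with the $\mathcal{S}_0$-coefficients separately. Your treatment of $\mathcal{S}_1$ via the polynomial algebra $H^*(A[2]^n,\mathbb{F}_2)\cong \mathbb{F}_2[x^{(i)}_l]$ is essentially the same as the paper's identification of $H^2(A[2]^n,\mu_2)$ with $\mu_2$-valued quadratic forms on $A[2]^n$ (over $\mathbb{F}_2$ these are the same object), and the independence check is the same.

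The genuine difference is in how the $\mathcal{S}_0$-step is handled. The paper works on all of $\Gamma=A[2]^n\rtimes G$ at once and invokes an exact sequence of Tahara \cite{tah72},
\[0\longrightarrow H^1\big(G,\textup{Hom}(A[2]^n,\mu_2)\big)\stackrel{\alpha}{\longrightarrow}\widetilde{H}^2(\Gamma,\mu_2)\stackrel{\textup{res}}{\longrightarrow}H^2(A[2]^n,\mu_2),\]
identifying $\alpha$ explicitly as $(\mathfrak{t}_j)_j\mapsto \sum_j \mathfrak{t}_j\cup\mathfrak{f}_j$; injectivity of $\alpha$ then finishes immediately. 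You instead restrict to each factor $A[2]_k\rtimes G$ and carry out a direct coboundary chase to show that $\mathfrak{a}\mapsto \textup{inf}(\mathfrak{a})\cup\mathfrak{t}$ is injective on $H^1(G,A[2])$. Your computation is exactly a hands-on proof of the $n=1$ case of the injectivity of $\alpha$ in Tahara's sequence (equivalently, of the relevant Hochschild--Serre edge map for the split extension). The payoff of your route is self-containment---no external reference is needed---while the paper's route is shorter once Tahara's result is granted. Both use nothing beyond non-degeneracy and $G$-invariance of the Weil pairing; your remark that Assumption~(A) is not invoked here is consistent with the paper's proof.
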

 
 \begin{proof}
 We first claim that the classes
$\mathfrak{f}_i\cup \mathfrak{f}_j$, for $(i,j)\in \mathcal{S}_1$,  
are linearly independent after restriction to $H^2(A[2]^n,\mu_2)$.   

In general, given a class $\boldsymbol \eta$ in $H^2(A[2]^n,\mu_2)$, there corresponds a central extension
 \[1\longrightarrow \mu_2 \longrightarrow E_{\boldsymbol \eta}\longrightarrow A[2]^n\longrightarrow 1.\]
To this we can associate a quadratic form $q_{\boldsymbol \eta}:A[2]^n\rightarrow \mu_2$, given by lifting to $E_{\boldsymbol \eta}$ and squaring.  Explicitly, representing $\boldsymbol \eta$ by a normalised $2$-cocycle $\eta$, for $v$ in $A[2]^n$ we have  
 \begin{equation*} \label{quad_form}
 q_{\boldsymbol\eta}(v)=\eta(v,v).
 \end{equation*}
As explained in e.g. \cite[Theorem 2.4.1]{Lur01}, the association sending a class $\boldsymbol \eta$ to $q_{\boldsymbol \eta}$ gives an isomorphism from $H^2(A[2]^n,\mu_2)$ to the $\mathbb{F}_2$-vector space of $\mu_2$-valued quadratic forms on $A[2]^n$. Given $i,j\in [n]$, one computes that the quadratic form $q_{ij}$ associated to $\mathfrak{f}_i\cup \mathfrak{f}_j$ sends an element $\textbf{v}=(v_k)_{k\in [n]}\in A[2]^n$ to
\[q_{ij}\big(\textbf{v}\big)=e_2(v_i,v_j).\]
Since these quadratic forms are visibly linearly independent as $i$ and $j$ range over elements of $[n]$ with $i<j$, the claim follows. 

Next, by \cite[Theorem 2]{tah72}, we have an exact sequence 
 \begin{equation}
 0 \longrightarrow H^1\big(G,\textup{Hom}(A[2]^n,\mu_2)\big)\stackrel{\alpha}{\longrightarrow} \widetilde{H}^2(\Gamma,\mu_2)\stackrel{\textup{res}}{\longrightarrow}H^2(A[2]^n,\mu_2),
 \end{equation}
 where $\widetilde{H}^2(\Gamma,\mu_2)$ is the subgroup of $H^2(\Gamma,\mu_2)$ consisting of classes vanishing on restriction to  $0\times G$,  $\textup{res}$ denotes restriction to  $A[2]^n$, and  $\alpha$ is defined in the proof of the cited result. Using the $n$-fold sum of the Weil pairing to identify $A[2]^n$ with  $\textup{Hom}(A[2]^n,\mu_2)$, one checks that  $\alpha$ sends a class $(\mathfrak{t}_i)_{i\in [n]}\in H^1(G,A[2])^n$ to the element 
 \[\sum_{i\in [n]}\mathfrak{t}_i\cup \mathfrak{f}_i.\]
 
 Now suppose we have a linear dependence relation 
 \[\sum_{(i,j)\in \mathcal{S}}\lambda_{ij}\cdot \mathfrak{f}_i\cup \mathfrak{f}_j =0\]
 in $H^2(\Gamma,\mu_2)$, for scalars $\lambda_{ij} \in \mathbb{F}_2$. Restricting to $A[2]^n$ and using the claim we conclude that $\lambda_{ij}=0$ for all $(i,j)\in \mathcal{S}_1$. From this we deduce that 
\[0=\sum_{j\in [n]}\Big(\textstyle{\sum_{i\in [n+m]\setminus [n]}}\lambda_{ij}\mathfrak{f}_i\Big)\cup \mathfrak{f}_j=\alpha\Big(\big(\textstyle{\sum_{i\in [n+m]\setminus [n]} }\lambda_{ij}\mathfrak{a}_i\big)_{j\in [n]}\Big).\]
The result now follows from injectivity of $\alpha$, and the linear independence of the $\mathfrak{a}_i$ as $i$ ranges over elements of $[n+m]\setminus [n]$.
 \end{proof}
 
The importance of the group  $E$ we now introduce comes from \Cref{determinantioN_of_cup_trivi_ext} below. 
 
 \begin{notation} \label{E_n_group} 
 For each $i\in [n+m]\setminus [n]$, fix a $1$-cocycle $a_i$ representing the class of $\mathfrak{a}_i$. Note that since every element of $A[2]$ is defined over $L$, the cocycle $a_i$ necessarily factors through $G$. For each such $i$ we denote by $f_i:\Gamma\rightarrow \mu_2$ the composition $\Gamma\rightarrow G \stackrel{f_i}{\rightarrow}A[2]$, the first map being the natural projection (thus $f_i$ is a $1$-cocycle representing the class of $\mathfrak{f}_i$). With this notation set, define the $2$-cocycle $\eta:\Gamma \rightarrow \mu_2^{\mathcal{S}}$ by 
\[\eta= (f_i\cup f_j)_{(i,j)\in \mathcal{S}} .\]
Denote by $E$ the corresponding central extension of $\Gamma$ by $\mu_2^{\mathcal{S}}$. Explicitly, as a set we have $E=\mu_2^{\mathcal{S}}\times \Gamma$, with the group structure given by 
 \[(\lambda, g)\cdot (\lambda',g')=(\lambda \lambda'\eta(g,g'),gg').\]
 The inclusion/projection maps on the level of sets fit into a short exact sequence 
 \begin{equation} \label{E_nextensionsequence}
 1\longrightarrow \mu_2^{\mathcal{S}}\longrightarrow E\longrightarrow \Gamma\longrightarrow 1,
 \end{equation}
 giving the extension structure on $E$.
 \end{notation}
 
 \begin{lemma} \label{trivial_chi_hom_en}
 Let $\psi:E\rightarrow \mu_2$ be a homomorphism. Then $\psi$ factors through $G$. 
 \end{lemma}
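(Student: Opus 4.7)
The plan is to reduce the problem to two separate vanishing checks: first that $\psi$ annihilates the central subgroup $\mu_2^{\mathcal{S}} \subseteq E$ (so that $\psi$ descends to a homomorphism $\bar{\psi}\colon \Gamma \to \mu_2$), and second that $\bar{\psi}$ annihilates the normal subgroup $A[2]^n \subseteq \Gamma$. Together these force $\psi$ to factor through $\Gamma/A[2]^n = G$.

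For the first step, I would write the restriction $\ell = \psi|_{\mu_2^{\mathcal{S}}}$ as a linear combination $\ell = \sum_{(i,j) \in \mathcal{S}} \lambda_{ij}\, \pi_{ij}$ of the coordinate projections, and push out the extension \eqref{E_nextensionsequence} by $\ell$. This yields a central extension
\[
1 \longrightarrow \mu_2 \longrightarrow E/\ker(\ell) \longrightarrow \Gamma \longrightarrow 1,
\]
whose class in $H^2(\Gamma,\mu_2)$ is $\sum_{(i,j)\in \mathcal{S}} \lambda_{ij}\,(\mathfrak{f}_i \cup \mathfrak{f}_j)$ by construction of $E$ from the $2$-cocycle $\eta$. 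Assuming $\ell \neq 0$, the homomorphism $\psi$ descends to a map $E/\ker(\ell) \to \mu_2$ that restricts to an isomorphism on the central $\mu_2$, thereby splitting the extension. But this would force $\sum_{(i,j)\in \mathcal{S}} \lambda_{ij}\,(\mathfrak{f}_i \cup \mathfrak{f}_j)$ to vanish in $H^2(\Gamma,\mu_2)$, contradicting the linear independence established in \Cref{independent_cups_prop}. Hence $\ell = 0$ and $\psi$ descends to $\bar{\psi}\colon\Gamma\to\mu_2$.

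For the second step, since $\mu_2$ is abelian and $A[2]^n$ is normal in $\Gamma$, invariance of $\bar{\psi}$ under conjugation implies that the restriction $\bar{\psi}|_{A[2]^n}$ is $G$-equivariant with respect to the diagonal action of $G$ on $A[2]^n$ and the trivial action on $\mu_2$. It therefore defines an element of $\textup{Hom}_G(A[2], \mu_2)^n$. By \Cref{existence_of_frob_elements_prop_1}(A), $A[2]$ is a simple $\mathbb{F}_2[G]$-module, so any nonzero $G$-homomorphism $A[2] \to \mu_2$ would have trivial kernel and hence be injective; this is impossible since $\dim_{\mathbb{F}_2} A[2] = 2\dim A > 1$. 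Thus $\bar{\psi}|_{A[2]^n} = 0$ and $\bar{\psi}$ factors through $G$, as desired. The entire argument hinges on \Cref{independent_cups_prop}, which is already in hand, so no further obstacle is anticipated.
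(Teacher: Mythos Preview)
Your proof is correct and follows essentially the same route as the paper: the first step (pushout along $\psi|_{\mu_2^{\mathcal{S}}}$ and appeal to \Cref{independent_cups_prop}) is identical, and for the second step the paper cites \cite[Corollary 3.5]{HS16} to identify the abelianisation of $\Gamma$ with that of $G$, whereas you give the dual argument directly by showing $\textup{Hom}_G(A[2],\mu_2)=0$ via simplicity of $A[2]$. Both versions of step two rest on the same input from \Cref{existence_of_frob_elements_prop_1}(A), so there is no substantive difference.
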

 
 \begin{proof}
Denote by $E'$ the central extension of $\Gamma$ by $\mu_2$ arising as the pushout of the sequence \eqref{E_nextensionsequence} by the restriction of $\psi$ to $\mu_2^{\mathcal{S}}$.
Since $\psi$ is defined on the whole of $E$, the inclusion of $\mu_2$ into $E'$ admits a section. Thus $E'$ is a split extension. On the other hand, the extension class of $E'$ corresponds to the image of (the class of) $\eta$ under the homomorphism 
\[H^2(\Gamma,\mu_2^{\mathcal{S}})\rightarrow H^2(\Gamma,\mu_2)\]
induced by $\psi$. If $\psi$ were non-trivial on restriction to $\mu_2^{\mathcal{S}}$, this image would be a non-trivial linear combination of the classes $\mathfrak{f}_i \cup \mathfrak{f}_j$, for $(i,j)\in \mathcal{S}$, hence non-trivial by  \Cref{independent_cups_prop}, a contradication. Thus $\psi$ has trivial restriction to $\mu_2^{\mathcal{S}}$, hence factors through $\Gamma$. Since moreover the proof of  \cite[Corollary 3.5]{HS16} shows that the abelianisation of $\Gamma$ coincides with the abelianisation of $G$, we conclude that $\psi$  factors through $G$.
 \end{proof}
 
 \subsection{Field extensions defined by trivialising cochains} \label{triv_cochain_extension_sec}
 
We henceforth make the following assumption:

\begin{assumption} \label{assumption_tirvial_cups}
Assume that,  for each $(i,j)\in \mathcal{S}$, we have 
$\mathfrak{a}_i\cup \mathfrak{a}_j=0$ in  $H^2(K,\mu_2).$
\end{assumption}
 
 \begin{notation} \label{trivial_cups_notat_choice}
For  $s=(i,j)\in \mathcal{S}$, fix  a $1$-cochain $\gamma_{s}:G_K\rightarrow \mu_2$ such that $ d\gamma_{s}=a_i\cup a_j$.   Such cochains exist by \Cref{assumption_tirvial_cups}. Recalling that
$a_i=f_i\circ \varphi_T$ for each $i\in [n]$, we see that the map 
$\varphi_{T,T'}:G_K\rightarrow E$
 defined by 
 \[\varphi_{T,T'}(\sigma)=  \Big(\big(\gamma_{s}(\sigma)\big)_{s\in \mathcal{S}}~,~\varphi_T(\sigma)\Big) \]
 is a homomorphism. 
Denote by $K_{T,T'}$ the fixed field of $\ker(\varphi_{T,T'})$, so that we have an induced injection  
 \begin{equation} \label{equation_E_n_extension}
 \varphi_{T,T'}:\textup{Gal}(K_{T,T'}/K)\hookrightarrow E.
 \end{equation}
 \end{notation}

 \begin{proposition} \label{determinantioN_of_cup_trivi_ext}
 The homomorphism \eqref{equation_E_n_extension} is an isomorphism. 
 Moreover, any quadratic subextension of $K_{T,T'}/K$ is contained in $L=K(A[2])$.
 \end{proposition}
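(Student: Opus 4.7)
The plan is to prove surjectivity of $\varphi_{T,T'}$ via a pushout argument that directly invokes the independence statement from \Cref{independent_cups_prop}, and then to deduce the statement about quadratic subextensions from \Cref{trivial_chi_hom_en} by a Galois-correspondence argument.

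First I would set up surjectivity. Write $H \subseteq E$ for the image of $\varphi_{T,T'}$. Composing with the projection $E \twoheadrightarrow \Gamma$ recovers $\varphi_T$, which is surjective by \Cref{iso_to_semi_direct_general}; hence $H$ surjects onto $\Gamma$, and the obstruction to $H = E$ lies entirely in $\mu_2^{\mathcal{S}}$, that is, in whether $H \cap \mu_2^{\mathcal{S}}$ equals all of $\mu_2^{\mathcal{S}}$.

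The main step is to rule out $H \cap \mu_2^{\mathcal{S}} \subsetneq \mu_2^{\mathcal{S}}$. If this inclusion were strict, there would exist a non-trivial character $\psi = (\lambda_s)_{s \in \mathcal{S}} : \mu_2^{\mathcal{S}} \to \mu_2$ vanishing on $H \cap \mu_2^{\mathcal{S}}$. Pushing the central extension \eqref{E_nextensionsequence} forward along $\psi$ yields a central extension
\[
1 \longrightarrow \mu_2 \longrightarrow E_\psi \longrightarrow \Gamma \longrightarrow 1
\]
whose class in $H^2(\Gamma,\mu_2)$ is $\psi_*[\eta] = \sum_{s=(i,j)\in \mathcal{S}} \lambda_s\,(\mathfrak{f}_i \cup \mathfrak{f}_j)$. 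The image of $H$ in $E_\psi$ still surjects onto $\Gamma$, and by the choice of $\psi$ it intersects $\mu_2$ trivially, so by order counting it is a complement to $\mu_2$ in $E_\psi$. But then $E_\psi$ splits, forcing $\psi_*[\eta]=0$, which contradicts \Cref{independent_cups_prop} since the $\lambda_s$ are not all zero. Therefore $H = E$, and $\varphi_{T,T'}$ is an isomorphism. This is the step I expect to be the heart of the argument: everything hinges on the linear independence assertion of \Cref{independent_cups_prop}, which is precisely why that proposition was set up.

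For the second part, let $M/K$ be a quadratic subextension of $K_{T,T'}/K$. It corresponds to a surjective homomorphism $\psi : \textup{Gal}(K_{T,T'}/K) \twoheadrightarrow \mu_2$, and under the isomorphism just established this is a surjection $\psi : E \twoheadrightarrow \mu_2$. By \Cref{trivial_chi_hom_en}, $\psi$ factors through the quotient $E \twoheadrightarrow G$, so $\ker(\psi)$ contains the kernel of $E \to G$. Under the Galois correspondence applied to $K_{T,T'}/K$, the larger subgroup $\ker(\psi)$ cuts out a smaller fixed field than the kernel of $E \to G$ does, and the latter is precisely $L = K(A[2])$. Hence $M \subseteq L$, as required.
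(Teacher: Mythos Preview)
Your proof is correct and follows essentially the same approach as the paper. Both arguments reduce surjectivity to \Cref{independent_cups_prop} and deduce the quadratic-subextension claim from \Cref{trivial_chi_hom_en}; the only difference is packaging---you phrase the contradiction via a pushout of the central extension, while the paper phrases it by showing that a linear dependence among the restricted characters $\chi_s=\textup{res}_{K_T}(\gamma_s)$ would produce a cochain on $\Gamma$ whose coboundary is a nontrivial vanishing combination of the $\mathfrak{f}_i\cup\mathfrak{f}_j$.
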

 
 \begin{proof}  
 Since each cocycle $a_i$ is trivial on restriction to $\textup{Gal}(K_{T,T'}/K_T)$, the restriction of any $\gamma_{s}$ to $\textup{Gal}(K_{T,T'}/K_T)$ is a quadratic character 
 \[\chi_{s}:\textup{Gal}(K_{T,T'}/K_T) \longrightarrow \mu_2.\]
 We have a commutative diagram 
 \begin{equation}   
\begin{tikzcd}
 &  & \textup{Gal}(K_{T,T'}/K)  \arrow[d,"\varphi_{T,T'}"]\arrow[dr, "\varphi_T"]   \\
 1\ar[r]&\mu_2^{\mathcal{S}} \ar[r]& E\ar[r] &\Gamma \ar[r]& 1. 
\end{tikzcd}
\end{equation}
Since $\varphi_T$ is surjective (see \Cref{iso_to_semi_direct_general}), to show that \eqref{equation_E_n_extension} is an isomorphism it suffices to show that the map $\textup{Gal}(K_{T,T'}/K_T)\rightarrow \mu_2^{\mathcal{S}}$ induced by $\varphi_{T,T'}$ is surjective. For this, it suffices to show that the quadratic characters $\chi_{s}$, for $s\in \mathcal{S}$,
 are linearly independent. Suppose otherwise. Then we can find a non-trivial linear combination $\gamma$ of the corresponding cochains $\gamma_{s}$ which vanishes identically on $\textup{Gal}(K_{T,T'}/K_T)$. Using this and the fact that $d\gamma_{s}=a_i\cup a_j$ (where $i$ and $j$ are such that $s=(i,j)$) we deduce that  $\gamma$ factors through $\textup{Gal}(K_T/K)$. Via \Cref{iso_to_semi_direct_general} we can thus view $\gamma$ as a $\mu_2$-valued function on $\Gamma$, and computing the coboundary of $\gamma$ we  deduce that a non-trivial linear combination of the classes $\mathfrak{f}_i\cup \mathfrak{f}_j$, for $(i,j)\in \mathcal{S}$,  vanishes in $H^2(\Gamma_n,\mu_2)$. But this contradicts \Cref{independent_cups_prop}. 
 The claim concerning quadratic subextensions of $K_{T,T'}/K$  now follows from \Cref{trivial_chi_hom_en}.
 \end{proof}
 
 \begin{remark}
In the proof of \Cref{determinantioN_of_cup_trivi_ext}, the key step was to show that the quadratic characters $\{\textup{res}_{K_T}(\gamma_s)~~\colon~~s\in \mathcal{S}\}$ were linearly independent. An alternative approach to this is as follows. By \Cref{assumption_tirvial_cups}, each $a_i\cup a_j$ gives rise to a class in 
\[\ker\Big(H^2(\textup{Gal}(K_T/K),\mu_2)\stackrel{\textup{inf}}{\longrightarrow} H^2(K,\mu_2)\Big)\cong H^1(K_T,\mu_2)^{\textup{Gal}(K_T/K)}/\textup{res}_{K_T}H^1(K,\mu_2),\]
where the isomorphism is provided by the inflation-restriction exact sequence. One can check that, for $s=(i,j)\in \mathcal{S}$, the isomorphism from left to right sends the class of $a_i\cup a_j$ to $\textup{res}_{K_T}(\gamma_s)$. The sought linear  independence is then a direct consequence of \Cref{independent_cups_prop}, which shows that the classes of the $a_i\cup a_j$, for $(i,j)\in \mathcal{S}$, are linearly independent in $H^2(\textup{Gal}(K_T/K),\mu_2)$.
 \end{remark}
 
\Cref{determinantioN_of_cup_trivi_ext} shows that the extension of $K$ cut out by the cochains $\gamma_{s}$ is in some sense as large as possible. It is this result that, through \Cref{existence_of_frob_elements_prop} below, will allow us to combine \Cref{main_ct_comp_prop} with the Chebotarev density  to prove \Cref{key_cassels_tate_prop_4_sel_intro}. 
  
 \begin{notation} \label{notat_phiuhhk} 
Following \Cref{psi_b_b_defi}, denote by $S$ the subset of elements $\sigma$ in $\textup{Gal}(K_{T,T'}/K)$  for which $A[2]^\sigma=0$. For each $s=(i,j)\in \mathcal{S}$,   define  $\psi_{s}:S\rightarrow \frac{1}{2}\mathbb{Z}/\mathbb{Z}$  to be the function which, after composing with the unique isomorphism $\tfrac{1}{2}\mathbb{Z}/\mathbb{Z}\cong \mu_2$, sends $\sigma\in S$ to 
\begin{equation} \label{def_of_psi_2}
(P_{i,\sigma}\cup a_j)(\sigma) +\gamma_{s}(\sigma).
\end{equation}
Here $P_{i,\sigma}$  denotes the unique element of $A[2]$ satisfying $a_i(\sigma)=\sigma P_{i,\sigma}-P_{i,\sigma}$. As previously, we are writing $\mu_2$ additively.
\end{notation}
 


\begin{cor} \label{existence_of_frob_elements_prop}
For each $s\in \mathcal{S}$, fix $\epsilon_{s} \in \{0, \tfrac{1}{2}\}$. Suppose that $g \in G$ is such that $A[2]^g=0$.    Then there is $\sigma \in \textup{Gal}(K_{T,T'}/K)$, whose image in $G$ is $g$, and such that
 \begin{equation}
 \quad \psi_{s}(\sigma)=\epsilon_{s}\quad \textup{ for all }s\in \mathcal{S}.
 \end{equation}
\end{cor}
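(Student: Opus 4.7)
My plan is to leverage the isomorphism $\varphi_{T,T'}\colon \textup{Gal}(K_{T,T'}/K)\stackrel{\sim}{\rightarrow} E$ supplied by \Cref{determinantioN_of_cup_trivi_ext}, which identifies Galois elements with triples in the set $\mu_2^{\mathcal{S}} \times A[2]^n \times G$ whose three coordinates record, respectively, the tuple $(\gamma_s(\sigma))_{s\in\mathcal{S}}$, the tuple $(a_i(\sigma))_{i\in [n]}$, and the image $\bar\sigma\in G$. The key observation is that we have enough room to prescribe the first coordinate (which gives the $\gamma_s(\sigma)$ values directly) while simultaneously forcing the second coordinate to vanish (thereby killing the other contribution to $\psi_s$).

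Concretely, given $g\in G$ with $A[2]^g=0$ and prescribed targets $\epsilon=(\epsilon_s)_{s\in\mathcal{S}}$, I would define $\sigma$ to be the unique element of $\textup{Gal}(K_{T,T'}/K)$ with $\varphi_{T,T'}(\sigma)=(\epsilon,0,g)$; existence and uniqueness are immediate from the bijectivity of $\varphi_{T,T'}$. Then $\bar\sigma=g$ by construction; moreover, since $\sigma$ acts on $A[2]$ through $g$, the hypothesis $A[2]^g=0$ places $\sigma$ in the subset $S$ of \Cref{notat_phiuhhk} and, in particular, guarantees that each $P_{i,\sigma}$ is well-defined because $(\sigma-1)$ then acts invertibly on $A[2]$.

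To finish, I would evaluate $\psi_s(\sigma)$ for each $s=(i,j)\in\mathcal{S}$. Inspecting the description $\mathcal{S}=\mathcal{S}_0\sqcup\mathcal{S}_1$ shows that the second index $j$ always lies in $[n]$, so the vanishing of the $A[2]^n$-coordinate of $\varphi_{T,T'}(\sigma)$ yields $a_j(\sigma)=0$. By bilinearity of the Weil pairing the term $e_2(P_{i,\sigma},a_j(\sigma))$ then vanishes, and the defining formula in \Cref{notat_phiuhhk} collapses to
\[
\psi_s(\sigma)\;=\;e_2(P_{i,\sigma},a_j(\sigma))+\gamma_s(\sigma)\;=\;0+\epsilon_s\;=\;\epsilon_s,
\]
as desired.

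I do not foresee any substantive obstacle: the real content is packaged into \Cref{determinantioN_of_cup_trivi_ext}, which delivers exactly the freedom required, and what remains is a one-line cancellation in the definition of $\psi_s$ made possible by sending the $A[2]^n$-coordinate to zero.
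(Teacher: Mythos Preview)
Your proposal is correct and follows essentially the same approach as the paper: both arguments use the isomorphism of \Cref{determinantioN_of_cup_trivi_ext} to freely prescribe the $\mu_2^{\mathcal{S}}$-coordinate of $\sigma$ over a chosen lift of $g$ in $\Gamma$. The only cosmetic difference is that the paper fixes an arbitrary lift in $\textup{Gal}(K_T/K)$ and then adjusts the $\gamma_s$-values to cancel whatever value $(P_{i,\sigma}\cup a_j)(\sigma)$ happens to take, whereas you pick the specific lift $(0,g)\in A[2]^n\rtimes G$, which kills that term outright and lets you set $\gamma_s(\sigma)=\epsilon_s$ directly; your version is a clean special case of theirs.
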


\begin{proof}
In the expression \eqref{def_of_psi_2}, the quantity 
$(P_{i,\sigma}\cup a_j)(\sigma)$
depends only on the image of $\sigma$ in $\textup{Gal}(K_T/K)$. However, it follows from \Cref{determinantioN_of_cup_trivi_ext} that,  after fixing the image of $\sigma \in \textup{Gal}(K_{T,T}/K)$ in $\textup{Gal}(K_T/K)$, we can still take the tuple $(\gamma_{s}(\sigma))_{s\in \mathcal{S}}$ to be any element of $\mu_2^{\mathcal{S}}$  we desire. 
%
%
%
\end{proof}

\section{Twisting to control the $4$-Selmer group} \label{twisting_4_sel}

We now turn to the proof of \Cref{key_cassels_tate_prop_4_sel_intro}. The final ingredient is a general result that guarantees the existence of global quadratic characters with specified local behaviour.  It is a variant of a result of Klagsbrun--Mazur--Rubin \cite[Proposition 6.8]{MR3043582} (see also \cite[Lemma 8.6]{MR3951582}).

\subsection{Quadratic characters with specified local behaviour}

For this subsection, fix a finite Galois extension $F/K$ with Galois group $\Gamma$. Fix also a finite set $\Sigma$ of places of $K$ containing all places dividing $2\infty$, and all places ramified in $F/K$. 

\begin{notation}
Let $\mathcal{C}\subseteq \Gamma$ be a non-empty union of conjugacy classes and write $\mathcal{P}(\mathcal{C})$ for the set of primes $\mathfrak{p}\notin \Sigma$ for which the Frobenius element $\textup{Frob}_\mathfrak{p}\in \Gamma$ lies in $\mathcal{C}$. Let $\mathcal{A}(\mathcal{C})$ denote the set of quadratic characters $\chi:\Gamma\rightarrow \{\pm 1\}$ satisfying $\chi(\sigma)=1$ for all $\sigma \in \mathcal{C}$. 
\end{notation}

\begin{proposition} \label{approximating_extensions_lemma}
Let  $(\chi_v)_{v\in \Sigma}$ be a collection of quadratic characters $\chi_v:G_{K_v}\rightarrow \mu_2$. Assume that, for every $\psi\in \mathcal{A}(\mathcal{C})$, we have
\begin{equation} \label{reciprocity_condition}
\sum_{v\in \Sigma}\textup{inv}_v(\chi_v\cup \textup{res}_v(\psi))=0.
\end{equation}
Then there is a global quadratic character $\chi:G_K\rightarrow \mu_2$ such that $\textup{res}_v(\chi)=\chi_v$ for all $v\in \Sigma$, and such that $\chi$ is unramified outside $\Sigma \cup \mathcal{P}(\mathcal{C})$.
\end{proposition}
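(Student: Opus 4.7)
The plan is to combine Poitou--Tate duality with the Chebotarev density theorem, in the spirit of \cite[Proposition 6.8]{MR3043582}. For each finite subset $T\subseteq \mathcal{P}(\mathcal{C})$, set
\[H^1_{\Sigma\cup T}(K,\mu_2)=\{\chi\in H^1(K,\mu_2)\colon \chi \text{ unramified outside } \Sigma\cup T\}\]
and consider the restriction
\[r_T\colon H^1_{\Sigma\cup T}(K,\mu_2)\longrightarrow \prod_{v\in \Sigma}H^1(K_v,\mu_2).\]
Any preimage under $r_T$ of the prescribed tuple $(\chi_v)_{v\in \Sigma}$ is automatically unramified outside $\Sigma\cup T\subseteq \Sigma\cup \mathcal{P}(\mathcal{C})$ and so would deliver the conclusion of the proposition. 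It therefore suffices to find $T$ such that $(\chi_v)_{v\in \Sigma}$ lies in the image of $r_T$.

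By the standard Poitou--Tate machinery applied to the self-dual Galois module $\mu_2$ (using that $H^1_{\textup{ur}}(K_v,\mu_2)$ is its own annihilator under the local Tate pairing at places $v\nmid 2\infty$), this image coincides with the annihilator of
\[\textup{Sel}_T^{\vee}:=\bigl\{\psi\in H^1(K,\mu_2)\colon \psi \text{ unramified outside } \Sigma,~ \textup{res}_v(\psi)=0 \text{ for all } v\in T\bigr\}\]
under the pairing $\bigl((\chi_v)_v,\psi\bigr)\mapsto \sum_{v\in \Sigma}\textup{inv}_v\bigl(\chi_v\cup \textup{res}_v(\psi)\bigr)$; the contributions from $v\notin \Sigma$ vanish either because both factors are unramified or because $\textup{res}_v(\psi)=0$. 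So it is enough to pick $T\subseteq \mathcal{P}(\mathcal{C})$ finite for which $\textup{Sel}_T^{\vee}$ equals the inflation of $\mathcal{A}(\mathcal{C})$ to $H^1(K,\mu_2)$: the hypothesis \eqref{reciprocity_condition} will then kill the Poitou--Tate obstruction.

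The group $U:=\textup{Sel}_\emptyset^{\vee}$ of quadratic $G_K$-characters unramified outside $\Sigma$ is finite, by Kummer theory combined with the finite generation of $\mathcal{O}_{K,\Sigma}^{\times}$. Inflation embeds $\mathcal{A}(\mathcal{C})\hookrightarrow U$, and conversely every $\psi\in \mathcal{A}(\mathcal{C})$ vanishes at every prime in $\mathcal{P}(\mathcal{C})$ (its Frobenius lies in $\mathcal{C}$), so $\mathcal{A}(\mathcal{C})\subseteq \textup{Sel}_T^{\vee}$ for any $T\subseteq \mathcal{P}(\mathcal{C})$. For each $\psi\in U\setminus \mathcal{A}(\mathcal{C})$, I would use Chebotarev applied to the compositum $FF_\psi/K$, where $F_\psi$ is the quadratic extension cut out by $\psi$, to produce a prime $\mathfrak{p}_\psi\in \mathcal{P}(\mathcal{C})$ with $\textup{res}_{\mathfrak{p}_\psi}(\psi)\neq 0$. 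If $F_\psi\subseteq F$, then $\psi$ factors through $\Gamma$ and, not lying in $\mathcal{A}(\mathcal{C})$, is nontrivial on some $\sigma\in \mathcal{C}$; any prime with $\textup{Frob}_{\mathfrak{p}}|_F=\sigma$ works. Otherwise $\textup{Gal}(FF_\psi/K)$ is a central extension of $\Gamma$ by $\mathbb{Z}/2$ (central since $\textup{Aut}(\mathbb{Z}/2)=1$) on which $\psi$ is nontrivial on the kernel; for each $\sigma\in \mathcal{C}$ its two lifts to $\textup{Gal}(FF_\psi/K)$ differ by the central generator, so exactly one evaluates to $1$ under $\psi$, and these distinguished lifts, as $\sigma$ ranges over $\mathcal{C}$, form a union of conjugacy classes (since $\psi$ is a homomorphism to an abelian group). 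Chebotarev then supplies $\mathfrak{p}_\psi$.

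Setting $T=\{\mathfrak{p}_\psi\colon \psi\in U\setminus \mathcal{A}(\mathcal{C})\}$ forces $\textup{Sel}_T^{\vee}=\mathcal{A}(\mathcal{C})$, and the required global $\chi$ exists by the Poitou--Tate step above. The main obstacle I anticipate is the second Chebotarev case: one must verify simultaneously that the distinguished set of Frobenius candidates in $\textup{Gal}(FF_\psi/K)$ is nonempty and stable under conjugation. Both rely on the centrality of the kernel $\textup{Gal}(FF_\psi/F)\cong \mathbb{Z}/2$, which is automatic here but is an essential structural input; the nonemptiness uses the splitting of lifts into "$\psi=0$" and "$\psi=1$" halves, while stability uses that $\psi$ lands in an abelian group.
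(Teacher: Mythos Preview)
Your proof is correct and follows essentially the same approach as the paper: Poitou--Tate duality identifies the obstruction to lifting $(\chi_v)_{v\in\Sigma}$ as a Selmer-type group, which Chebotarev then cuts down to $\mathcal{A}(\mathcal{C})$. The paper applies Poitou--Tate directly to the infinite set $\Sigma\cup\mathcal{P}(\mathcal{C})$ via restricted products, whereas you first pass to a finite subset $T$ using finiteness of $\mathcal{O}_{K,\Sigma}^\times/\mathcal{O}_{K,\Sigma}^{\times 2}$; your Chebotarev step on the compositum $FF_\psi$ is also more explicit than the paper's one-line appeal, but the content is identical. (One wording quibble: in the $F_\psi\not\subseteq F$ case you presumably mean to single out the lifts on which $\psi$ takes the \emph{nontrivial} value, since those are the Frobenii witnessing $\textup{res}_{\mathfrak{p}}(\psi)\neq 0$.)
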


\begin{proof}
Exactness at the middle term of the Poitou--Tate exact sequence (see, for example, \cite[Theorem I.4.10]{MR2261462}) applied to the set $\Sigma \cup  \mathcal{P}(\mathcal{C})$ of places, and to the self-dual $G_K$-module $\mu_2$,  shows that 
\[\text{im}\Big(H^1(K_{\Sigma \cup  \mathcal{P}(\mathcal{C})}/K,\mu_2) \stackrel{\textup{res}}{\longrightarrow}\sideset{}{'}\prod_{v\in \Sigma \cup  \mathcal{P}(\mathcal{C})} H^1(K_v,\mu_2)\Big)\]  is its own  orthogonal complement under the sum of the local Tate pairings. Here $K_{\Sigma \cup  \mathcal{P}(\mathcal{C})}$ denotes the maximal extension of $K$ unramified outside $\Sigma \cup \mathcal{P}(\mathcal{C})$, and the restricted direct product is taken with respect to the subgroups of  unramified classes. Projecting onto $\prod_{v\in \Sigma} H^1(K_v,\mu_2)$, it follows formally that the image of $H^1(K_{\Sigma \cup \mathcal{P}(\mathcal{C})}/K,\mu_2)$  in $\prod_{v\in \Sigma} H^1(K_v,\mu_2)$ is the orthogonal complement  of the image of
\begin{equation} \label{poitou_tate_kernel}
\ker\Big(H^1(K_{\Sigma }/K,  \mu_2)\stackrel{\textup{res}}{\longrightarrow }\sideset{}{'}\prod_{v\in \mathcal{P}(\mathcal{C})}H^1(K_v, \mu_2)\Big)
\end{equation}
 in $\prod_{v\in \Sigma}H^1(K_v,  \mu_2)$. Now let $\psi \in H^1(K_\Sigma/K,\mu_2)$. If either $\psi$ does not factor through $\Gamma$, of if there is $\sigma \in \mathcal{C}$ with $\psi(\sigma)=-1$, then by   the Chebotarev density theorem   we can find a prime $\mathfrak{p}\in \mathcal{P}(\mathcal{C})$ such that the restriction of $\chi$ to $G_{K_\mathfrak{p}}$ is non-trivial. From this we conclude that the kernel in  \eqref{poitou_tate_kernel} is equal to $\mathcal{A}(\mathcal{C})$, giving the result. 
\end{proof}

\subsection{The proof of \Cref{key_cassels_tate_prop_4_sel_intro}}

For the rest of this section we assume that $A$ satisfies parts $(A)$ and $(B)$ of \Cref{existence_of_frob_elements_prop_1}.
The following definition axiomatises the structure that the Cassels--Tate pairing on $\textup{Sel}^2(A/K)$ has; see \Cref{ctp_is_admissible} below. Recall that the class $\mathfrak{c}$ of \Cref{ssec:class_c}  necessarily lies in $\textup{Sel}^2(A/K)$. 

\begin{defi}
We say that a bilinear pairing \[P:\textup{Sel}^2(A/K)\times \textup{Sel}^2(A/K)\longrightarrow \tfrac{1}{2}\mathbb{Z}/\mathbb{Z}\] is \textit{admissible} if:
\[2P(\mathfrak{c},\mathfrak{c})\equiv  \dim \textup{Sel}^2(A/K)+\textup{rk}_2(A/K)\quad (\textup{mod } 2\mathbb{Z}),\]
and  
\begin{equation} \label{eq:PS_relation}
\phantom{hihihihihi}\quad \quad\quad \quad P(\mathfrak{a},\mathfrak{a})=P(\mathfrak{a},\mathfrak{c}) \quad \quad \textup{ for all }\mathfrak{a}\in \textup{Sel}^2(A/K).
\end{equation}
 This final relation implies, in particular, that $P$ is antisymmetric.
%
\end{defi}

%
%
%

\begin{lemma} \label{ctp_is_admissible}
Let $\chi:G_K\rightarrow \mu_2$ be a quadratic character such that \[\textup{rk}_2(A^\chi/K)\equiv \textup{rk}_2(A/K) \pmod 2,\] and such that, as subgroups of $H^1(K,A[2])$, we have $\textup{Sel}^2(A^\chi/K)=\textup{Sel}^2(A/K)$. Then $\textup{CTP}_\chi$ is an admissible pairing on $\textup{Sel}^2(A^\chi/K)$.
\end{lemma}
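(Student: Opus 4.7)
The plan is to reduce the admissibility of $\textup{CTP}_\chi$ to the admissibility of the Cassels--Tate pairing for $A^\chi$ itself, which is a direct application of the Poonen--Stoll results of \cite{MR1740984}, and then to translate between the invariants of $A^\chi$ and the invariants of $A$ using the two hypotheses in the statement.

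First I would recall from \Cref{ssec:class_c} that, under the identification \eqref{2_tors_ident} of $A[2]$ with $A^\chi[2]$, the class $\mathfrak{c}$ for $A^\chi$ is identified with the class $\mathfrak{c}$ for $A$. This, together with the fact (recalled in \Cref{ssec:class_c}) that $A^\chi$ is principally polarised, means that \cite[Theorem 5]{MR1740984} applies verbatim to $A^\chi$ and yields
\[\textup{CTP}_\chi(\mathfrak{a},\mathfrak{a})=\textup{CTP}_\chi(\mathfrak{a},\mathfrak{c})\qquad \text{for all } \mathfrak{a}\in \textup{Sel}^2(A^\chi/K),\]
which is exactly the second (Poonen--Stoll) admissibility relation.

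For the parity relation I would apply \cite[Theorem 8]{MR1740984} to $A^\chi$, which, just as it delivered admissibility for $\textup{CTP}$ on $\textup{Sel}^2(A/K)$, here delivers
\[2\,\textup{CTP}_\chi(\mathfrak{c},\mathfrak{c})\equiv \dim \textup{Sel}^2(A^\chi/K)+\textup{rk}_2(A^\chi/K)\pmod{2\mathbb{Z}}.\]
The two hypotheses of the lemma now enter: $\textup{Sel}^2(A^\chi/K)=\textup{Sel}^2(A/K)$ immediately gives equality of dimensions, and $\textup{rk}_2(A^\chi/K)\equiv \textup{rk}_2(A/K)\pmod 2$ replaces the $2^\infty$-Selmer rank. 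Substituting into the congruence above yields
\[2\,\textup{CTP}_\chi(\mathfrak{c},\mathfrak{c})\equiv \dim \textup{Sel}^2(A/K)+\textup{rk}_2(A/K)\pmod{2\mathbb{Z}},\]
which is the first admissibility condition, with the invariants of $A$ on the right-hand side as required by the definition.

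I expect no serious obstacle. The lemma is essentially a bookkeeping statement: Poonen--Stoll's theorems apply to any principally polarised abelian variety, so they apply to $A^\chi$ as readily as to $A$, and the two displayed hypotheses are tailored precisely so that the resulting $A^\chi$-admissibility reads as $A$-admissibility in the sense of the definition. The only point worth verifying carefully is the identification of $\mathfrak{c}$-classes under \eqref{2_tors_ident}, which is already recorded in \Cref{ssec:class_c}.
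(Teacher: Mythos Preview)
Your proposal is correct and follows essentially the same route as the paper: apply \cite[Theorem 5]{MR1740984} for the relation $\textup{CTP}_\chi(\mathfrak{a},\mathfrak{a})=\textup{CTP}_\chi(\mathfrak{a},\mathfrak{c})$, apply \cite[Theorem 8]{MR1740984} for the parity statement, and then substitute using the two hypotheses. The only small point the paper makes explicit that you leave implicit is the use of $A(K)[2]=0$ (a consequence of the standing \Cref{existence_of_frob_elements_prop_1}(A)) to pass from $\dim\Sha_{\textup{nd}}(A^\chi/K)[2]$, which is what \cite[Theorem 8]{MR1740984} actually controls, to $\dim\textup{Sel}^2(A^\chi/K)-\textup{rk}_2(A^\chi/K)$.
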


\begin{proof}
  That \eqref{eq:PS_relation} holds for $\textup{CTP}_\chi$ follows from \cite[Theorem 5]{MR1740984}. Further, since $A(K)[2]=0$,  since $\textup{rk}_2(A^\chi/K)$ is assumed to have the same parity as $\textup{rk}_2(A/K)$, and since $\dim \textup{Sel}^2(A/K)=\dim \textup{Sel}^2(A^\chi/K)$, we deduce that 
\[\dim \Sha_{\textup{nd}}(A^\chi/K)[2]\equiv \dim\textup{Sel}^2(A/K) + \textup{rk}_2(A/K) \pmod 2,\]
where $ \Sha_{\textup{nd}}(A^\chi/K)$ denotes the quotient of $\Sha(A^\chi/K)$ by its maximal divisible subgroup.
Combining this with \cite[Theorem 8]{MR1740984}  we see that
\[\textup{CTP}_{\chi}(\mathfrak{c},\mathfrak{c})=\frac{\dim\textup{Sel}^2(A^\chi/K) + \textup{rk}_2(A/K)}{2}~\in \mathbb{Q}/\mathbb{Z},\]
as desired.
\end{proof}

\begin{theorem}[=\Cref{key_cassels_tate_prop_4_sel_intro}] \label{key_cassels_tate_prop_4_sel}
Suppose that $\textup{Sel}^2(A/K)\cap H^1(G,A[2])$ is generated by the class $\mathfrak{c}$. Let $P$ be an admissible pairing on $\textup{Sel}^2(A/K)$. Then there is a quadratic character $\chi:G_K\rightarrow \mu_2$ such that all of the following hold:
\begin{itemize}
\item[(i)] $\textup{rk}_2(A^{\chi}/K)\equiv \textup{rk}_2(A/K) \pmod 2,$
\item[(ii)] $\textup{Sel}^2(A^{\chi}/K)=\textup{Sel}^2(A/K)$ as subgroups of $H^1(K,A[2])$,
\item[(iii)] for all $\mathfrak{a},\mathfrak{b}\in \textup{Sel}^2(A^{\chi}/K)$ we have 
\[\textup{CTP}_{\chi}(\mathfrak{a},\mathfrak{b})=P(\mathfrak{a},\mathfrak{b}).\]
\end{itemize}
\end{theorem}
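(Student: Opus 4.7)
The strategy is to reduce matters to \Cref{main_ct_comp_prop} by engineering a quadratic character $\chi$ whose ramified primes contribute a prescribed local term to the Cassels--Tate difference. The first step is to choose cocycles $a_1,\ldots,a_{n+m}$ representing a basis $\mathfrak{a}_1,\ldots,\mathfrak{a}_{n+m}$ of $\textup{Sel}^2(A/K)$, with the first $n$ projecting to a basis of $\textup{Sel}^2(A/K)/\bigl(\textup{Sel}^2(A/K)\cap H^1(G,A[2])\bigr)$ and the last $m$ spanning $\textup{Sel}^2(A/K)\cap H^1(G,A[2])$; by hypothesis $m\in\{0,1\}$, and when $m=1$ we take $\mathfrak{a}_{n+1}=\mathfrak{c}$. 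Since each $\textup{res}_v(\mathfrak{a}_i)$ lies in the maximal isotropic $\mathscr{S}_v(A)$ of the local Tate pairing, $\textup{res}_v(\mathfrak{a}_i\cup\mathfrak{a}_j)=0$ in $H^2(K_v,\mu_2)$ at every $v$, and Hasse--Brauer--Noether yields $\mathfrak{a}_i\cup\mathfrak{a}_j=0$ in $H^2(K,\mu_2)$, so \Cref{assumption_tirvial_cups} is met. Fix trivializing cochains $\gamma_s$ as in \Cref{trivial_cups_notat_choice}, and let $\Sigma$ be a finite set of places containing those dividing $2N_A\infty$ together with every place where some $\gamma_s$ ramifies.

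Applying \Cref{ctp_is_admissible} with the trivial character shows that $\textup{CTP}$ is admissible, so the difference $Q:=P-\textup{CTP}$ is a symmetric bilinear $\tfrac{1}{2}\mathbb{Z}/\mathbb{Z}$-valued pairing on $\textup{Sel}^2(A/K)$ satisfying $Q(\mathfrak{a},\mathfrak{a})=Q(\mathfrak{a},\mathfrak{c})$ for every $\mathfrak{a}$ and $Q(\mathfrak{c},\mathfrak{c})=0$. Setting $Q_s:=Q(\mathfrak{a}_i,\mathfrak{a}_j)\in\mu_2$ for $s=(i,j)\in\mathcal{S}$, these values together with admissibility determine $Q$ on the entire basis. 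By part (B) of \Cref{existence_of_frob_elements_prop_1} there exists $g\in G$ with $A[2]^g=0$, so \Cref{existence_of_frob_elements_prop} applied with $\epsilon_s=Q_s$ produces $\sigma\in\textup{Gal}(K_{T,T'}/K)$ lying over $g$ with $\psi_s(\sigma)=Q_s$ for every $s\in\mathcal{S}$; let $\mathcal{C}$ be its conjugacy class in $\textup{Gal}(K_{T,T'}/K)$.

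Feeding this into \Cref{approximating_extensions_lemma} with $\chi_v=1$ for every $v\in\Sigma$ (so that \eqref{reciprocity_condition} is automatic) produces a global quadratic character $\chi$ that is trivial on $\Sigma$ and unramified outside $\Sigma\cup\mathcal{P}(\mathcal{C})$. Provided $\chi$ has been chosen so that it ramifies at an odd number of primes of $\mathcal{P}(\mathcal{C})$, \Cref{main_ct_comp_prop} applied to the pair $(a_i,a_j)$ and the cochain $\gamma_s$ yields
\[
\textup{CTP}(\mathfrak{a}_i,\mathfrak{a}_j)+\textup{CTP}_\chi(\mathfrak{a}_i,\mathfrak{a}_j)\;=\;\sum_{v\in\textup{Ram}(\chi)\setminus\Sigma}\psi_{a_i,a_j}(\textup{Frob}_v)\;=\;Q_s
\]
for each $s\in\mathcal{S}$, since $\psi_{a_i,a_j}$ is constant equal to $Q_s$ on $\mathcal{C}$. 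Combined with \Cref{selmer_agreement_lemma}, which provides (i) and (ii), this gives $\textup{CTP}_\chi(\mathfrak{a}_i,\mathfrak{a}_j)=P(\mathfrak{a}_i,\mathfrak{a}_j)$ on every pair in $\mathcal{S}$; symmetry of both pairings together with the admissibility relation (valid for $\textup{CTP}_\chi$ again by \Cref{ctp_is_admissible}) forces agreement on the remaining diagonal and transposed pairs, so (iii) follows by bilinearity.

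The main technical obstacle is the parity requirement on the number of primes of $\mathcal{P}(\mathcal{C})$ at which $\chi$ ramifies, because \Cref{approximating_extensions_lemma} as stated only yields \emph{some} character with the specified behaviour at $\Sigma$. I expect this to be overcome by refining the Poitou--Tate argument in the proof of that lemma to additionally prescribe a ramified local quadratic character at one auxiliary prime $\mathfrak{q}\in\mathcal{P}(\mathcal{C})$: the relevant reciprocity condition remains trivial because for any $\psi\in\mathcal{A}(\mathcal{C})$ the restriction $\textup{res}_\mathfrak{q}(\psi)$ is unramified and evaluates trivially at $\textup{Frob}_\mathfrak{q}\in\mathcal{C}$. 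An alternative would be to enlarge $\mathcal{C}$ by a second conjugacy class on which all $\psi_s$ vanish (whose existence is again guaranteed by \Cref{existence_of_frob_elements_prop}) and use the extra degree of freedom to adjust the parity at $\mathcal{C}$ without disturbing the Chebotarev sum.
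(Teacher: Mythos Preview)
Your overall strategy and setup match the paper's, and you have correctly isolated the one genuine obstacle: controlling the parity of $|\textup{Ram}(\chi)\cap\mathcal{P}(\mathcal{C})|$. However, neither of your proposed fixes resolves it as written. For fix (a), adjoining a prime $\mathfrak{q}\in\mathcal{P}(\mathcal{C})$ to $\Sigma$ and forcing $\chi$ to be ramified there guarantees at least one ramified prime in $\mathcal{P}(\mathcal{C})$, but \Cref{approximating_extensions_lemma} still gives no control over how many \emph{other} primes of $\mathcal{P}(\mathcal{C})$ land in $\textup{Ram}(\chi)$; the total parity is as uncontrolled as before. Fix (b) is closer in spirit but too vague: merely permitting extra ramification in a second class where all $\psi_s$ vanish does nothing to constrain the parity of ramification in the original class $\mathcal{C}$.

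The paper's resolution is to \emph{decouple} the two roles rather than try to control parity. One first uses Chebotarev together with \Cref{existence_of_frob_elements_prop} to pick a single prime $\mathfrak{p}\notin\Sigma_0$ with $A[2]^{\textup{Frob}_\mathfrak{p}}=0$ and $\psi_s(\textup{Frob}_\mathfrak{p})=Q_s$ for every $s\in\mathcal{S}$. One then takes $\mathcal{C}$ to be the set of $\sigma\in\textup{Gal}(K_{T,T'}/K)$ whose image in $G$ lies in the conjugacy class of the image of $\textup{Frob}_\mathfrak{p}$ \emph{and} for which $\psi_s(\sigma)=0$ for all $s$; this is nonempty by another application of \Cref{existence_of_frob_elements_prop}. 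Now apply \Cref{approximating_extensions_lemma} with $\Sigma=\Sigma_0\cup\{\mathfrak{p}\}$, prescribing $\chi$ trivial on $\Sigma_0$ and \emph{ramified} at $\mathfrak{p}$. The reciprocity condition at $\mathfrak{p}$ is exactly the one you checked: any $\beta\in\mathcal{A}(\mathcal{C})$ factors through $G$ by \Cref{determinantioN_of_cup_trivi_ext}, and since $\textup{Frob}_\mathfrak{p}$ and the elements of $\mathcal{C}$ share the same image in $G$, one gets $\beta(\textup{Frob}_\mathfrak{p})=1$. Applying \Cref{main_ct_comp_prop} with the set $\Sigma_0$, the prime $\mathfrak{p}$ contributes exactly $Q_s$ to the sum, while every other prime in $\textup{Ram}(\chi)\setminus\Sigma_0$ lies in $\mathcal{P}(\mathcal{C})$ and contributes $0$ regardless of how many there are. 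The parity issue disappears entirely; there is nothing to adjust.
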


\begin{proof}
Take $T=\{\mathfrak{a}_1,...,\mathfrak{a}_n\}$ to be a subset of $\textup{Sel}^2(A/K)$ projecting to a basis for the quotient of $\textup{Sel}^2(A/K)$ by the subspace generated by $\mathfrak{c}$. For consistency with \Cref{ssec:ext_classes}, if $\mathfrak{c}\neq 0$ then we define $\mathfrak{a}_{n+1}:=\mathfrak{c}$. Take $T'=\{\mathfrak{a}_{n+1}\}$ if $\mathfrak{c}\neq 0$, and take $T'=\emptyset$ otherwise.  For each $i$, let $a_i:G_K\rightarrow A[2]$ be a $1$-cocycle representing the class of $\mathfrak{a}_i$. Let $\mathcal{S}$ be defined from $T$ and $T'$ as in \Cref{E_n_group}. For each $s=(i,j)\in \mathcal{S}$, fix a $1$-cochain $\gamma_{s}:G_K\rightarrow \mu_2$ such that  $d\gamma_{s}=a_i\cup a_j$. Let $ K_{T,T'}/K$ be the extension defined from these choices as in \Cref{triv_cochain_extension_sec}. Similarly, for each $s\in \mathcal{S}$,  let $\psi_s$  be the function defined from these choices as in \Cref{notat_phiuhhk}.

With this notation set, let $\Sigma_0$ be a finite set of places containing all places dividing $2N_A\infty$  and all places ramified in $K_{T,T'}/K$.  As is possible by \Cref{existence_of_frob_elements_prop} and the Chebotarev density theorem, fix a prime $\mathfrak{p}\notin \Sigma_0$  such that $A[2]^{\textup{Frob}_\mathfrak{p}}=0$, and such that 
\[\quad\quad \quad \psi_{s}(\textup{Frob}_\mathfrak{p})=\textup{CTP}(\mathfrak{a}_i,\mathfrak{a}_j)+P(\mathfrak{a}_i,\mathfrak{a}_j)\quad \quad \quad \textup{ for all }s=(i,j)\in \mathcal{S}.\]
 Write $\Sigma=\Sigma_0\cup \{\mathfrak{p}\}$, and let $\mathcal{C}$ be the subset of $\textup{Gal}(K_{T,T'}/K)$ consisting of those elements $\sigma$ whose image in $G$ lies in the same conjugacy class as $\textup{Frob}_\mathfrak{p}$, but for which 
$\psi_{s}(\sigma)=0$ for all $s\in \mathcal{S}$.  
 By \Cref{existence_of_frob_elements_prop} we see that $\mathcal{C}$ is a non-empty union of conjugacy classes of $\textup{Gal}(K_{T,T'}/K)$. Denote by $\mathcal{P}$ the set of primes  $\mathfrak{p}\notin \Sigma$ for which $\textup{Frob}_\mathfrak{p}$ lies in $\mathcal{C}$. We claim that, using \Cref{approximating_extensions_lemma}, we can find a quadratic character $\chi:G_K\rightarrow \mu_2$ such that:
 \begin{itemize}
 \item $\textup{res}_v(\chi)$ is trivial for all places $v\in \Sigma_0$,
 \item $\textup{res}_\mathfrak{p}(\chi)$ is a given non-trivial ramified quadratic characters of $K_\mathfrak{p}$,
 \item $\chi$ is unramified outside $\Sigma \cup \mathcal{P}$.
 \end{itemize}
 Indeed, denoting by $\alpha$ a non-trivial ramified character of $G_{K_\mathfrak{p}}$, the condition  to be checked  is that 
 \[\textup{inv}_\mathfrak{p}(\alpha \cup \textup{res}_\mathfrak{p}(\beta))=0 \]
 for every homomorphism $\beta:\textup{Gal}(K_{T,T'}/K) \rightarrow \mu_2$ vanishing on $\mathcal{C}$. Take any such $\beta$. Since $\mathfrak{p}$ is chosen to be unramified in $K_{T,T'}/K$, we have 
$\textup{inv}_\mathfrak{p}(\alpha \cup \textup{res}_\mathfrak{p}(\beta))=0$  if and only if $\beta(\textup{Frob}_\mathfrak{p})=1.$
 Now $\beta$ factors through $G$ by \Cref{determinantioN_of_cup_trivi_ext}, and by construction the image of $\textup{Frob}_\mathfrak{p}$ in $G$ is contained in the image of $\mathcal{C}$ in $G$. Hence $\beta$ vanishes  on $\textup{Frob}_\mathfrak{p}$, and we have proven the claim.
 
We now show that $\chi$ satisfies the conditions of the statement. That is satisfies parts (i) and (ii)  follows from  \Cref{selmer_agreement_lemma}. Next, by \Cref{main_ct_comp_prop} we see that, for each $s=(i,j)\in \mathcal{S}$, we have
\[\textup{CTP}_\chi(\mathfrak{a}_i,\mathfrak{a}_j)=P(\mathfrak{a}_i,\mathfrak{a}_j) .\]
Since any admissible pairing on $\textup{Sel}^2(A/K)$ is determined by its values on pairs $(\mathfrak{a}_i,\mathfrak{a}_j)$ for $(i,j)\in \mathcal{S}$, the result  follows from \Cref{ctp_is_admissible} and admissibility of $P$. 
\end{proof}

\section{Parity of $2^\infty$-Selmer ranks} \label{parity section}

 \Cref{key_cassels_tate_prop_4_sel} shows that we can vary the Cassels--Tate pairing whilst keeping   $\textup{Sel}^2(A/K)$ and the parity of $\textup{rk}_2(A/K)$ fixed. For the application to \Cref{main_thm} it will be necessary to vary the parity of $\textup{rk}_2(A/K)$ in a controlled way also.  In this section we prove the results required to do this, building on those of \cite{MR3951582}. We remark that in the case of Jacobians of hyperelliptic curves, one can use \cite[Theorem 1.1]{mor2023} together with an easy root number computation as a substitute for the results of this section.
 
 We do not assume that $A$ satisfies any parts of \Cref{existence_of_frob_elements_prop_1} in this section.
 
 \subsection{Local invariants controlling the parity of $2^\infty$-Selmer ranks}

\begin{notation} \label{local_omega_invs}
 For each place $v \mid 2N_A\infty$, and each quadratic character $\chi:G_{K_v}\rightarrow \mu_2$, let $\Omega_v(\chi)$ be the invariant defined  in \cite[Definition 10.21]{MR3951582} (see also \cite[Definitions 5.15 and Lemma 10.8]{MR3951582}). It takes values in $\{\pm 1\}$. Per \cite[Proposition 5.16 (1)]{MR3951582}, if $\chi$ is trivial then $\Omega_v(\chi)=1$.
\end{notation}
%
%

It is shown in \cite{MR3951582} that the local terms $\Omega_v(\chi)$ govern the parity of $2^\infty$-Selmer ranks of quadratic twists of $A/K$. Specifically, we have the following:

\begin{theorem}\label{parity_2_infinity}
Let $\chi:G_K\rightarrow \mu_2$ be a  quadratic character. Then we have 
\[(-1)^{\textup{rk}_2(A^\chi/K)}=(-1)^{\textup{rk}_2(A/K)}\cdot \prod_{\substack{v\mid 2N_A\infty\\\textup{res}_v(\chi)\textup{ non-trivial}}}\Omega_v(\textup{res}_v(\chi)).\]
\end{theorem}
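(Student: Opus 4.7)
The plan is to deduce this formula as a direct application of the parity machinery developed in \cite{MR3951582}. The invariants $\Omega_v(\chi)$ are engineered in that work precisely to measure the local contribution to the change in parity of the $2^\infty$-Selmer rank under quadratic twist, so the present theorem should be essentially a book-keeping consequence of the main result of that paper.

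First, I would invoke \cite[Theorem 10.20]{MR3951582}, which gives a global formula expressing $(-1)^{\textup{rk}_2(A^\chi/K) - \textup{rk}_2(A/K)}$ as a product of local terms, a priori indexed by a finite set of places depending on the ramification of $\chi$. Conceptually, that result is obtained by comparing the Selmer structures for $A$ and $A^\chi$ via the exact sequence associated to the restriction of scalars isogeny $\textup{R}_{F/K}A \to A\times A^\chi$ (where $F/K$ is the quadratic extension cut out by $\chi$), exactly as in the input to \Cref{main_ct_comp_prop}, together with Poitou--Tate duality. The local contribution at a place $v$ is, by construction, $\Omega_v(\textup{res}_v(\chi))$.

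Next, I would trim the product down to places dividing $2N_A\infty$. For any place $v$ at which $\chi$ restricts to the trivial character, the remark after \Cref{local_omega_invs} gives $\Omega_v(\textup{res}_v(\chi)) = 1$. The only places remaining to check are those $v \nmid 2N_A\infty$ at which $\chi$ is ramified. At such $v$, the residue characteristic is odd and $A$ has good reduction, so by \Cref{selmer_conds_lemma}(ii)--(iii) the subgroups $\mathscr{S}_v(A) = H^1_{\textup{ur}}(K_v,A[2])$ and $\mathscr{S}_v(A,\textup{res}_v(\chi))$ are maximal isotropic in $H^1(K_v,A[2])$ with trivial intersection; a direct local computation in the same spirit as the proof of \Cref{selmer_conds_lemma} then shows $\Omega_v(\textup{res}_v(\chi)) = 1$ in this case as well. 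This is also consistent with (and implicitly used by) the parity statement already invoked in the proof of \Cref{selmer_agreement_lemma}.

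The main obstacle, such as it is, lies in producing the initial global parity formula; but this deep input is really the content of \cite{MR3951582}, which is being imported as a black box. Once that is in hand, the remaining task is the local vanishing of $\Omega_v(\textup{res}_v(\chi))$ at good-reduction places of odd residue characteristic when $\chi$ is locally ramified, which is essentially a restatement of \Cref{selmer_conds_lemma}(iii) combined with the fact that the $2$-Selmer groups of $A$ and $A^\chi$ already agree locally up to a shift by a maximal isotropic.
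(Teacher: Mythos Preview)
Your core instinct is right: the theorem is a direct consequence of \cite[Theorem 10.20]{MR3951582}, and that is exactly what the paper does. However, you have slightly mischaracterised what that result gives and then done extra work to compensate. The cited theorem is stated for the parity of $\textup{rk}_2(A/F)$, where $F/K$ is the quadratic extension cut out by $\chi$; its product of local terms is already indexed by places $v\mid 2N_A\infty$ (indeed, per \Cref{local_omega_invs} the invariants $\Omega_v$ are only defined at such places). The paper's entire argument is then the one-line observation
\[
\textup{rk}_2(A/F)=\textup{rk}_2(A/K)+\textup{rk}_2(A^\chi/K),
\]
which converts the parity of $\textup{rk}_2(A/F)$ into that of $\textup{rk}_2(A^\chi/K)-\textup{rk}_2(A/K)$. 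You omit this step and instead suppose the product in \cite{MR3951582} runs over places depending on the ramification of $\chi$, which it does not.

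Consequently your second paragraph, the ``trimming'' at places $v\nmid 2N_A\infty$ where $\chi$ is ramified, is unnecessary. It is also not well-founded as written: the quantity $\Omega_v(\textup{res}_v(\chi))$ is not defined at those places, and even if one extended the definition via \eqref{om_v_local_def}, the fact that $\mathscr{S}_v(A)$ and $\mathscr{S}_v(A,\textup{res}_v(\chi))$ are transverse maximal isotropics (\Cref{selmer_conds_lemma}(iii)) does not by itself compute the invariant $\mathfrak{g}(A,\lambda,\chi)$ or the cokernel of the norm map. So while your overall route reaches the right destination by invoking the right black box, the paper's path via the rank identity over $F$ is both shorter and avoids the spurious local analysis.
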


\begin{proof}
The case when $\chi$ is trivial is clear. Suppose that $\chi$ is non-trivial, and denote by $F/K$ the corresponding quadratic extension. Noting that \[\textup{rk}_2(A/F)=\textup{rk}_2(A/K)+\textup{rk}_2(A^\chi/K),\]
 the result follows from \cite[Theorem 10.20]{MR3951582}.
\end{proof}

The key consequence of \Cref{parity_2_infinity}, as was used in the proof of \Cref{selmer_agreement_lemma}, is that  the parity of $\textup{rk}_2(A^\chi/K)$ depends only on the restriction of $\chi$ to the places dividing $2N_A\infty$. As a result, the precise form of the local terms $\Omega_v(\textup{res}_v(\chi))$ is often not of great importance. However, for our applications to Kummer varieties,  it will be necessary to know conditions under which  $\Omega_v(\chi)$ is non-trival. For this, the following proposition will suffice.

\begin{proposition} \label{omega_prop}
Suppose that $v\nmid 2\infty$ is a place of $K$ such that $A$ has semistable reduction at $v$, and such that the N\'{e}ron component group $\Phi(A/K_v)$ has odd order. Let $\chi$ be the unique non-trivial unramified quadratic character of $G_{K_v}$. Then we have
\[\Omega_v(\chi)=(-1)^{\mathfrak{f}(A/K_v)},\]
where $\mathfrak{f}(A/K_v)$ denotes the conductor exponent of $A$ at $v$.
\end{proposition}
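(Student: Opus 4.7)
The plan is to unravel the definition of $\Omega_v(\chi)$ from \cite[Definition 10.21]{MR3951582} in the special case at hand, and show that it reduces to a toric rank computation. The setup hypotheses, namely $v\nmid 2$, semistable reduction, odd component group, and unramified $\chi$, are exactly the ones that make this definition degenerate to its simplest form: by \Cref{selmer_conds_lemma} (ii), the Kummer subspace $\mathscr{S}_v(A)$ equals the unramified subgroup $H^1_{\textup{ur}}(K_v,A[2])$, and the same holds for $A^\chi$ since unramified quadratic twists preserve semistability, the component group order, and unramifiedness of the Kummer condition. Thus the ``twisting discrepancy'' at $v$ that $\Omega_v(\chi)$ measures collapses to something depending only on the reduction.

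Next I would invoke Grothendieck's formula to rewrite the exponent: for a semistable abelian variety, the conductor exponent $\mathfrak{f}(A/K_v)$ equals the toric rank $t$ of the identity component of the Néron special fiber, since the inertia acts trivially on $V_\ell A$ modulo the toric piece. In particular $(-1)^{\mathfrak{f}(A/K_v)} = (-1)^t$. The goal is then to match $\Omega_v(\chi) = (-1)^t$. Here the $\chi$-twist has a concrete effect on the reduction: on the toric part $T$ of $\mathcal{A}^0_{k_v}$, twisting $A$ by the unique unramified quadratic character amounts to twisting the Frobenius action on the cocharacter lattice $X_*(T)$ by the sign character, while leaving the abelian and fixed parts of the filtration essentially unchanged (up to contributions controlled by the odd component group hypothesis, which kill any $2$-torsion defect).

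Plugging this into the local formula from \cite[Definition 10.21, Lemma 10.8]{MR3951582}, the contribution from the abelian and fixed parts is trivial (they either cancel between $A$ and $A^\chi$, or contribute a square), and all that remains is a determinant computation on $X_*(T)\otimes\mathbb{F}_2$ comparing Frobenius and its twist by $\chi$, which yields $(-1)^{\text{rk}\, X_*(T)} = (-1)^t$. Alternatively, one can argue via compatibility with local root numbers: the parity formula of \Cref{parity_2_infinity} combined with the known behaviour of $2$-Selmer parity under unramified twist of a semistable abelian variety (e.g.\ via the Poitou--Tate--style arguments in \cite{MR3951582}, or via the Kramer--Tunnell type formula in the semistable case) pins down $\Omega_v(\chi)$ uniquely, and the constraint matches $(-1)^t$.

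The main obstacle will be the definitional one: \cite[Definition 10.21]{MR3951582} is built on top of \cite[Definitions 5.15 and Lemma 10.8]{MR3951582}, and one must carefully verify that the auxiliary choices entering those definitions (local conditions, splittings of the $2$-descent sequence, etc.) all trivialise in the presence of the odd component group hypothesis, so that only the toric contribution survives. Once that bookkeeping is done, the identification with $(-1)^{\mathfrak{f}(A/K_v)}$ is essentially immediate from Grothendieck's conductor--toric rank equality.
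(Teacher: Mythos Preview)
Your proposal does not engage with the actual definition of $\Omega_v(\chi)$. Per \cite[Definition 10.21]{MR3951582} (recalled in \eqref{om_v_local_def}), one has
\[
\Omega_v(\chi)=(-1)^{2\,\textup{inv}_v\,\mathfrak{g}(A,\lambda,\chi)+\dim A(K_v)/N_{E_\chi/K_v}A(E_\chi)},
\]
where $\mathfrak{g}(A,\lambda,\chi)\in H^2(K_v,\mu_2)$ is the Brauer class of \Cref{g_defi_long}, built from a quadratic refinement $q$ of the Weil pairing, the cocycle $c_q$, the cochains $\rho_q,\rho_{q,\chi}$, and the function $F_q$. None of this involves Kummer images $\mathscr{S}_v(A)$, cocharacter lattices, or determinant computations on $X_*(T)\otimes\mathbb{F}_2$; your proposed mechanism simply does not match the object being computed. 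The ``alternative'' via root numbers is circular in this context: the identification $\Omega_v(\chi)=w(A/F_v)$ (\Cref{parity_2_infinity} plus a Kramer--Tunnell type input) is exactly what the proposition, together with \cite[Corollary 4.6]{MR3860395}, establishes.

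The paper's argument supplies precisely the ingredients you are missing. First, the norm cokernel term vanishes by \Cref{mazur_towers_lemma}. Then, semistability forces $I_v$ to act on $A[2]$ through a finite cyclic quotient, so one may choose an $I_v$-invariant quadratic refinement $q$; with this choice $c_q$ is unramified, and oddness of $\Phi(A/K_v)$ makes $A(K_v^{\textup{ur}})$ $2$-divisible, allowing $\rho_q$ and $\rho_{q,\chi}$ to be chosen unramified as well. The term $(\rho_q+\rho_{q,\chi})\cup c_q$ then lies in $H^2(\textup{Gal}(K_v^{\textup{ur}}/K_v),\mu_2)=0$, so the entire Brauer class reduces to $\textup{inv}_v(\chi\cup(F_q-\eta))$, which is governed by $\textup{res}_{I_v}(F_q)$. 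The Dickson invariant computation (\Cref{dickson_hom_lemma}) identifies this with the parity of $\dim A[2]^{I_v}$, and \cite[Lemma 4.2]{MR3860395} matches that parity with $\mathfrak{f}(A/K_v)$. Your intuition that everything collapses to the toric rank is correct at the level of the final answer, but the route there runs through $F_q$ and the Dickson map, not through the objects you describe.
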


\begin{remark}
Let $F/K_v$ denote the unique unramified extension of degree $2$. By \cite[Corollary 4.6]{MR3860395}, under the assumptions of \Cref{omega_prop}, the local root number $w(A/F_v)$ of $A$ over $F_v$ is equal to $(-1)^{\mathfrak{f}(A/K_v)}$. Thus \Cref{omega_prop} can be interpreted as saying that the local invariant $\Omega_v(\chi)$ is equal to the local root number of $A$ over $F_v$. 
\end{remark}

The proof of \Cref{omega_prop} will occupy the rest of the section.

\subsection{Recollections from \cite{MR3951582}}   We begin by recalling the relevant notions from  \cite{MR3951582}. For the rest of the section, fix a nonarchimedean place $v\nmid 2$ of $K$. Further, we take the following notation:
 
 \begin{notation}
 For a non-trivial quadratic character $\chi:G_{K_v}\rightarrow \mu_2$, take $\mathfrak{g}(A,\lambda,\chi)$ to be the class in the Brauer group of $K_v$ defined in \cite[Definitions 5.15]{MR3951582} (recall that $\lambda$ denotes the fixed principal polarisation on $A$).   Further, denoting by $E_\chi$ the  quadratic extension of $K_v$ corresponding to $\chi$,   we denote by
\[N_{E_\chi/K_v}:A(E_\chi)\longrightarrow A(K_v)\]
the norm (or trace) map sending $x\in A(E_\chi)$ to $\sum_{g\in \textup{Gal}(E_\chi/K_v)}\sigma x$.
 \end{notation}
 
 Per \cite[ Definition 10.21]{MR3951582}, for any quadratic character $\chi:G_{K_v}\rightarrow \mu_2$, we have 
 \begin{equation} \label{om_v_local_def}
 \Omega_v(\chi)=(-1)^{2\textup{inv}_v\mathfrak{g}(A,\lambda,\chi)+\dim A(K_v)/N_{E_\chi/K_v}A(E_\chi)}.
 \end{equation}
 In the case of interest, the contribution from the cokernel of the norm map is trivial.
 \begin{lemma} \label{mazur_towers_lemma}
Let $\chi:G_{K_v}\rightarrow \mu_2$ be the unique unramified quadratic character of $K_v$, and suppose that $\Phi(A/K_v)$ has odd order. Then we have $\dim A(K_v)/N_{E_\chi/K_v}A(E_\chi)=0$.
\end{lemma}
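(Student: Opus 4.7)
The plan is to show that the norm map $N_{E_\chi/K_v}:A(E_\chi)\to A(K_v)$ is surjective, or equivalently that the Tate cohomology group $\hat{H}^0(G, A(E_\chi))=A(K_v)/N_{E_\chi/K_v}A(E_\chi)$ vanishes, where $G=\textup{Gal}(E_\chi/K_v)$ is cyclic of order $2$ and $k_F$ denotes the residue field of $E_\chi$ (which is the quadratic extension of $k_v$, since $E_\chi/K_v$ is unramified). I would attack this by filtering $A(E_\chi)$ using the N\'eron model $\mathcal{A}/\mathcal{O}_{K_v}$ and computing the relevant Tate cohomology piece by piece.

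First I would use that the N\'eron model commutes with unramified base change to get the exact sequence of $G$-modules
\[
0\longrightarrow A_0(E_\chi)\longrightarrow A(E_\chi)\longrightarrow \Phi(A/K_v)^{G_{k_F}}\longrightarrow 0,
\]
where $A_0(E_\chi)=\mathcal{A}^0(\mathcal{O}_{E_\chi})$. The rightmost term is a subgroup of $\Phi(A/K_v)$ and hence has odd order by hypothesis; since $|G|=2$, all Tate cohomology groups $\hat{H}^i(G,\Phi(A/K_v)^{G_{k_F}})$ vanish (they are annihilated by both $|G|=2$ and by the odd order of the module). Thus the long exact sequence reduces the problem to showing $\hat{H}^0(G,A_0(E_\chi))=0$.

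For this, I would exploit the further short exact sequence
\[
0\longrightarrow A_0(E_\chi)^1\longrightarrow A_0(E_\chi)\longrightarrow \bar{\mathcal{A}}^0(k_F)\longrightarrow 0,
\]
where $A_0(E_\chi)^1$ is the kernel of reduction (surjectivity on the right follows from smoothness of $\mathcal{A}^0$ and Hensel's lemma). Since $v\nmid 2$, the group $A_0(E_\chi)^1$ is a pro-$p$ group with $p$ odd, hence uniquely $2$-divisible, and so $\hat{H}^i(G,A_0(E_\chi)^1)=0$ for all $i$. The group $\bar{\mathcal{A}}^0$ is a smooth connected commutative algebraic group over the finite field $k_v$, so Lang's theorem gives $H^1(G_{k_v},\bar{\mathcal{A}}^0(\bar{k}_v))=0$, whence $H^1(G,\bar{\mathcal{A}}^0(k_F))=0$ (e.g.\ via inflation--restriction). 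Because $\bar{\mathcal{A}}^0(k_F)$ is finite and $G$ is cyclic, the Herbrand quotient argument forces $\hat{H}^0(G,\bar{\mathcal{A}}^0(k_F))=0$ as well. Combining the vanishings in the long exact sequence yields $\hat{H}^0(G,A_0(E_\chi))=0$ and the result follows.

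The main conceptual point—and the only mild obstacle—is the treatment of $\bar{\mathcal{A}}^0$, which is not necessarily proper nor necessarily an extension of an abelian variety by a torus (since we are not explicitly assuming semistable reduction in the lemma itself); but Lang's theorem applies to any smooth connected commutative algebraic group over a finite field, so this causes no difficulty. The rest of the argument is a routine devissage along the standard filtration by the N\'eron model.
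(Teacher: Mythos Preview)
Your proof is correct. The paper's own proof is simply a citation to Mazur \cite[Prop.~4.2, Prop.~4.3]{MR0444670}, and what you have written is essentially a self-contained reproduction of Mazur's argument: the same d\'evissage along the N\'eron filtration, with the component group killed by the odd-order hypothesis, the formal part by unique $2$-divisibility (residue characteristic odd), and the connected special fibre handled by Lang's theorem plus the Herbrand quotient. So the route is not genuinely different---you have unpacked the reference rather than invoking it. Your observation that Lang's theorem needs no semistability assumption is correct and worth making, since the lemma as stated does not carry that hypothesis even though the surrounding proposition does.
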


\begin{proof}
This is a consequence of \cite[Prop. 4.2, Prop. 4.3]{MR0444670}.
\end{proof}

%


%

In \Cref{g_defi_long} below, we recall the definition of the  class $\mathfrak{g}(A,\lambda,\chi)$ from \cite[Definitions 5.13 and 5.15]{MR3951582}. We begin with the following notation.

\begin{notation}
Given  a quadratic refinement $q$ of the Weil pairing on $A[2]$,  denote by $c_q:G_{K_v}\rightarrow A[2]$ the corresponding $1$-cocycle representing the class $\mathfrak{c}$, defined as in \cite[Definition 3.1]{MR3951582}. 
Further,  denote by $F_q: G_{K_v} \rightarrow \mu_2$ the function defined in \cite[Definition 5.11]{MR3951582}, viewed as a $\mu_2$-valued $1$-cochain. Per \cite[Remark 5.1]{MR3951582}, we have the cocycle identity
\begin{equation} \label{dcupqc}
dF_q=c_q\cup c_q. 
\end{equation}
\end{notation} 

For the proof of \Cref{omega_prop} we will need the following lemma, which is implicit in \cite{MR3951582}.

\begin{lemma} \label{dickson_hom_lemma}  
Let $q$ be a quadratic refinement of the Weil pairing on $A[2]$, and suppose that $\sigma \in G_{K_v}$ preserves $q$. Then
\[F_q(\sigma)=(-1)^{\dim A[2]^\sigma}.\] 
\end{lemma}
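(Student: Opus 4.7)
The plan is to exploit the defining identity $dF_q = c_q \cup c_q$ together with the vanishing of $c_q$ on the stabiliser of $q$. Since $c_q(\sigma)$ corresponds, under the non-degenerate Weil pairing, to the $\mathbb{F}_2$-linear form $\sigma q - q$ on $A[2]$, we have $c_q(\sigma) = 0$ whenever $\sigma$ preserves $q$, and in fact $c_q \cup c_q$ vanishes identically on pairs of elements of the stabiliser $H := \textup{Stab}_{G_{K_v}}(q)$. Hence $dF_q|_H = 0$, and since $\mu_2$ carries trivial $G_{K_v}$-action, $F_q|_H$ is a group homomorphism $H \to \mu_2$.

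Next, I would identify this homomorphism. The action of $H$ on $A[2]$ factors through the finite orthogonal group $O(A[2], q)$ of automorphisms preserving both $q$ and the Weil pairing. Since $\dim A[2] = 2d$ is even, rank-nullity gives
\[ \dim A[2]^\tau \;\equiv\; \textup{rank}(\tau - 1) \pmod{2} \]
for every $\tau \in O(A[2], q)$, so the right-hand side is the classical Dickson invariant and $\tau \mapsto (-1)^{\dim A[2]^\tau}$ is a homomorphism $O(A[2], q) \to \mu_2$. Thus the assertion reduces to showing that the homomorphism $F_q|_H$ coincides with the pullback of the Dickson character along $H \to O(A[2], q)$.

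For this final identification, the plan is to unwind the explicit formula for $F_q$ given in \cite[Definition 5.11]{MR3951582}. Substituting the vanishing $c_q(\sigma) = 0$ should reduce that formula to a concrete expression depending only on $q$ and the action of $\sigma$ on $A[2]$; one then evaluates it on convenient generators of $O(A[2], q)$, such as transvections, by working in a symplectic basis adapted to the decomposition $A[2] = A[2]^\sigma \oplus (\sigma - 1)A[2]$. The main obstacle is precisely this last step: the computation requires careful tracking of the normalisation conventions underlying $F_q$ in \cite{MR3951582}, and it is on these sign-level details that the proof stands or falls. Everything else in the argument is essentially formal.
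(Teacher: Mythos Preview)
Your plan is structurally the same as the paper's: reduce to showing that $F_q$, restricted to elements preserving $q$, coincides with the Dickson invariant of $O(q)$, and then identify the Dickson invariant with $(-1)^{\dim A[2]^{\sigma}}$. The paper, however, dispatches both steps by citation rather than direct computation. For the identification $F_q(\sigma)=d_q(\sigma)$ when $\sigma$ preserves $q$, it simply invokes \cite[Proposition 3.9]{MR3951582} (together with the definition of $F_q$ in \cite[Definition 5.11]{MR3951582}); and for the formula $d_q(\sigma)=(-1)^{\dim A[2]^{\sigma}}$ it cites \cite[Theorem 3]{Dye77}. So the step you single out as ``the main obstacle''---unwinding the explicit cochain $F_q$ and checking on generators---is already carried out in the reference from which $F_q$ is taken, and your preliminary verification that $F_q|_H$ is a homomorphism via $dF_q=c_q\cup c_q$ is not needed once one cites that proposition. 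Your rank--nullity observation is exactly the content of the Dye reference.

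One small terminological point: the natural generators of $O(q)$ in characteristic $2$ are orthogonal transvections (reflections in anisotropic vectors), not symplectic transvections; if you do carry out the check by hand, those are the elements to evaluate on.
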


\begin{proof}
By assumption, $\sigma$ acts on $A[2]$ through the orthogonal group $O(q)$ of automorphisms preserving $q$. Denote by $d_q:O(q)\rightarrow \mu_2$ the Dickson homomorphism, whose kernel is the special orthogonal group $SO(q)$. Per \cite[Proposition 3.9]{MR3951582} (and the definition of the map $F_q$; see \cite[Definition 5.11]{MR3951582}), we have $F_q(\sigma)=d_q(\sigma)$.
The result now follows from \cite[Theorem 3]{Dye77}.
\end{proof}

The class $\mathfrak{g}(A,\lambda,\chi)$ is defined as follows.  We refer to \cite[Section 5.8]{MR3951582}  for justification that the choices involved can be made, and that the result is independent of these choices.

\begin{defi} \label{g_defi_long}
Let $\chi:G_{K_v}\rightarrow \mu_2$ be a quadratic character, with corresponding quadratic extension $E_\chi/K_v$. Choose a quadratic refinement $q$  of the Weil pairing on $A[2]$. Choose $P_q\in A(\overline{K_v})$ such that $dP_q=c_q$, and pick $Q_q\in A(\overline{K_v})$ with $2Q_q=P_q$. Set $\rho_q=dQ_q$; this is a $1$-cocycle valued in $A[4]$. Repeating the above with $A$ replaced by $A^\chi$ yields a $1$-cocycle $\rho_{q,\chi}$ valued in $A^\chi[4]$, which we reinterpret as a $1$-cochain valued in $A[4]$  via the isomorphism between $A$ and $A^\chi$ over $E_\chi$. Having done this, the sum $\rho_q+\rho_{\chi,q}$  takes values in $A[2]$, and has coboundary $\chi \cup c_q$. Then
\begin{equation} \label{gqdefi}
(\rho_q+\rho_{\chi,q})\cup c_q - \chi \cup F_q 
\end{equation}
  is a $2$-cocycle valued in $\mu_2$, and  $\mathfrak{g}(A,\lambda,\chi)$ is defined to be its class in $H^2(K_v,\mu_2)$. 
\end{defi}


\subsection{Proof of \Cref{omega_prop}}

We now give  the proof of  \Cref{omega_prop}.
 Fix $\chi$ to be the unique non-trivial unramified quadratic character of $K_v$.  From \eqref{om_v_local_def} and \Cref{mazur_towers_lemma}  we have 
\begin{equation} \label{omega_rsimp_eq}
\Omega_v(\chi)=(-1)^{2\textup{inv}_v \mathfrak{g}(A,\lambda,\chi)}.
\end{equation}
 Since $K_v$ has odd residue characteristic, and since $A$ has semistable reduction at $v$, the inertia group $I_v$ of $K_v$ acts on $A[2]$ through a  finite  cyclic quotient. We thus see from \cite[Proposition 3.6 (a)]{MR2915483} that there is an $I_v$-stable quadratic refinement $q$ of the Weil pairing on $A[2]$, which we fix henceforth.
 
\textbf{Claim:}
 We can make choices so that, in the notation of \Cref{g_defi_long}, the resulting $2$-cochain
\[(\rho_q+\rho_{\chi,q})\cup c_q\]
factors through $\textup{Gal}(K_v^{\textup{ur}}/K_v)$.

\textbf{Proof of claim:} 
Since $q$ is $I_v$-invariant, the cocycle $c_q$ factors through $\textup{Gal}(K_v^{\textup{ur}}/K_v)$. Pick $P_q$  as in \Cref{g_defi_long}. Since $dP_q=c_q$, we see that $P_q$ is defined over $K_v^{\textup{ur}}$.  Further, since $\Phi(A/K_v)$ has odd order, $A(K_v^{\textup{ur}})$ is $2$-divisible. In particular, we can choose $Q_q\in A(K_v^\textup{ur})$ so that $2Q_q=P_q$. The corresponding cocycle $\rho_q=dQ_q$ then factors through $\textup{Gal}(K_v^{\textup{ur}}/K_v)$. Since $A$ and $A^\chi$ are isomorphic over the maximal unramified extension of $K_v$, and since N\'{e}ron models commute with unramified base-change,  $\Phi(A^\chi/K_v)$ has odd order also. Thus we can similarly make choices to ensure that the cochain $\rho_{\chi,q}$ factors through $\textup{Gal}(K_v^{\textup{ur}}/K_v)$, proving the claim. \qed

With choices made as in the claim, the class $\mathfrak{g}(A,\lambda,\chi)$  is by definition represented by the $\mu_2$-valued $2$-cocycle 
 \[ (\rho_q+\rho_{\chi,q})\cup c_q- \chi \cup F_q.\]
 Since   $H^2(\textup{Gal}(K_v^{\textup{ur}}/K_v),\mu_2)$ is trivial (see \cite[Example in Chapter 3]{NSW08}), and since $c_q$ is unramified, we can choose a $1$-cochain $\eta:\textup{Gal}(K_v^{\textup{ur}}/K_v) \rightarrow \mu_2$ such that $d\eta=c_q\cup c_q$.   The difference $F_q-\eta$ is then a quadratic character, and we have 
 \[(\rho_q+\rho_{\chi,q})\cup c_q-\chi \cup F_q=\big((\rho_q+\rho_{\chi,q})\cup c_q+\chi \cup \eta\big) + \chi \cup (F_q-\eta).\]
 The first bracketed term on the right hand side is a $2$-cocycle valued in $\mu_2$ which, by the claim and our choice of $\eta$, factors through $\textup{Gal}(K_v^{\textup{ur}}/K_v)$. Thus it is a $2$-coboundary. Consequently, we have
 \[\textup{inv}_v\mathfrak{g}(A,\lambda,\chi)=\textup{inv}_{v}(\chi \cup (F_q-\eta))=\begin{cases} 0~~&~~\textup{res}_{I_v}(F_q)\textup{ is trivial,}\\ 1/2~~&~~\textup{otherwise.}\end{cases} \]
 Here for the second equality we note that $\chi$ is unramified, and that the functions $F_q$ and $F_q-\eta$ have the same restriction to $I_v$. 
Since $I_v$ preserves $q$,  and acts on $A[2]$ through a finite cyclic quotient, we see from  \Cref{dickson_hom_lemma} that $\textup{res}_{I_v}(F_q)$ is trivial  if and only if $\dim A[2]^{I_v}$ is even.
Thus 
  \[2\textup{inv}_{v}\mathfrak{g}(A,\lambda,\chi)\equiv  \dim A[2]^{I_{v}}\textup{ (mod }2\mathbb{Z}).\]
 Finally, since $A$ has semistable reduction at $v$, and since $\Phi(A/K_v)$ has odd order, we see from  e.g.  \cite[Lemma 4.2]{MR3860395} that
  \[   \dim A[2]^{I_{v}}\equiv \mathfrak{f}(A/K_v) \pmod 2.\]
This completes the proof of  \Cref{omega_prop}. \qed

\section{Proof of \Cref{main_thm}} \label{twisting_4_sel_c}
 
For this section, we assume that $A$ satisfies all three parts of \Cref{existence_of_frob_elements_prop_1}. Let $\mathfrak{a}$ be an element of $H^1(K,A[2])$ satisfying \Cref{assumption:local_solubility}.
 
 \subsection{Field extensions trivialising a given cocycle class} \label{field_ext_trivl_subsec}

Take $T=\{\mathfrak{a}\}$, let $K_T/K$ be the extension defined from $T$ as in \Cref{ssec:cocyclefieldext}, and denote by $\varphi_T$ the corresponding isomorphsim
\begin{equation} \label{iso_semidirect_special_case}
\varphi_T:\textup{Gal}(K_T/K)\stackrel{\sim}{\longrightarrow} A[2]\rtimes G
\end{equation}
afforded by \Cref{iso_to_semi_direct_general}.

\begin{notation}
Let $H$ denote the subgroup of $\textup{Gal}(K_{T}/K)$ consisting of all elements $\sigma$ for which $a(\sigma)=0$. Under the isomorphism \eqref{iso_semidirect_special_case} it corresponds to the subgroup $0\times G$ of $A[2]\rtimes G$. 
\end{notation}

The following lemma  is an analogue of (a special case of ) \cite[Proposition 3.10]{HS16}.

\begin{lemma} \label{2_torsor_trivial_lemma}
Let $\mathcal{C}$ be the set of $\sigma \in \textup{Gal}(K_{T}/K)$ such that $\mathfrak{a}$ is trivial on restriction to the cyclic subgroup generated by $\sigma$. Then we have 
\[\mathcal{C}=\cup_{\sigma \in \textup{Gal}(K_{T}/K)}\sigma H \sigma^{-1}.\] Moreover, given any non-trivial homomorphism $\chi:\textup{Gal}(K_{T}/K) \rightarrow \mu_2$, there is $\sigma \in \mathcal{C}$ such that $\chi(\sigma)=-1$.
\end{lemma}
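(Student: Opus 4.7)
The plan is to exploit the isomorphism $\varphi_T : \textup{Gal}(K_T/K) \to A[2] \rtimes G$ from \eqref{iso_semidirect_special_case}, under which an element $\sigma$ corresponds to the pair $(a(\sigma), \bar\sigma)$ and $H$ corresponds to the subgroup $0 \times G$, where $a$ is the fixed $1$-cocycle representing $\mathfrak{a}$. The first assertion then reduces to a direct calculation in the semidirect product, and the moreover assertion follows from the observation (based on Assumption \ref{existence_of_frob_elements_prop_1}(B)) that $A[2] \times 1$ lies in the derived subgroup of $A[2] \rtimes G$.

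For the description of $\mathcal{C}$, note that $\mathfrak{a}$ is trivial on $\langle \sigma \rangle$ if and only if $a(\sigma) \in (\sigma - 1) A[2]$, since a $1$-cocycle on a cyclic group is a coboundary exactly when its value at the generator lies in the image of $\sigma - 1$. Writing $\sigma \leftrightarrow (v, g)$ with $v = a(\sigma)$, the conjugation identity
\[
(P, 1) \cdot (0, g) \cdot (P, 1)^{-1} = \bigl((g - 1)P,\, g\bigr) \qquad \text{in } A[2] \rtimes G,
\]
valid since $A[2]$ is $2$-torsion, shows that if $v = (g - 1) P$ for some $P \in A[2]$, then $\sigma \in (P, 1)\, H\, (P, 1)^{-1}$. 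The reverse implication is the same calculation in the opposite direction: if $\sigma = \tau (0, g') \tau^{-1}$ with $\tau = (w, h)$, then one reads off $a(\sigma) = (g - 1) w$, where $g = h g' h^{-1} = \bar\sigma$, which manifestly lies in $(\sigma - 1) A[2]$. This establishes the equality $\mathcal{C} = \bigcup_\tau \tau H \tau^{-1}$.

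For the moreover statement, the strategy is to show that every homomorphism $\chi : \textup{Gal}(K_T/K) \to \mu_2$ vanishes on $A[2] \times 1$, so that $\chi$ factors through the quotient $(A[2] \rtimes G)/(A[2] \times 1) \cong G$ and is therefore determined by its restriction to $H \cong 0 \times G$. The key computation here is the commutator identity
\[
\bigl[(v, 1),\, (0, g)\bigr] = \bigl((g - 1) v,\, 1\bigr)
\]
in $A[2] \rtimes G$. Combined with part (B) of \Cref{existence_of_frob_elements_prop_1}, which produces $g \in G$ with $A[2]^g = 0$ and hence $g - 1$ an automorphism of $A[2]$, this places $A[2] \times 1$ inside the commutator subgroup, and thus inside the kernel of any homomorphism to an abelian group. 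Non-triviality of $\chi$ then forces non-triviality of $\chi|_H$, and any $\sigma \in H \subseteq \mathcal{C}$ at which $\chi$ takes the value $-1$ does the job. The main conceptual step in the argument is the identification of $A[2] \times 1$ with a subgroup of the derived subgroup; the remaining work is formal manipulation in the semidirect product.
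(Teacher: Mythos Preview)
Your proof is correct. Both parts are handled by direct computation in the semidirect product $A[2]\rtimes G$, and the calculations you give are accurate; in particular the conjugation and commutator identities check out once one remembers that $A[2]$ is $2$-torsion.

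The paper's argument is organised a little differently. For the first assertion it uses the torsor interpretation: $H$ is the stabiliser of the neutral element of the $A[2]$-torsor $Z_{\mathfrak a}$, the condition that $\mathfrak a$ restrict trivially to $\langle\sigma\rangle$ is exactly that $\sigma$ fix some $\overline K$-point of $Z_{\mathfrak a}$, and transitivity of the $\textup{Gal}(K_T/K)$-action (a consequence of the isomorphism \eqref{iso_semidirect_special_case}) makes all point-stabilisers conjugate to $H$. Your coordinate computation is the same content unwound explicitly. For the second assertion the paper invokes the fact, drawn from \cite[Corollary~3.5]{HS16}, that the abelianisation of $A[2]\rtimes G$ coincides with that of $G$; the proof there rests on part~(A) of \Cref{existence_of_frob_elements_prop_1} (simplicity of $A[2]$ forces $\sum_g(g-1)A[2]=A[2]$). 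You instead use part~(B): the existence of $g$ with $A[2]^g=0$ makes $g-1$ an automorphism, so your commutator identity already exhibits every element of $A[2]\times 1$ as a single commutator. This is a pleasant, self-contained alternative that avoids the external reference, at the cost of trading one hypothesis for another; since both (A) and (B) are in force in this section, either route is available.
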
  

\begin{proof} 
Let $Z_{\mathfrak{a}}$ denote the $A[2]$-torsor corresponding to $\mathfrak{a}$; its $\overline{K}$ points are  equal to $A[2]$ as a set, but with $\sigma \in G_K$ acting on $x\in A[2]$ by  
\[  \sigma \cdot x=a(\sigma)+\sigma(x) .\]
This action factors through $\textup{Gal}(K_{T}/K)$, and $H$ is the stabiliser  of the neutral element of $A[2]$ in $\textup{Gal}(K_{T}/K)$. 
The condition that $\mathfrak{a}$ is trivial on restriction to the subgroup generated by  some $\sigma \in \textup{Gal}(K_{T}/K)$ is equivalent to the condition that $\sigma $ fix a $\overline{K}$-point of $Z_{\mathfrak{a}}$. As in \cite[Proof of Corollary 3.7]{HS16}, the fact that \eqref{iso_semidirect_special_case} is an isomorphism means that $\textup{Gal}(K_{T}/K)$ acts transitively on $Z_{\mathfrak{a}}(\overline{K})$, hence the stabilisers of any $2$ points  are conjugate. This gives the claimed description of $\mathcal{C}$. For the second part, we wish to show that $\chi$ does not vanish on $H$. To see this, note first that from \Cref{iso_to_semi_direct_general}, $\chi $ factors through $G$. On the other hand, under the isomorphism \eqref{iso_semidirect_special_case},  $H$ corresponds to the subgroup of $A[2]\rtimes G$  of elements of the form $(0,\sigma)$ for $\sigma\in G$. In particular, any homomorphism that is    trivial on $H$ and factors through $G$ is  necessarily trivial on the whole of $\textup{Gal}(K_{T}/K)$. 
\end{proof}

\subsection{Twists for which $\mathfrak{a}$ lies in the $2$-Selmer group}

Denote by $X$ the generalised Kummer variety associated to $\mathfrak{a}$, defined as in  \cite[Section 6]{HS16}. Similarly, let $Y$ denote the $2$-covering of $A$ corresponding to $\mathfrak{a}$, with corresponding involution $\iota$, and let $\widetilde{Y}$ be as in \cite[Section 6]{HS16}. Thus  $\widetilde{Y}$ is the blow up of $Y$ at the fixed points of $\iota$, the involution $\iota$ extends to an involution $\widetilde{\iota}$ on $\widetilde{Y}$, and we have $X=\widetilde{Y}/\widetilde{\iota}$. Let $p: \widetilde{Y}\rightarrow X$ denote the quotient morphism. For any place $v$ of $K$, and quadratic character $\chi$ of $K_v$, we denote by $Y^\chi$ and $p_\chi:\widetilde{Y}^\chi \rightarrow X^\chi$ the corresponding $K_v$-varieties arising from viewing $\textup{res}_v(\mathfrak{a})$ as an element of $H^1(K_v,A^\chi[2])$. As explained in  \cite[Section 6]{HS16}  we have a $K_v$-isomorphism $X^\chi \cong X$. Recall from \Cref{selmer_conds_intro_notat} the definition of the subspace $ \mathscr{S}_v(A,\chi)\subseteq H^1(K_v,A[2])$. 

\begin{lemma} \label{satisfies_local_selmer}
Let $v$ be a place of $K$ and suppose that $X(K_v)\neq \emptyset$. Then  there is a quadratic character $\chi:G_{K_v}\rightarrow \mu_2$ such that $\textup{res}_v(\mathfrak{a})\in \mathscr{S}_v(A,\chi)$.
\end{lemma}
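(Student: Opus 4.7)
The plan is to extract the required character $\chi$ from the Galois action on the fibre of the double cover $p\colon \widetilde{Y}\to X$ above a given $K_v$-point. I would fix $x\in X(K_v)$ and split into two cases according to whether $x$ lies on the image in $X$ of the exceptional divisors of the blow-up $\widetilde{Y}\to Y$, equivalently on the image of the fixed locus of $\widetilde{\iota}$.

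In the first case the exceptional divisors on $\widetilde{Y}$ and on $X$ are pairwise disjoint and indexed by the fixed points of $\iota$ on $Y$, which form the $A[2]$-torsor $Z_{\mathfrak{a}}$ associated to $\mathfrak{a}$ (cf.\ \Cref{2_torsor_trivial_lemma}). Since $G_{K_v}$ fixes $x$, it must stabilise the unique such divisor through $x$, forcing the corresponding point of $Z_{\mathfrak{a}}$ to be $K_v$-rational. This gives $\textup{res}_v(\mathfrak{a})=0$, and the conclusion holds with $\chi$ trivial.

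In the second case, $p$ is étale at $x$ and $p^{-1}(x)$ is a $\mu_2$-torsor over $\textup{Spec}(K_v)$, i.e.\ a pair of geometric points permuted by a quadratic character $\chi\colon G_{K_v}\to \mu_2$. Regarding $\widetilde{Y}^{\chi}$ as the geometric twist of $\widetilde{Y}$ by the cocycle $\sigma\mapsto \widetilde{\iota}^{\chi(\sigma)}$ (compatible with the standard twist $Y\mapsto Y^{\chi}$ because $\widetilde{\iota}$ lifts the antipodal involution on $A$), any chosen geometric preimage of $x$ becomes $K_v$-rational in $\widetilde{Y}^{\chi}$. Composing with the birational blow-down $\widetilde{Y}^{\chi}\to Y^{\chi}$ then produces a $K_v$-point of $Y^{\chi}$. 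Since the class of the $2$-covering $Y^{\chi}$ in $H^1(K_v,A^{\chi})$ is the image of $\textup{res}_v(\mathfrak{a})$ under $H^1(K_v,A^{\chi}[2])\to H^1(K_v,A^{\chi})$ via the canonical identification $A[2]\cong A^{\chi}[2]$, its vanishing yields $\textup{res}_v(\mathfrak{a})\in \mathscr{S}_v(A^{\chi})=\mathscr{S}_v(A,\chi)$, as required.

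The hard part will be confirming the compatibility between the two descriptions of $\widetilde{Y}^{\chi}$ implicit above: the geometric twist of $\widetilde{Y}\to X$ by a character on the base (via the deck involution $\widetilde{\iota}$), versus the twist produced by first forming $Y^{\chi}$ and then blowing up its fixed points of $\iota^{\chi}$. This compatibility, which should be available directly from the setup of \cite[Section 6]{HS16}, is precisely what bridges the Galois data coming from the point $x\in X(K_v)$ and the arithmetic twist of the $2$-covering needed for the Selmer-side conclusion.
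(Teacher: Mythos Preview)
Your proposal is correct and follows essentially the same approach as the paper: extract $\chi$ from the $G_{K_v}$-action on $p^{-1}(P_v)$, then observe that $\widetilde{Y}^\chi$ (hence $Y^\chi$) acquires a $K_v$-point. The paper handles both of your cases uniformly by noting that $p^{-1}(P_v)$ has size at most $2$ and defining $\chi$ via $G_{K_v}\to\textup{Sym}(p^{-1}(P_v))\hookrightarrow\mu_2$, and it absorbs the compatibility you flag as the ``hard part'' into the reference to \cite[Section~6]{HS16}.
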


\begin{proof}
Let $P_v\in X(K_v)$. Then $G_{K_v}$ acts on the set $p^{-1}(P_v)\subseteq \widetilde{Y}(\overline{K_v})$, which has size at most $2$. Denote by $\chi$  the corresponding homomorphism
\[G_{K_v}\longrightarrow \textup{Sym}(p^{-1}(P_v))\hookrightarrow \mu_2.\]
  By construction,   $G_{K_v}$ acts trivially on $p_\chi^{-1}(P_v)$, so that $\widetilde{Y}^\chi(K_v)\neq \emptyset$. Thus also $Y^\chi(K_v)\neq \emptyset$, hence  the image of $\textup{res}_v(\mathfrak{a})$ in $H^1(K_v,A^\chi)$ is trivial. That is,  $\mathfrak{a}\in \mathscr{S}_v(A,\chi)$.
\end{proof}

The following is an analogue of  \cite[Proposition 6.2]{HS16}. That result is established as an application of the fibration method. Here, we instead draw on \Cref{approximating_extensions_lemma}. 

\begin{proposition} \label{prop:has_selmer_character}
Suppose that $X(K_v)\neq \emptyset$ for all places $v$. Then there is a quadratic character $\chi_0: G_K\rightarrow \mu_2$ such that
\[\mathfrak{a}\in \textup{Sel}^2(A^{\chi_0}/K),\quad \textup{rk}_2(A^{\chi_0}/K)\equiv 1 \pmod 2, \quad \textup{ and }\quad  \textup{Sel}^2(A^{\chi_0}/K)\cap H^1(G,A[2])=\left \langle \mathfrak{c}\right \rangle.\]
\end{proposition}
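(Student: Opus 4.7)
The plan is to construct $\chi_0$ by specifying its local behaviour at a carefully chosen finite set of places and then gluing via \Cref{approximating_extensions_lemma}. By \Cref{satisfies_local_selmer}, the hypothesis $X(K_v)\neq \emptyset$ gives, at every place $v$ of $K$, a local quadratic character $\chi_v$ with $\textup{res}_v(\mathfrak{a})\in \mathscr{S}_v(A,\chi_v)$. Let $\Sigma_0$ be a finite set of places containing all those dividing $2N_A\infty$, all places at which $\mathfrak{a}$ is ramified, all finite places ramified in $L/K$, and the prime $\mathfrak{p}_0$ of \Cref{assumption:local_solubility}. By \Cref{selmer_conds_lemma}(ii), since $\mathfrak{a}$ is unramified at $\mathfrak{p}_0$ and $\Phi(A/K_{\mathfrak{p}_0})$ has odd order, both the trivial and the unique non-trivial unramified quadratic character of $K_{\mathfrak{p}_0}$ can serve as $\chi_{\mathfrak{p}_0}$. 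By \Cref{omega_prop} these two contribute opposite values of $\Omega_{\mathfrak{p}_0}$, since the conductor exponent of $A$ at $\mathfrak{p}_0$ is odd by hypothesis; hence by \Cref{parity_2_infinity}, toggling $\chi_{\mathfrak{p}_0}$ flips the parity of $\textup{rk}_2(A^{\chi_0}/K)$. I fix the choice that makes this parity odd.

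To enforce condition (iii), I use that $H^1(G,A[2])$ is finite. For each $\mathfrak{b}\in H^1(G,A[2])\setminus \langle \mathfrak{c}\rangle$, \Cref{existence_of_frob_elements_prop_1}(C) provides $g_\mathfrak{b}\in G$ with $\textup{res}_{\langle g_\mathfrak{b}\rangle}(\mathfrak{b})\neq 0$. Setting $T=\{\mathfrak{a}\}$ and identifying $\textup{Gal}(K_T/K)\cong A[2]\rtimes G$ via \Cref{iso_to_semi_direct_general}, the element $\sigma_\mathfrak{b}=(0,g_\mathfrak{b})$ lies in the subgroup $H$ from the discussion preceding \Cref{2_torsor_trivial_lemma}, so $\mathfrak{a}$ restricts trivially to $\langle \sigma_\mathfrak{b}\rangle$. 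By the Chebotarev density theorem, there are distinct primes $\mathfrak{q}_\mathfrak{b}\notin \Sigma_0$, unramified in $K_T/K$, with $\textup{Frob}_{\mathfrak{q}_\mathfrak{b}}$ conjugate to $\sigma_\mathfrak{b}$. At such a prime, $\textup{res}_{\mathfrak{q}_\mathfrak{b}}(\mathfrak{a})=0$ while $\textup{res}_{\mathfrak{q}_\mathfrak{b}}(\mathfrak{b})$ is a non-zero class in $H^1_{\textup{ur}}(K_{\mathfrak{q}_\mathfrak{b}},A[2])$. I specify $\chi_{\mathfrak{q}_\mathfrak{b}}$ to be any ramified quadratic character of $K_{\mathfrak{q}_\mathfrak{b}}$, so that $\mathscr{S}_{\mathfrak{q}_\mathfrak{b}}(A,\chi_{\mathfrak{q}_\mathfrak{b}})\cap H^1_{\textup{ur}}(K_{\mathfrak{q}_\mathfrak{b}},A[2])=0$ by \Cref{selmer_conds_lemma}(iii).

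I then apply \Cref{approximating_extensions_lemma} with $\Sigma=\Sigma_0\cup\{\mathfrak{q}_\mathfrak{b}\}_\mathfrak{b}$ and an auxiliary Galois extension $F/K$ large enough to encode the prescribed local data, producing a global $\chi_0$ whose restriction to $G_{K_v}$ agrees with $\chi_v$ for all $v\in \Sigma$ and which is unramified outside $\Sigma\cup \mathcal{P}(\mathcal{C})$. Condition (ii) then holds: $\textup{res}_v(\mathfrak{a})\in \mathscr{S}_v(A,\chi_0)$ at every $v\in \Sigma_0$ by construction, at each $\mathfrak{q}_\mathfrak{b}$ because $\textup{res}_{\mathfrak{q}_\mathfrak{b}}(\mathfrak{a})=0$, and at the remaining places by \Cref{selmer_conds_lemma}(ii) since $\chi_0$ is unramified there. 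Condition (i) follows from \Cref{parity_2_infinity}, as the $\mathfrak{q}_\mathfrak{b}$ lie outside $2N_A\infty$ and so do not affect parity, while the choice at $\mathfrak{p}_0$ supplies the required oddness. For condition (iii), the inclusion $\langle\mathfrak{c}\rangle\subseteq \textup{Sel}^2(A^{\chi_0}/K)\cap H^1(G,A[2])$ is recalled in \S\ref{ssec:class_c}; for the reverse inclusion, each $\mathfrak{b}\in H^1(G,A[2])\setminus \langle \mathfrak{c}\rangle$ is ruled out by the local condition at $\mathfrak{q}_\mathfrak{b}$.

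The main obstacle is the reciprocity condition \eqref{reciprocity_condition} required to invoke \Cref{approximating_extensions_lemma} with the enlarged set $\Sigma$: one must verify that $\sum_{v\in \Sigma}\textup{inv}_v(\chi_v\cup \textup{res}_v\psi)=0$ for every $\psi\in \mathcal{A}(\mathcal{C})$, and $\mathcal{C}$ should be chosen so that this holds. This is the standard global reciprocity check, analogous to \cite[Proposition 6.2]{HS16}, and the freedom in choosing the ramified characters $\chi_{\mathfrak{q}_\mathfrak{b}}$ (together with the toggling flexibility at $\mathfrak{p}_0$, subject to the parity constraint) should provide enough room. A secondary technical point is that imposing ramification at the $\mathfrak{q}_\mathfrak{b}$ can in principle enlarge the intersection $\textup{Sel}^2(A^{\chi_0}/K)\cap H^1(G,A[2])$ by new classes; but since this intersection is finite, any such new class can be killed by an additional iteration of the construction, so finitely many rounds of twisting suffice.
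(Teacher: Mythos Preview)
Your overall architecture matches the paper's: pick local characters at a finite set $\Sigma$ (with the toggle at $\mathfrak{p}_0$ to fix parity, and auxiliary primes $\mathfrak{q}_\mathfrak{b}$ to kill the unwanted classes in $H^1(G,A[2])$), then globalise via \Cref{approximating_extensions_lemma}. However, there is a genuine gap at exactly the point you flag as ``the main obstacle'': you never specify the pair $(F,\mathcal{C})$ to which \Cref{approximating_extensions_lemma} is applied, and your suggestion that the freedom in the $\chi_{\mathfrak{q}_\mathfrak{b}}$ ``should provide enough room'' to satisfy \eqref{reciprocity_condition} is not an argument. The paper's resolution is clean and you have all the ingredients for it: take $F=K_T$ (with $T=\{\mathfrak{a}\}$) and take $\mathcal{C}$ to be the set described in \Cref{2_torsor_trivial_lemma}, i.e.\ the union of conjugates of $H$. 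The second assertion of that lemma says precisely that no non-trivial quadratic character of $\textup{Gal}(K_T/K)$ vanishes on $\mathcal{C}$, so $\mathcal{A}(\mathcal{C})=\{1\}$ and the reciprocity condition \eqref{reciprocity_condition} is vacuous. No juggling of local characters is needed.

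This choice of $\mathcal{C}$ also repairs a second, related gap in your verification of $\mathfrak{a}\in \textup{Sel}^2(A^{\chi_0}/K)$. You assert that at places outside $\Sigma$ the character $\chi_0$ is unramified, but \Cref{approximating_extensions_lemma} only gives $\chi_0$ unramified outside $\Sigma\cup\mathcal{P}(\mathcal{C})$; at primes in $\mathcal{P}(\mathcal{C})$ it may well be ramified. With the paper's $\mathcal{C}$, the first part of \Cref{2_torsor_trivial_lemma} guarantees that for any $\mathfrak{p}\in\mathcal{P}(\mathcal{C})$ one has $\textup{res}_\mathfrak{p}(\mathfrak{a})=0$, so the local Selmer condition at such primes is automatic regardless of ramification. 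Finally, your ``secondary technical point'' about iteration is unnecessary: the intersection is with the fixed finite group $H^1(G,A[2])$, and you have already planted a prime $\mathfrak{q}_\mathfrak{b}$ for every $\mathfrak{b}\in H^1(G,A[2])\setminus\langle\mathfrak{c}\rangle$, so each such $\mathfrak{b}$ is excluded in one pass.
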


Before proving \Cref{prop:has_selmer_character} we first establish the following preparatory lemma. 

\begin{lemma} \label{exist_of_primes_lemma_inf_image}
Let $\mathfrak{z}\in H^1(G,A[2])$ and suppose that $\mathfrak{z}$ is not in the subspace generated by $\mathfrak{c}$. Let $\Sigma_0$ be a finite set of places of $K$. Then we can find a prime $\mathfrak{p}\notin \Sigma_0$ such that 
\[\textup{res}_{\mathfrak{p}}(\mathfrak{a})=0 \quad \textup{ and }\quad \textup{res}_{\mathfrak{p}}(\mathfrak{z})\neq 0.\] 
\end{lemma}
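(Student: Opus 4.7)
The plan is to reduce the problem to producing a single element $\sigma_0\in\textup{Gal}(K_T/K)$ with the required properties and then invoke the Chebotarev density theorem. Assumption (C) of \Cref{existence_of_frob_elements_prop_1} asserts that the subgroup $\bigcap_{g\in G}\ker\bigl(H^1(G,A[2])\to H^1(\langle g\rangle,A[2])\bigr)$ is generated by $\mathfrak{c}$. Since by hypothesis $\mathfrak{z}$ lies outside $\langle\mathfrak{c}\rangle$, it cannot belong to this intersection, so there exists $g_0\in G$ with $\textup{res}_{\langle g_0\rangle}(\mathfrak{z})\neq 0$.

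Let $\sigma_0\in\textup{Gal}(K_T/K)$ correspond to $(0,g_0)\in A[2]\rtimes G$ under \eqref{iso_semidirect_special_case}. Then $\sigma_0\in H$, and therefore $\sigma_0\in\mathcal{C}$ by \Cref{2_torsor_trivial_lemma}. After enlarging $\Sigma_0$ to include all places ramified in $K_T/K$, Chebotarev produces infinitely many primes $\mathfrak{p}\notin\Sigma_0$ whose Frobenius class in $\textup{Gal}(K_T/K)$ is that of $\sigma_0$. For any such $\mathfrak{p}$, the classes $\mathfrak{a}$ and $\mathfrak{z}$ are unramified at $\mathfrak{p}$ (both being inflated from $\textup{Gal}(K_T/K)$), and their restrictions to $K_\mathfrak{p}$ are computed via \eqref{ev_at_frob_unram_iso} by evaluating representing cocycles at $\textup{Frob}_\mathfrak{p}$. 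Since $\mathcal{C}$ is a union of conjugacy classes, $\textup{Frob}_\mathfrak{p}\in\mathcal{C}$, which forces $\textup{res}_\mathfrak{p}(\mathfrak{a})=0$.

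It remains to show $\textup{res}_\mathfrak{p}(\mathfrak{z})\neq 0$. A short cocycle computation with a representative $z$ of $\mathfrak{z}$ gives, for any $h,g\in G$,
\[ z(hgh^{-1})\equiv h\cdot z(g)\pmod{(hgh^{-1}-1)A[2]}, \]
so the vanishing of $\textup{res}_{\langle g\rangle}(\mathfrak{z})$ depends only on the conjugacy class of $g$ in $G$. The image of $\textup{Frob}_\mathfrak{p}$ in $G$ is conjugate to $g_0$, and the non-vanishing $\textup{res}_{\langle g_0\rangle}(\mathfrak{z})\neq 0$ therefore transfers to give $\textup{res}_\mathfrak{p}(\mathfrak{z})\neq 0$, as required.

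The only substantive input is the existence of $g_0$, which relies crucially on Assumption (C); the remaining bookkeeping is a direct packaging of Chebotarev with the description of $\mathcal{C}$ in \Cref{2_torsor_trivial_lemma}, together with the conjugation-invariance of the cyclic-restriction condition.
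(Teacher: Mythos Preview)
Your proof is correct and follows essentially the same route as the paper's: use Assumption~(C) to find $g_0\in G$ on which $\mathfrak{z}$ does not trivialise, lift to $\sigma_0=(0,g_0)\in\textup{Gal}(K_T/K)$, and apply Chebotarev. The only cosmetic difference is that you invoke \Cref{2_torsor_trivial_lemma} and an explicit cocycle computation to handle the conjugacy-invariance, whereas the paper argues both points directly from the construction of $\sigma$.
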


\begin{proof}
By \Cref{existence_of_frob_elements_prop_1} (C) there is $g\in G$ such that $\mathfrak{z}$ has non-trivial restriction to the cyclic subgroup generated by $g$. Let $K_T$ be as in \Cref{field_ext_trivl_subsec}, so that both $\mathfrak{z}$ and $\mathfrak{a}$ factor throught $\textup{Gal}(K_T/K)$. Let $\sigma$ by the element of $\textup{Gal}(K_T/K)$ mapping under the isomorphism \eqref{iso_semidirect_special_case} to $(0,g)\in A[2]\rtimes G$.  By construction, the restriction of $\mathfrak{a}$ to the subgroup generated by $\sigma$ is trivial, while the restriction of $\mathfrak{z}$ to the subgroup generated by $\sigma$ is non-trivial. Let $\mathfrak{p}\notin \Sigma_0$ be any  prime  unramified in $K_T/K$ for which the Frobenius element $\textup{Frob}_\mathfrak{p}\in \textup{Gal}(K_T/K)$ lies in the same conjugacy class as $\sigma$. Then $\mathfrak{p}_0$ satisfies the requirements of the statement.
\end{proof}

\begin{proof}[Proof of \Cref{prop:has_selmer_character}]
Let $\mathfrak{p}_0$ be as in \Cref{assumption:local_solubility}. Denote by $\Sigma_0$ a finite set of places of $K$ containing all places dividing $2N_A\infty$ (hence in particular  $\mathfrak{p}_0$), and all primes $\mathfrak{p}$ for which $\textup{res}_\mathfrak{p}(\mathfrak{a})$ is ramified. For each place $v\in \Sigma_0 \setminus \{\mathfrak{p}_0\}$, \Cref{satisfies_local_selmer} provides a quadratic character $\chi_v:G_{K_v}\rightarrow \mu_2$  such that $\textup{res}_v(\mathfrak{a})\in \mathscr{S}_v(A,\chi_v)$.  With the local invariants $\Omega_v(\chi_v)$ as in \Cref{local_omega_invs}, set
\[\kappa=(-1)^{\textup{rk}_2(A/K)}\cdot\prod_{v\in \Sigma_0 \setminus \{\mathfrak{p}_0\}}\Omega_v(\chi_v).\]
If $\kappa=-1$ take $\chi_{\mathfrak{p}_0}$ to be the trivial character of $G_{K_{\mathfrak{p}_0}}$, whilst if $\kappa=1$ take $\chi_{\mathfrak{p}_0}$ to be the unique non-trivial unramified quadratic character of $G_{K_{\mathfrak{p}_0}}$.  

Next, as is possible by \Cref{exist_of_primes_lemma_inf_image},  for each of the finitely many classes $\mathfrak{z}\in H^1(G,A[2])\setminus \left \langle \mathfrak{c}\right \rangle$, fix a prime $\mathfrak{p}_{\mathfrak{z}}\notin \Sigma_0$ such that $\textup{res}_{\mathfrak{p}_\mathfrak{z}}(\mathfrak{a})=0$ and $\textup{res}_{\mathfrak{p}_{\mathfrak{z}}}(\mathfrak{z})\neq0$. For each such $\mathfrak{z}$ fix also a ramified quadratic character $\chi_{\mathfrak{z}}$ of $G_{K_{\mathfrak{p}_\mathfrak{z}}}$. Take $\Sigma$ to be the union of $\Sigma_0$ with the set of all    $\mathfrak{p}_\mathfrak{z}$.

By combining \Cref{approximating_extensions_lemma} with \Cref{2_torsor_trivial_lemma}, we see that there is a quadratic character $\chi_0: G_K\rightarrow \mu_2$ such that $\textup{res}_v(\chi_0)=\chi_v$ for each $v\in \Sigma$, and such that, for all primes $\mathfrak{p}\notin \Sigma$ at which $\chi_0$ is ramified, we have $\textup{res}_v(\mathfrak{a})=0$. We claim that $\chi_0$ satisfies the conditions of the statement. To see this, note first that  from \Cref{parity_2_infinity} and \Cref{omega_prop} we have $\textup{rk}_2(A^{\chi_0}/K)$  odd. 

Next, we show that $\mathfrak{a}$ lies in $\textup{Sel}^2(A^{\chi_0}/K)$. Let $v$ be a place of $K$. We wish to show that $\textup{res}_v(\mathfrak{a})$ lies in $\mathscr{S}_v(A,\textup{res}_v(\chi_0))$. First, if $v\in \Sigma \setminus \{\mathfrak{p}_0\}$, or if $v\notin \Sigma$ and $\textup{res}_v(\chi_0)$ is ramified, then this holds by construction. Finally, if $v\notin \Sigma$ and $\chi_0$ is unramified, or if $v=\mathfrak{p}_0$, then we conclude from \Cref{selmer_conds_lemma}(ii) and the assumption that $\mathfrak{a}$ is unramified at $v$.

Finally, given any $\mathfrak{z}\in H^1(G,A[2])\setminus \left \langle \mathfrak{c}\right \rangle$, we wish to show that $\mathfrak{z}\notin \textup{Sel}^2(A^{\chi_0}/K)$.  To see this, note that $\mathfrak{z}$ is unramified at $\mathfrak{p}_{\mathfrak{z}}$ (since $\mathfrak{p}_\mathfrak{z}\notin \Sigma_0$) and that $\textup{res}_{\mathfrak{p}_\mathfrak{z}}(\mathfrak{z})\neq 0$ by construction. However, since we have chosen $\chi_0$ to ramify at $\mathfrak{p}_\mathfrak{z}$, \Cref{selmer_conds_lemma}(iii) shows that $\mathscr{S}_v(A,\textup{res}_v(\chi_0))\cap H^1_{\textup{ur}}(K_{\mathfrak{p}_\mathfrak{z}},A[2])=0$. 
\end{proof}

  \begin{cor} \label{cnonzeroshadivcor}
There is a quadratic character $\chi:G_K\rightarrow \mu_2$ such that $\textup{rk}_2(A^\chi/K)=1$, such that $\mathfrak{a}\in \textup{Sel}^2(A^\chi/K)$, and such that the image of $\mathfrak{a}$ in $\Sha(A^\chi/K)[2^\infty]$ is divisible. In particular, if $\Sha(A^\chi/K)[2^\infty]$ is finite, then $\mathfrak{a}$ has trivial image in $\Sha(A^\chi/K)[2^\infty]$. 
 \end{cor}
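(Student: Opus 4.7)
The plan is to combine \Cref{prop:has_selmer_character} with \Cref{key_cassels_tate_prop_4_sel}: the former provides a starting twist $\chi_0$ where $\mathfrak{a}$ lies in the $2$-Selmer group with odd $2^\infty$-Selmer rank, and the latter allows a further adjustment, preserving the $2$-Selmer group and the parity of the $2^\infty$-Selmer rank, so that the resulting Cassels--Tate pairing has kernel precisely $\langle\mathfrak{a}\rangle$. A standard interpretation of the kernel of $\textup{CTP}_\chi$ on the $2$-Selmer group will then give both the rank statement and the divisibility of the image of $\mathfrak{a}$ in $\Sha$.

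First I would apply \Cref{prop:has_selmer_character} to fix a quadratic character $\chi_0$ with $\mathfrak{a}\in \textup{Sel}^2(A^{\chi_0}/K)$, with $\textup{rk}_2(A^{\chi_0}/K)$ odd, and with $\textup{Sel}^2(A^{\chi_0}/K)\cap H^1(G,A[2])=\langle\mathfrak{c}\rangle$. Write $V=\textup{Sel}^2(A^{\chi_0}/K)$; note that $\mathfrak{a}\neq 0$ by \Cref{assumption:local_solubility}, and when $\mathfrak{c}\neq 0$ the classes $\mathfrak{a}$ and $\mathfrak{c}$ are $\mathbb{F}_2$-linearly independent in $V$. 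I would then construct an admissible pairing $P$ on $V$ whose kernel is exactly $\langle\mathfrak{a}\rangle$. Admissibility pins down the diagonal $\mathfrak{b}\mapsto P(\mathfrak{b},\mathfrak{b})$ in terms of the linear form $\mathfrak{b}\mapsto P(\mathfrak{b},\mathfrak{c})$, specifies $P(\mathfrak{c},\mathfrak{c})$ in terms of $\dim V$ modulo $2$, and forces $\dim V\equiv \textup{rk}_2(A^{\chi_0}/K)\equiv 1\pmod 2$. Placing $\mathfrak{a}$ in the kernel is consistent with these constraints, since the requirement $P(\mathfrak{a},\mathfrak{c})=0$ forces $P(\mathfrak{a},\mathfrak{a})=0$ automatically. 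The construction then reduces to producing a non-degenerate symmetric $\mathbb{F}_2$-bilinear form on $V/\langle\mathfrak{a}\rangle$ with prescribed diagonal and a prescribed value at the image of $\mathfrak{c}$; this is a short linear-algebra calculation, split into the cases $\mathfrak{c}=0$ and $\mathfrak{c}\neq 0$. Applying \Cref{key_cassels_tate_prop_4_sel} to $P$ then yields a quadratic character $\chi$ with $\textup{Sel}^2(A^\chi/K)=V$, with $\textup{rk}_2(A^\chi/K)$ odd, and with $\textup{CTP}_\chi=P$ on this common $2$-Selmer group; in particular $\mathfrak{a}\in\textup{Sel}^2(A^\chi/K)$.

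Finally, since $A^\chi(K)[2]=0$ (as $A[2]$ is simple as an $\mathbb{F}_2[G]$-module by \Cref{existence_of_frob_elements_prop_1}(A), and a nonzero submodule would force $G$ to act trivially on $A[2]$, contradicting $\dim A[2]\geq 4$), the standard identification of the kernel of the Cassels--Tate pairing on $\textup{Sel}^2(A^\chi/K)$ with the $2$-torsion of the maximal divisible subgroup of $\textup{Sel}^{2^\infty}(A^\chi/K)$ gives the equality $\dim \ker\textup{CTP}_\chi=\textup{rk}_2(A^\chi/K)$. Since we have arranged $\dim\ker \textup{CTP}_\chi=1$, this yields $\textup{rk}_2(A^\chi/K)=1$, and shows that $\mathfrak{a}$ lies in the divisible subgroup of $\textup{Sel}^{2^\infty}(A^\chi/K)$. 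Its image in $\Sha(A^\chi/K)[2^\infty]$ therefore lies in the (maximal) divisible subgroup, establishing the main claim. The final assertion is immediate, as a finite divisible abelian group is trivial. The main obstacle is the linear-algebra construction of the admissible pairing $P$ in step two, where the interplay of the $\mathfrak{c}$-induced diagonal constraint with the parity condition on $P(\mathfrak{c},\mathfrak{c})$ must be tracked carefully across the cases.
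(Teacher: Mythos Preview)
Your approach is essentially the same as the paper's: start from \Cref{prop:has_selmer_character}, construct an admissible pairing $P$ on $\textup{Sel}^2(A^{\chi_0}/K)$ whose kernel is exactly $\langle\mathfrak{a}\rangle$, and invoke \Cref{key_cassels_tate_prop_4_sel}. The paper phrases the conclusion via lifting to $\textup{Sel}^4$ rather than via the kernel of $\textup{CTP}_\chi$, but these are equivalent.

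One small correction: your ``standard identification'' of $\ker\textup{CTP}_\chi$ with the $2$-torsion of the maximal divisible subgroup of $\textup{Sel}^{2^\infty}(A^\chi/K)$ is not quite right. In general (with $A^\chi(K)[2]=0$) one only has the containment $D[2]\subseteq \ker\textup{CTP}_\chi$, where $D$ is the maximal divisible subgroup; the kernel is the image of $\textup{Sel}^4\to\textup{Sel}^2$, which can be strictly larger. What you actually need is the inequality $\textup{rk}_2(A^\chi/K)=\dim D[2]\leq \dim\ker\textup{CTP}_\chi=1$, which together with the oddness of $\textup{rk}_2(A^\chi/K)$ forces $\textup{rk}_2(A^\chi/K)=1$; equality $D[2]=\ker\textup{CTP}_\chi$ then follows, placing $\mathfrak{a}$ in $D[2]$ as desired. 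Similarly, your claim that admissibility ``forces $\dim V\equiv 1\pmod 2$'' holds only in the case $\mathfrak{c}=0$ (via \Cref{ctp_is_admissible} applied to $\textup{CTP}_{\chi_0}$); when $\mathfrak{c}\neq 0$ no parity constraint on $\dim V$ is needed, since a non-alternating non-degenerate symmetric form with prescribed nonzero characteristic element exists on an $\mathbb{F}_2$-space of any positive dimension.
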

 
 \begin{proof}
Let $\chi_0$ be as in \Cref{prop:has_selmer_character}. We will apply \Cref{key_cassels_tate_prop_4_sel} with $A^{\chi_0}$ in place of $A$. Specifically, from that result we see that we can find a quadratic character $\chi$ such that
\[\textup{rk}_2(A^\chi/K)\equiv 1 ~(\textup{mod } 2) \quad \textup{ and } \quad  \quad \textup{Sel}^2(A^{\chi}/K)=\textup{Sel}^2(A^{\chi_0}/K),\]
and such that $\textup{CTP}_\chi$ is any bilinear pairing $P$ on $\textup{Sel}^2(A^{\chi_0}/K)$ we desire, subject only to the conditions
\[2P(\mathfrak{c},\mathfrak{c})\equiv 1+\dim \textup{Sel}^2(A^{\chi_0}/K)\]
and 
\[\quad \quad\quad \quad\quad \quad \quad P(\mathfrak{b},\mathfrak{b})=P(\mathfrak{b},\mathfrak{c})\quad \quad \textup{ for all }\mathfrak{b}\in \textup{Sel}^2(A^{\chi_0}/K).\]
Note that when $\mathfrak{c}=0$,  \cite[Theorem 8]{MR1740984} and the fact that $\textup{rk}_2(A^{\chi_0}/K)$ is odd forces $\dim \textup{Sel}^2(A^{\chi_0}/K)$ to be odd also, so the first condition on $P$ is automatically satisfied.
It is now an easy exercise to show that there exists such a pairing $P$ for which the given class $\mathfrak{a}$ is the unique non-trivial element in its kernel. Thus we can find $\chi$ such that $\textup{rk}_2(A^\chi/K)$ is odd, and such that $\mathfrak{a}$ is the unique non-trivial element of $\textup{Sel}^2(A^\chi/K)$ lifting to $\textup{Sel}^4(A^\chi/K)$. For any such $\chi$ we  have both that   $\textup{rk}_2(A^\chi/K)=1$ and  that  $\mathfrak{a}$ is divisible in $\Sha(A^\chi/K)[2^\infty]$.
 \end{proof}
 
 \begin{proof}[Proof of \Cref{main_thm}]
 Let $\chi$ be as in \Cref{cnonzeroshadivcor}. Let $Y^\chi$ denote the $2$-covering for $A^\chi$ arising from viewing $\mathfrak{a}$ as an element of $H^1(K,A^\chi[2])$.   Under the assumption that the Shafarevich--Tate group of $A^\chi$ is finite, $\mathfrak{a}$ has trivial image in $\Sha(A^\chi/K)$. In particular, the corresponding $2$-covering $Y^\chi$, hence also the corresponding Kummer variety $X^\chi\cong X$ has a $K$-point. The same argument as given in the final paragraph of the proof of \cite[Proof of Theorem 2.2]{HS16} now shows  moreover that the $K$-points are Zariski dense in $X$ (as a consequence of part (A) of \Cref{existence_of_frob_elements_prop_1}). 
 \end{proof} 

 \section{Example: Jacobians of hyperelliptic curves} \label{sec:specialising_to_hyp_curves}
 
In this section we specialise to Jacobians of hyperelliptic curves and explain how to deduce \Cref{thm:hyp_curves_main} from \Cref{main_thm}. Although much of the material in this section is standard, we include it for completeness.
 
Let $d\geq 2$ be an integer and let $f(x)\in K[x]$ be a squarefree polynomial of degree $2d+2$. Let $C:y^2=f(x)$ be the corresponding genus $d$ hyperelliptic curve and take $A$ to be the canonically principally polarised Jacobian of $C$. Since $d$ is even, there are $2$ points $\infty^{\pm}$ at infinity on $C$. 

\subsection{The Galois module $A[2]$}
As is well known, the $G_K$-module $A[2]$ admits the following explicit description. Denote by $\mathcal{R}$ the set of roots of $f(x)$ in $\overline{K}$ and let $\mathbb{F}_2^\mathcal{R}$ be the permutation module over $\mathbb{F}_2$ on the elements of $\mathcal{R}$. 
 We identify $\mathbb{F}_2^{\mathcal{R}}$ with the power set of $\mathcal{R}$ via the map sending a subset to its characteristic function. Having made this identification, define the $G_K$-invariant $\mathbb{F}_2$-valued pairing $e$ on  $\mathbb{F}_2^{\mathcal{R}}$ by  setting
\[e(S,T)=\#(S \cap T) \pmod 2.\]
Define $V_0$ to be the subspace of $\mathbb{F}_2^{\mathcal{R}}$ consisting of even sized subsets of $\mathcal{R}$, and define $V$ to be the quotient of $V_0$ by the $1$-dimensional subspace generated by the set $\mathcal{R}$. Then $V$ is an $\mathbb{F}_2$-vector space of dimension $2d$, and the pairing $e$ descends to a non-degenerate alternating pairing on $V$. Per \cite[Section 5.2]{Dol12}, there is an   isomorphism of $G_K$-modules  
\begin{equation} \label{explicit_2_tors}
V\cong A[2]
\end{equation}
identifying  $e$ with the Weil pairing. It is realised by the map sending an even sized subset $S\subseteq \mathcal{R}$ to the class of the divisor
 \[\sum_{r\in S}(r,0) - (\tfrac{1}{2}\#S )\cdot \big(\infty^++\infty^{-}\big).  \] 
 In particular, the  $G_K$-action on $A[2]$ factors through the Galois group $\textup{Gal}(f)$ of $f(x)$. Using that $d\geq 2$, one checks that the induced action of $\textup{Gal}(f)$ on $A[2]$ is faithful, hence $G:=\textup{Gal}(K(A[2])/K)$ is equal to $\textup{Gal}(f)$. Thus $G$ can naturally be viewed as a subgroup of the symmetric group $S_{2d+2}$.





\begin{lemma} \label{lem:yp+satisfies_all_conds}
Suppose that $G=\textup{Gal}(f)$ is isomorphic to $S_{2d+2}$. Then $A$ satisfies parts (A) and (B) of \Cref{existence_of_frob_elements_prop_1}.
\end{lemma}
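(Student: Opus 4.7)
The plan is to work with the explicit model $V\cong A[2]$ furnished by \eqref{explicit_2_tors}, under which the hypothesis $\textup{Gal}(f)\cong S_{2d+2}$ identifies $G$ with the symmetric group acting on $\mathcal{R}$ by permutation and gives $V=V_0/\langle\mathbf{1}\rangle$, where $V_0\subseteq\mathbb{F}_2^{\mathcal{R}}$ is the subspace of even-sized subsets and $\mathbf{1}$ denotes the class of $\mathcal{R}$ itself.

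For part (B), I would exhibit $g\in S_{2d+2}$ of cycle type $(2d+1,\,1)$. Since $g$ has odd order, $H^1(\langle g\rangle,\mathbb{F}_2)=0$, and the long exact sequence attached to $0\to\mathbb{F}_2\to V_0\to V\to 0$ yields $V^g = V_0^g/\mathbb{F}_2$. The $g$-invariant elements of $\mathbb{F}_2^{\mathcal{R}}$ are the four unions of $g$-orbits on $\mathcal{R}$, of which only $\emptyset$ and $\mathcal{R}$ have even cardinality; hence $V_0^g=\mathbb{F}_2\cdot\mathbf{1}$ and $V^g=0$.

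For simplicity in part (A), given a non-zero $G$-submodule $W\subseteq V$ I would let $W_0\subseteq V_0$ be its preimage; this strictly contains $\langle\mathbf{1}\rangle$ and so contains some element representing a subset $S$ with $0<|S|<2d+2$. Picking $i\in S$ and $j\notin S$ yields $S+(ij)\cdot S=\{i,j\}\in W_0$, and by transitivity of $G$ on pairs every such $\{k,\ell\}$ lies in $W_0$; since any even-sized subset of $\mathcal{R}$ is a sum of disjoint pairs in $\mathbb{F}_2^{\mathcal{R}}$, this forces $W_0=V_0$ and hence $W=V$.

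To show $\textup{End}_G(V)=\mathbb{F}_2$, I would fix a pair $\{i,j\}$, write $H=\textup{Stab}_G(\{i,j\})\cong S_2\times S_{2d}$, and prove that $V^H$ is one-dimensional, spanned by $[\{i,j\}]$. The orbits of $H$ on $\mathcal{R}$ are $\{i,j\}$ and $\mathcal{R}\setminus\{i,j\}$, so $V_0^H=\langle\mathbf{1},\{i,j\}\rangle$. For $v\in V^H$ lifted to $\tilde v\in V_0$, the map $h\mapsto h\tilde v-\tilde v\in\{0,\mathbf{1}\}$ is a character of $H$; the three non-trivial characters can be ruled out using two elementary observations, namely that $(ij)$ acts trivially on $V_0^{S_{2d}}$, and that for any transposition $\sigma\in S_{2d}$ and any $(ij)$-fixed $\tilde v$ the difference $\sigma\tilde v-\tilde v$ has size at most two and so cannot equal $\mathbf{1}$. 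Any $\phi\in\textup{End}_G(V)$ therefore satisfies $\phi([\{i,j\}])\in\mathbb{F}_2\cdot[\{i,j\}]$, and $G$-equivariance plus transitivity on pairs forces the scalar to be independent of the pair; since the classes $[\{k,\ell\}]$ span $V$, this gives $\textup{End}_G(V)=\mathbb{F}_2$. The only step that requires genuine care is the three-case analysis in the computation of $V^H$; the remainder is routine orbit combinatorics.
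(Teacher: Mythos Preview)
Your argument is correct and self-contained, but it takes a completely different route from the paper. The paper's proof is a two-line reduction: it cites \cite[Lemma~2.1]{HS16}, which already establishes (A) and (B) when $G$ is the point-stabiliser $S_{2d+1}\subseteq S_{2d+2}$, and then observes that both properties pass upward to the larger group --- any $S_{2d+2}$-submodule is in particular an $S_{2d+1}$-submodule, $\textup{End}_{S_{2d+2}}(V)\subseteq\textup{End}_{S_{2d+1}}(V)=\mathbb{F}_2$, and the element $g$ witnessing (B) already lies in $S_{2d+1}$. Your approach instead verifies everything directly from the combinatorial model $V=V_0/\langle\mathbf{1}\rangle$: a $(2d+1,1)$-cycle for (B), the ``difference-of-translates'' trick to show any nonzero submodule contains all pairs for simplicity, and a computation of $V^H$ for $H=\textup{Stab}_G(\{i,j\})$ to pin down the endomorphism ring. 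This buys independence from the external reference and is pleasantly explicit; the paper's version is shorter but outsources the content.

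One small imprecision: in your statement of Observation~2 the hypothesis ``$(ij)$-fixed $\tilde v$'' is unnecessary and obscures the logic. For any transposition $\sigma=(k\ell)\in S_{2d}$ and \emph{any} $\tilde v\in V_0$, the difference $\sigma\tilde v-\tilde v$ is supported on $\{k,\ell\}$ and hence has size at most two, so cannot equal $\mathbf{1}$ since $|\mathcal{R}|=2d+2\ge 6$. Stated this way, Observation~2 alone already rules out every character of $H$ that is nontrivial on the $S_{2d}$-factor (two of your three), after which Observation~1 dispatches the remaining one. As written, it is not quite explicit how the product character $(\text{sign on }S_2)\times(\text{sign on }S_{2d})$ is excluded.
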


\begin{proof}
This is shown in \cite[Lemma 2.1]{HS16} when $G$ is the full stabiliser in $S_{2d+2}$ of a root of $f(x)$ (hence isomorphic to $S_{2d+1}$). Thus it holds for $G\cong S_{2d+2}$ also.
\end{proof}

\subsection{The classes $\mathfrak{w}$ and $\mathfrak{c}$}
Let $\mathcal{W}$ denote the collection of odd-sized subsets of $\mathcal{R}$, modulo the equivalence relation identifying a subset with its complement. This carries a natural $G_K$-action, and is a principal homogeneous space for $V$ with action given by symmetric difference. Thus associated to $\mathcal{W}$ is a class $\mathfrak{w}$ in $H^1(K,V)=H^1(K,A[2])$, which arises via inflation from $H^1(G,A[2])$. Equivalently, from \eqref{explicit_2_tors} we have a short exact sequence of $G$-modules
\begin{equation} \label{definined_2_tors_hyp}
0\longrightarrow A[2] \longrightarrow \mathbb{F}_2^{\mathcal{R}}/\left \langle \mathcal{R}\right \rangle \longrightarrow \mathbb{F}_2\longrightarrow 0,
\end{equation}
the map $\mathbb{F}_2^{\mathcal{R}}/\left \langle \mathcal{R}\right \rangle \rightarrow \mathbb{F}_2$ sending a subset of $\mathcal{R}$ to its size modulo $2$. Then $\mathfrak{w}$ is the image of $1\in \mathbb{F}_2$ under the connecting map $H^0(K,\mathbb{F}_2)\rightarrow H^1(K,A[2])$ associated to this sequence. 

\begin{proposition}[\cite{Pol71}, Theorem 5.2] \label{prop:when_iso_to_S}
Suppose that $G=\textup{Gal}(f)$ is isomorphic to $S_{2d+2}$. Then $H^1(G,A[2])$ is isomorphic to $\mathbb{F}_2$, generated by the class $\mathfrak{w}$. 
\end{proposition}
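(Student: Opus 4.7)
My plan is to prove the two inequalities $\mathfrak{w} \neq 0$ and $\dim_{\mathbb{F}_2} H^1(G, V) \leq 1$ separately. The first is a direct orbit--stabiliser argument: $\mathfrak{w}$ vanishes iff the $G$-set $\mathcal{W}$ of pairs $\{S, \mathcal{R} \setminus S\}$ with $|S|$ odd admits a $G$-fixed point, but the setwise stabiliser in $S_{2d+2}$ of any such pair is either $S_{|S|} \times S_{2d+2-|S|}$ (when $|S| \neq d+1$) or the wreath product $S_{d+1} \wr S_2$ (when $|S| = d+1$), each a proper subgroup of $G = S_{2d+2}$, so no fixed pair exists.

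For the upper bound I would exploit the pair of short exact sequences
\[
0 \to V_0 \to \mathbb{F}_2^{\mathcal{R}} \xrightarrow{\epsilon} \mathbb{F}_2 \to 0 \qquad \text{and} \qquad 0 \to \langle \mathcal{R}\rangle \to V_0 \to V \to 0.
\]
Since $\mathbb{F}_2^{\mathcal{R}} = \textup{Ind}_{S_{2d+1}}^{S_{2d+2}} \mathbb{F}_2$, Shapiro's lemma identifies $H^i(G, \mathbb{F}_2^{\mathcal{R}})$ with $H^i(S_{2d+1}, \mathbb{F}_2)$, and the map induced by $\epsilon$ becomes corestriction. In degree one this corestriction vanishes, since $\textup{sign}_{S_{2d+1}} = \textup{res}(\textup{sign}_{S_{2d+2}})$ implies $\textup{cor}(\textup{sign}_{S_{2d+1}}) = [G : S_{2d+1}] \cdot \textup{sign}_{S_{2d+2}} = (2d+2) \cdot \textup{sign}_{S_{2d+2}} = 0$ in $\mathbb{F}_2$. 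Combined with the easy observations $V_0^G = \langle \mathcal{R}\rangle$ and $\epsilon(\mathcal{R}) = 0$, unwinding the first LES yields $\dim H^1(G, V_0) = 2$. Feeding this into the LES of the second sequence and using $V^G = 0$ to embed $H^1(G, \mathbb{F}_2) \hookrightarrow H^1(G, V_0)$, an explicit cocycle check shows that the induced map $H^1(G, V_0) \to H^1(G, V)$ kills the sign class and sends the other generator (the connecting class $\delta_1(1)$, represented by $\sigma \mapsto \{r_0, \sigma(r_0)\}$) to $\mathfrak{w}$; this produces an embedding $\langle \mathfrak{w}\rangle \hookrightarrow H^1(G, V)$ whose cokernel injects into $H^2(G, \mathbb{F}_2)$ via the connecting map $\partial$.

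The main obstacle is to show $\partial = 0$, i.e.\ that every class in $H^1(G, V)$ lifts to $H^1(G, V_0)$. I would attack this via the known computation $H^2(S_n, \mathbb{F}_2) \cong \mathbb{F}_2^{\oplus 2}$ for $n \geq 4$ (with generators given by the square of the sign and the Schur class of the double cover $\widetilde{S}_n$), together with the degree-two portion of the first LES to express the relevant piece of $H^2(G, V_0)$ in terms of $H^2(S_{2d+1}, \mathbb{F}_2)$ via Shapiro; a careful tracking of these two generators through the resulting diagram should show that $H^2(G, \mathbb{F}_2) \to H^2(G, V_0)$ is injective, which is exactly $\partial = 0$. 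An alternative, more hands-on route is inflation--restriction for the normal subgroup $A_{2d+2} \lhd G$: since $A_{2d+2}$ still acts transitively on $\mathcal{R}$, we retain $V^{A_{2d+2}} = 0$, whence $H^1(G, V) \cong H^1(A_{2d+2}, V)^{S_2}$, and the right-hand side can be bounded by an analogous LES chase applied over $A_{2d+2}$, using that the Schur multiplier of $A_n$ is known.
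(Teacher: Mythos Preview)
The paper does not supply its own proof of this proposition: it is stated with the citation \cite[Theorem 5.2]{Pol71} in the header and nothing more. So there is no argument in the paper to compare against; your proposal is an independent proof where the paper simply invokes the literature.

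Your outline is sound. The non-vanishing of $\mathfrak{w}$ via orbit--stabiliser is correct, and the Shapiro/long-exact-sequence computation giving $\dim H^1(G,V_0)=2$ with image $\langle\mathfrak{w}\rangle$ in $H^1(G,V)$ is accurate. For the step you flag as the main obstacle, namely injectivity of $H^2(G,\mathbb{F}_2)\to H^2(G,V_0)$, note that the composite with $H^2(G,V_0)\to H^2(G,\mathbb{F}_2^{\mathcal{R}})\cong H^2(S_{2d+1},\mathbb{F}_2)$ is precisely restriction along $S_{2d+1}\hookrightarrow S_{2d+2}$ (the inclusion $\mathbb{F}_2\cong\langle\mathcal{R}\rangle\hookrightarrow\mathbb{F}_2^{\mathcal{R}}$ being the unit of the induction--restriction adjunction). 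For $2d+2\geq 5$ this restriction is an isomorphism on the two standard generators of $H^2(S_n,\mathbb{F}_2)$, so the composite is injective and hence $\partial=0$. This is slightly cleaner than tracking both generators through the full diagram as you suggest, and makes your inflation--restriction alternative unnecessary.

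In terms of trade-offs: Pollatsek's theorem handles $H^1(\textup{Sp}_{2d}(\mathbb{F}_2),\mathbb{F}_2^{2d})$ uniformly and is what the paper also relies on elsewhere (see \Cref{ex:2-tors}(i)); your argument is more elementary and tailored to the $S_{2d+2}$-presentation of $A[2]$, at the cost of importing the structure of $H^2(S_n,\mathbb{F}_2)$.
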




The class $\mathfrak{c}$ of \Cref{ssec:class_c} is related to $\mathfrak{w}$ as follows.

\begin{proposition} \label{prop@c_in_hyp+case}
 If $d$ is odd then the class $\mathfrak{c}$ is trivial, while if $d$ is even we have $\mathfrak{c}=\mathfrak{w}$.  Now suppose that $G=\textup{Gal}(f)$ is isomorphic to $S_{2d+2}$. If $d$ is odd then 
\begin{equation} \label{ugly_intersection}
\bigcap_{g\in G}\ker\Big(H^1(G,A[2])\stackrel{\textup{res}}{\longrightarrow}  H^1(\left \langle g\right \rangle, A[2]) \Big) =0.
\end{equation}
If $d$ is even then  $\mathfrak{c}$ is non-zero and generates $H^1(G,A[2])$.
\end{proposition}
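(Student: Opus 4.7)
The plan is to identify $\mathfrak{c}$ explicitly in $H^1(K,A[2])$ by constructing natural quadratic refinements of the Weil pairing on the model $A[2]\cong V=V_0/\langle \mathcal{R}\rangle$. Consider first the $G_K$-invariant function $q_0:V_0\to \mathbb{F}_2$ defined by $q_0(S)=\tfrac{1}{2}\#S\pmod 2$. The inclusion--exclusion identity $\#(S\triangle T)=\#S+\#T-2\#(S\cap T)$ gives immediately that $q_0(S+T)=q_0(S)+q_0(T)+e(S,T)$, so $q_0$ is a quadratic refinement of $e$ on $V_0$. Since $q_0(\mathcal{R})=d+1\pmod 2$, the form $q_0$ descends to a quadratic refinement on $V$ precisely when $d$ is odd, in which case the torsor $\mathfrak{c}$ acquires a $G_K$-fixed point and hence $\mathfrak{c}=0$.

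For $d$ even one has $q_0(\mathcal{R})=1$, and the fix is to twist $q_0$ by an odd-sized subset. For each odd-sized $W\subseteq \mathcal{R}$ I would set $q_W(S):=q_0(S)+\#(S\cap W)\pmod 2$. The extra term is $\mathbb{F}_2$-linear in $S$, so $q_W$ remains a quadratic refinement, and $q_W(\mathcal{R})=1+\#W\equiv 0$, so $q_W$ descends to $V$. The difference $q_W-q_{W'}$ is the linear form $S\mapsto \#(S\cap(W\triangle W'))$, which vanishes on all of $V_0$ iff $W\triangle W'$ lies in the radical $\langle \mathcal{R}\rangle$ of $e|_{V_0}$, i.e., $W'\in\{W,\mathcal{R}\setminus W\}$. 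Hence $W\mapsto q_W$ descends to a $G_K$-equivariant injection from $\mathcal{W}$ into the set of quadratic refinements on $V$; a cardinality count ($|\mathcal{W}|=2^{2d}=|V|$) makes it a bijection, and the same formula shows it intertwines the two $V$-torsor structures (symmetric difference on $\mathcal{W}$; translation by $e(v,\cdot)$ on refinements). This identifies the two torsors and yields $\mathfrak{c}=\mathfrak{w}$.

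Now assume $G\cong S_{2d+2}$. The case $d$ even is then immediate from \Cref{prop:when_iso_to_S}, which gives $H^1(G,A[2])=\mathbb{F}_2\cdot \mathfrak{w}=\mathbb{F}_2\cdot \mathfrak{c}$. For \eqref{ugly_intersection} in the case $d$ odd, the same proposition reduces the claim to producing one $g\in G$ for which the restriction of $\mathfrak{w}$ to $\langle g\rangle$ is non-trivial. I would take $g$ to be any $(2d+2)$-cycle; triviality of $\mathrm{res}_{\langle g\rangle}(\mathfrak{w})$ is equivalent to $g$ fixing some $[W]\in\mathcal{W}$, i.e., either $gW=W$ or $gW=\mathcal{R}\setminus W$ for some odd-sized $W$. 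The first forces $W\in\{\emptyset,\mathcal{R}\}$ and is ruled out; in the second $W$ is a union of $g^2$-orbits, of which there are exactly two, each of size $d+1$, so $\#W\in\{0,d+1,2d+2\}$, and this is odd only when $d$ is even. Hence no such $W$ exists when $d$ is odd, as required. The only non-routine step is the explicit parametrization of quadratic refinements by $\mathcal{W}$ in the even case; the rest is combinatorics.
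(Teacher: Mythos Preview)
Your proof is correct and follows essentially the same route as the paper: the same explicit quadratic refinements $q_0$ and $q_W$, the same appeal to \Cref{prop:when_iso_to_S}, and the same reduction of \eqref{ugly_intersection} to exhibiting a single $g$ for which $\mathrm{res}_{\langle g\rangle}(\mathfrak{w})\neq 0$.

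The one genuine difference is your choice of witness $g$. The paper takes $g$ to be a product of two disjoint $(d{+}1)$-cycles and argues via the long exact sequence of \eqref{definined_2_tors_hyp}: it computes a basis of $\langle g\rangle$-invariants in $\mathbb{F}_2^{\mathcal{R}}/\langle\mathcal{R}\rangle$ and checks that each basis element is represented by a set of size $d{+}1\equiv 0\pmod 2$, so the map to $\mathbb{F}_2$ is zero and the connecting homomorphism sends $1$ to a nonzero class. You instead take $g$ to be a single $(2d{+}2)$-cycle and argue directly at the level of the torsor $\mathcal{W}$: any fixed point $[W]$ would force $W$ to be a union of $g$- or $g^2$-orbits, whose sizes are all multiples of $d{+}1$, hence even when $d$ is odd. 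Both arguments are short and self-contained; yours is arguably a touch cleaner since it avoids writing down an explicit basis of invariants and stays at the torsor level throughout. Your treatment of the even-$d$ case is also slightly more careful than the paper's in verifying that $W\mapsto q_W$ is a bijection of $V$-torsors rather than merely asserting it.
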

 
\begin{proof}
As we explain below, this follows readily from the standard description of quadratic refinements of the pairing $e$, as detailed, for example, in \cite[Section 5.2.3]{Dol12}.

Suppose first that $d$ is odd. Then the map $q:V_0\rightarrow \mathbb{F}_2$ given by $q(S)=\tfrac{1}{2}\#S \pmod 2$ descends to a map $V\rightarrow \mathbb{F}_2$, giving a  $G$-invariant quadratic refinement of $e$.  In particular, the class $\mathfrak{c}$ is trivial. Now suppose that $G\cong S_{2d+2}$. Since $H^1(G,A[2])$ is generated by the class $\mathfrak{w}$, to show that the intersection \eqref{ugly_intersection} is trivial it suffices to show that there exists $g\in G$ such that $\mathfrak{w}$ has non-trivial restriction to $H^1(\left \langle g \right \rangle,A[2])$.  Write $n=2d+2$ and fix a bijection between $\mathcal{R}$ and $[n]=\{1,2,...,n\}$, via which we identify $G=\textup{Gal}(f)$ with $S_n$. Take $g$ to be a product of disjoint $(d+1)$-cycles, say 
\[ g=(1~3~...~2d+1)(2~4~...~2d+2).\] Then $H^0(\left \langle g \right \rangle, V)$ is $2$-dimensional, generated by  the (class of the) set consisting of all odd numbers in $[n]$, and the (class of the) set consisting of all elements  $r\in [n]$ with $r\equiv 0,1 \pmod 4$.  Since both sets have size $d+1\equiv 0 \pmod 2$, we see from the sequence \eqref{definined_2_tors_hyp} that $\mathfrak{w}$ is not in the kernel of restriction to $H^1(\left \langle g \right \rangle,A[2])$. 


Now suppose that $d$ is even. Let $T\subseteq [n]$ be any odd sized subset. Then the map  $q_T:V\rightarrow \mathbb{F}_2$ given by \[q_T(S)=\#S\cap T+ \tfrac{1}{2}\#S \pmod 2\]
is well defined, and gives a quadratic refinement of $e$. From this, one checks that the map $T\mapsto q_T$ identifies $\mathcal{W}$ with the set of quadratic refinements of $e$, so that $\mathfrak{c}=\mathfrak{w}$. The remaining statement follows from \Cref{prop:when_iso_to_S}. 
\end{proof}
 
\subsection{Proof of \Cref{thm:hyp_curves_main}}

We can now deduce \Cref{thm:hyp_curves_main} from \Cref{main_thm}.

\begin{proof}[Proof of \Cref{thm:hyp_curves_main}] 
Let $\mathfrak{a}\in H^1(K,A[2])$ be as in the statement, and let $X$ denote the corresponding generalised Kummer variety. 

If $\mathfrak{a}\in \{0,\mathfrak{w}\}$ then $X$ contains a line defined over $K$, hence $X(K)\neq \emptyset$. Indeed, if $\mathfrak{a}=0$ then $X$ is the blow up of $A/\{\pm 1\}$ at the image of $A[2]$, and the exceptional fibre over the image of $0\in A(K)$ gives the sought line. If $\mathfrak{a}=\mathfrak{w}$ then   $X$ is the  minimal desingularisation of the quotient of $\textup{Pic}^{1}_{C/K}$ by the involution induced from the hyperelliptic involution $\iota$ on $C$. The natural embedding of $C$ into $\textup{Pic}^{1}_{C/K}$  induces an embedding $C/\iota \cong \mathbb{P}^1  \rightarrow X$.   

Suppose that $\mathfrak{a}\notin \{0,\mathfrak{w}\}$. By \Cref{prop:when_iso_to_S} we have $\mathfrak{a}\notin H^1(G,A[2])$. Combining \Cref{lem:yp+satisfies_all_conds} with \Cref{prop@c_in_hyp+case} we see that $A$ satisfies all three parts of \Cref{existence_of_frob_elements_prop_1}. Finally, the assumptions on $C$  ensure that, at $\mathfrak{p}_0$, $C$ has semistable reduction, trivial N\'{e}ron component group and conductor exponent $1$  (see e.g. \cite{hyperuser}). We can now conclude from  \Cref{main_thm}.
\end{proof}

\begin{remark} \label{rem: zariski_dense}
In the setting of \Cref{thm:hyp_curves_main}, the proof  shows that, if  $\mathfrak{a}\notin \{0,\mathfrak{w}\}$ and the Kummer variety $X$ associated to $\mathfrak{a}$ is everywhere locally soluble, then (we can apply   \Cref{main_thm} to conclude that) it has a Zariski-dense set of $K$-points.
\end{remark}
 
 \subsection{The case $d=2$} \label{Kummer_intersection_of_quadrics}
 
 We conclude by specialising to the case $d=2$, where \Cref{thm:hyp_curves_main} can be made more explicit. Assume from now on that $f(x)$ has degree $6$ (thus $C$ has genus $2$).
 
 Write $F=K[x]/(f(x))$ and denote by $\theta$ the image of $x$ in $F$. As shown in \cite{MR1465369} (see Equation (12) in particular), there is a canonical isomorphism
 \begin{equation} \label{poonen_cohom}
 H^1(K,A[2])^{\cup \mathfrak{c}=0}/\left \langle \mathfrak{c} \right \rangle \cong \ker\big(F^{\times}/(K^\times F^{\times 2})\stackrel{\textup{N}_{F/K}}{\longrightarrow}K^{\times}/K^{\times 2}\big),
 \end{equation}
 where $N_{F/K}:F^{\times}\rightarrow K^{\times}$ is the norm map and $H^1(K,A[2])^{\cup \mathfrak{c}=0}$ denotes the subgroup of $H^1(K,A[2])$ consisting of elements $\mathfrak{b}$ for which $\mathfrak{b}\cup \mathfrak{c}=0$ in $H^2(K,\mu_2)$.
 
 \begin{lemma} \label{cup_with_c}
Let $\mathfrak{a}\in H^1(K,A[2])$ and denote by $X$ the corresponding generalised Kummer surface. If $X(K_v)\neq \emptyset $ for all places $v$, then $\mathfrak{a}\in H^1(K,A[2])^{\cup \mathfrak{c}=0}$.  
 \end{lemma}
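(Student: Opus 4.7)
The plan is to verify that every local invariant of $\mathfrak{a}\cup\mathfrak{c}\in H^2(K,\mu_2)$ vanishes, and then conclude globally by the Brauer--Hasse--Noether reciprocity/Hasse principle for $H^2(K,\mu_2)\hookrightarrow \textup{Br}(K)$.

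First, fix a place $v$. By \Cref{satisfies_local_selmer}, the assumption $X(K_v)\neq\emptyset$ yields a quadratic character $\chi_v:G_{K_v}\rightarrow \mu_2$ such that $\textup{res}_v(\mathfrak{a})\in \mathscr{S}_v(A,\chi_v)$. On the other hand, as recalled at the end of \Cref{ssec:class_c} (applied to $A^{\chi_v}$ in place of $A$, using that $\psi$ identifies $\mathfrak{c}$ with the analogous class for $A^{\chi_v}$), the class $\mathfrak{c}$ lies in $\textup{Sel}^2(A^{\chi_v}/K)$ for \emph{every} quadratic character; localising, $\textup{res}_v(\mathfrak{c})\in \mathscr{S}_v(A,\chi_v)$ as well.

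Now by \Cref{local_tate_pairing_identified_rmk} the subspace $\mathscr{S}_v(A,\chi_v)$ is maximal isotropic for the local Tate pairing \eqref{eq:local_tate}. Since both $\textup{res}_v(\mathfrak{a})$ and $\textup{res}_v(\mathfrak{c})$ lie in $\mathscr{S}_v(A,\chi_v)$, we obtain
\[
\textup{inv}_v\bigl(\textup{res}_v(\mathfrak{a}\cup\mathfrak{c})\bigr)=\textup{res}_v(\mathfrak{a})\cup \textup{res}_v(\mathfrak{c})=0
\]
as elements of $\tfrac{1}{2}\mathbb{Z}/\mathbb{Z}$, where the cup-product is taken with respect to the Weil pairing so as to land in $H^2(K_v,\mu_2)$.

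Since the local invariant vanishes at every place of $K$, the Hasse principle for the Brauer group (applied to the $2$-torsion class $\mathfrak{a}\cup\mathfrak{c}\in H^2(K,\mu_2)\subseteq \textup{Br}(K)$) forces $\mathfrak{a}\cup\mathfrak{c}=0$ in $H^2(K,\mu_2)$. Thus $\mathfrak{a}\in H^1(K,A[2])^{\cup \mathfrak{c}=0}$, as required. The only step that requires care is the identification, at each place $v$, of the local cup pairing coming from the Weil form with the Tate local pairing from \eqref{eq:local_tate}, and the fact that the twisted Kummer image $\mathscr{S}_v(A,\chi_v)$ really is isotropic for this pairing; but both of these facts have been set up in \Cref{local_tate_pairing_identified_rmk} and \Cref{selmer_conds_intro_notat}, so there is no essential obstacle.
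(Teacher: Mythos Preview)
Your argument is correct in substance and follows the same overall arc as the paper (local vanishing of $\mathfrak{a}\cup\mathfrak{c}$ via isotropy of the twisted Kummer image, then Albert--Brauer--Hasse--Noether), but the key local step is handled differently. The paper does not put $\textup{res}_v(\mathfrak{c})$ into $\mathscr{S}_v(A,\chi_v)$ at all; instead it uses the Poonen--Rains self-cup identity \cite[Theorem~3.4]{MR2915483}, which gives $\textup{res}_v(\mathfrak{a})\cup\textup{res}_v(\mathfrak{a})=\textup{res}_v(\mathfrak{a})\cup\textup{res}_v(\mathfrak{c})$, and then concludes from isotropy that the left-hand side vanishes. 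Your route avoids invoking that identity and instead uses the (also Poonen--Rains) fact that the local $\mathfrak{c}$-class lies in the Kummer image for every twist; both are legitimate, and yours is arguably more direct once one knows $\mathfrak{c}$ is always locally Selmer.

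One phrasing issue worth tidying: $\chi_v$ is a \emph{local} character, so there is no abelian variety $A^{\chi_v}$ over $K$ and no global Selmer group $\textup{Sel}^2(A^{\chi_v}/K)$ to speak of. What you want is the purely local statement that, for the twist $A^{\chi_v}/K_v$, the class in $H^1(K_v,A^{\chi_v}[2])$ parametrising quadratic refinements of the Weil pairing lies in the Kummer image $\mathscr{S}_v(A^{\chi_v})$; this is \cite[Remark~3.3]{MR2915483} applied over $K_v$. Since the identification $A[2]\cong A^{\chi_v}[2]$ carries $\textup{res}_v(\mathfrak{c})$ to that local class (as in \Cref{selmer_conds_intro_notat} and the discussion following \eqref{2_tors_ident}), you obtain $\textup{res}_v(\mathfrak{c})\in\mathscr{S}_v(A,\chi_v)$ as needed. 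With that adjustment the proof is complete.
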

 
 \begin{proof}
 Let $v$ be a place of $K$. Since $X(K_v)\neq \emptyset$ we have, as in \Cref{satisfies_local_selmer},  that $\textup{res}_v(\mathfrak{a})\in \mathscr{S}_v(A,\chi)$ for some local quadratic character $\chi:G_{K_v}\rightarrow \mu_2$. Since $\mathscr{S}_v(A,\chi)$ is isotropic for the local Tate pairing  \eqref{eq:local_tate}, this gives
\[0=\textup{res}_v(\mathfrak{a})\cup \textup{res}_v(\mathfrak{a})\stackrel{}{=}\textup{res}_v(\mathfrak{a})\cup \textup{res}_v(\mathfrak{c}),\]
where the second equality is \cite[Theorem 3.4]{MR2915483}. Thus $\textup{res}_v(\mathfrak{a}\cup \mathfrak{c})=0$ for all places $v$, hence $\mathfrak{a}\cup \mathfrak{c}=0$ in $H^1(K,\mu_2)$ by the Albert–Brauer–Hasse–Noether theorem.
 \end{proof}
 
 By \Cref{cup_with_c},  \Cref{thm:hyp_curves_main} is only interesting when applied to classes in $ H^1(K,A[2])^{\cup \mathfrak{c}=0}$. When this is the case,  the corresponding Kummer surface admits a description as an intersection of three quadrics in $\mathbb{P}^5$.
 
 \begin{lemma} \label{prop:kummer_equations}
 Let $\lambda \in  F^\times$ be such that $N_{F/K}(\lambda)\in K^{\times 2}$. Let   $\mathfrak{a}\in H^1(K,A[2])^{\cup \mathfrak{c}=0}$ be an element whose image in $H^1(K,A[2])^{\cup \mathfrak{c}=0}/\left \langle \mathfrak{c}\right \rangle $ corresponds to $\lambda$ via \eqref{poonen_cohom}, and let $X$ be the generalised Kummer surface associated to $\mathfrak{a}$. Then $X$ is isomorphic to the   variety defined by the three quadratic equations
 \begin{equation}\label{Kummer_eqs}
 \textup{Tr}_{F/K}\big(\lambda u^2/f'(\theta)\big)=\textup{Tr}_{F/K}\big(\lambda \theta u^2/f'(\theta)\big)=\textup{Tr}_{F/K}\big(\lambda  \theta^2 u^2/f'(\theta)\big)=0
 \end{equation}
 in $\mathbb{P}(F)\cong \mathbb{P}^5.$
 Here, $u$ is an $F$-variable and $\textup{Tr}_{F/K}$ is the trace map. 
 \end{lemma}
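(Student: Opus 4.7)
The plan is to combine the Poonen--Schaefer explicit parametrisation \eqref{poonen_cohom} with a concrete projective model of the $2$-covering $Y$ associated to $\mathfrak{a}$, and then take the antipodal quotient. This is fundamentally a geometric unpacking of the algebraic datum $\lambda$, so the proof amounts to tracing through coordinates and identifying the resulting intersection of quadrics with the Kummer surface $X$.

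First, I would recall the explicit realisation of a generic point of $A$ via its Mumford representation: an effective degree-$2$ divisor class on $C$ is represented by an unordered pair $(x_1,y_1),(x_2,y_2)$ of points on $C$, and the $2$-descent map of \eqref{poonen_cohom} sends this class to the image of $(\theta-x_1)(\theta-x_2)\in F^\times$, whose norm $y_1^2 y_2^2$ is visibly a square. Twisting this picture by $\lambda$, I would describe $Y$ as the locus in $\mathbb{A}(F)$ of $u\in F$ satisfying a quadratic relation of the form $u^2 = \lambda\cdot (\textup{quadratic in }\theta)$ together with the vanishing of the three low-order traces appearing in \eqref{Kummer_eqs}. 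The antipodal involution on $A$ then lifts to $u\mapsto -u$ on $Y$, and its fixed-point set maps to the $2$-torsion of $A$, as required.

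Next, descending to the Kummer: the involution acts trivially on the quadratic invariant $u\otimes u\in\textup{Sym}^2 F$, so the quotient $Y/\iota$ is controlled by $u^2\in F$, and its equations are precisely the three linear-in-$u^2$ conditions $\textup{Tr}_{F/K}(\lambda\theta^i u^2/f'(\theta))=0$ for $i=0,1,2$. These arise naturally from the classical Euler--Jacobi trace duality $\textup{Tr}_{F/K}(\theta^i/f'(\theta))=\delta_{i,\,\deg f-1}$ for $0\leq i\leq \deg f-1$, which identifies $F$ with its own $K$-dual under the bilinear form $(a,b)\mapsto \textup{Tr}_{F/K}(ab/f'(\theta))$. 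Under this identification, the three vanishing conditions extract the ``degree ${<}3$'' part of $\lambda u^2$, which is precisely what is needed to land in the image of the Jacobian (dimension $d=2$) rather than in a larger Picard scheme of $C$. The assumption $N_{F/K}(\lambda)\in K^{\times 2}$ ensures consistency of the sign choices in the square-root relation, corresponding on the cohomology side to the vanishing of $\mathfrak{a}\cup\mathfrak{c}$.

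The main obstacle will be to verify that the described intersection of three quadrics in $\mathbb{P}(F)\cong\mathbb{P}^5$ is actually isomorphic to the generalised Kummer $X=\widetilde{Y}/\widetilde{\iota}$, not merely birational to the singular naive quotient $Y/\iota$. For this I would employ a degree-and-genus count: a generic intersection of three quadrics in $\mathbb{P}^5$ is a smooth $K3$ surface of degree $8$ with trivial canonical class, hence coincides with its own minimal model. Combining this with the explicit rational map $\widetilde{Y}/\widetilde{\iota}\dashrightarrow V(\textup{three quadrics})$ read off from the coordinate construction, minimality of the target forces the rational map to extend to an isomorphism. An alternative, more hands-on verification would proceed by checking directly that the sixteen exceptional divisors arising from resolving the potential nodes of $Y/\iota$ are all accounted for in the $\mathbb{P}^5$-embedding, via the geometry of the net of quadrics.
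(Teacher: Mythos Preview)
Your proposal takes a genuinely different route from the paper. The paper's proof is two sentences of citation: it identifies the three-quadric intersection with the surface $V_\lambda$ of Flynn--Testa--van~Luijk via \cite[Proposition~2.6]{Fisher_Yan23}, and then invokes the discussion after \cite[Theorem~7.4]{Flynn_testa_van_luijk} for the isomorphism $V_\lambda\cong X$. No independent argument is given. What you are sketching is essentially the content of those cited works, reconstructed from first principles: build the $2$-covering $Y$ explicitly from the datum $\lambda$, pass to the quotient by $u\mapsto -u$, and identify the result with the three-quadric intersection. This is a legitimate and self-contained approach, and is in spirit exactly what \cite{Flynn_testa_van_luijk} does, though their construction of $Y$ is more elaborate than your one-line summary suggests.

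That said, there are genuine gaps in your sketch that would need to be filled before it constitutes a proof. First, your description of $Y$ as ``the locus in $\mathbb{A}(F)$ of $u$ satisfying $u^2=\lambda\cdot(\textup{quadratic in }\theta)$ together with the three trace vanishings'' is not a complete definition of a variety: you have not specified how the quadratic-in-$\theta$ factor varies, nor how this locus acquires the structure of a $2$-covering of $A$ (as opposed to a $2$-covering of some other Picard component, or a singular degeneration). In \cite{Flynn_testa_van_luijk} this takes real work. Second, your endgame argument --- that a generic intersection of three quadrics in $\mathbb{P}^5$ is a smooth K3, hence minimal, hence the rational map from $\widetilde{Y}/\widetilde{\iota}$ extends to an isomorphism --- is incomplete: you have not checked that \emph{this particular} intersection is smooth (it need not be for arbitrary $\lambda$), and even granting smoothness, extending a rational map between surfaces to a morphism requires knowing the map is defined in codimension one and then invoking minimality on both sides, not just the target. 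Your alternative suggestion of tracking the sixteen exceptional curves is closer to what is actually done in the literature, but again this is a nontrivial calculation, not a one-liner.
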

 
 \begin{proof}
 By \cite[Proposition 2.6]{Fisher_Yan23} (see Equation (27) in particular), the surface in $\mathbb{P}(F)$ defined by the $3$ quadratic forms in the statement agrees with the surface $V_\lambda$ defined in \cite[Section 4]{Flynn_testa_van_luijk}. That this is isomorphic to the generalised Kummer variety $X$ is shown in the discussion following \cite[Theorem 7.4]{Flynn_testa_van_luijk}. See also  \cite[Section 2.6]{Fisher_Yan23}.
 \end{proof}
 
 \begin{remark}
In the statement of \Cref{prop:kummer_equations}, the equations for $X$ in $\mathbb{P}(F)$  visibly depend only on the class of $\mathfrak{a}$ in $H^1(K,A[2])/\left \langle \mathfrak{c}\right \rangle$. For a direct proof that the Kummer varieties associated to the class $\mathfrak{c}$ and to the zero-class are isomorphic, see \cite[Theorem 3.5]{GHC22}  (this fact goes back to work of Cassels--Flynn \cite[Chapter 16]{CasselsFlynn96}). Twisting this isomorphism by  $\mathfrak{a}$ then shows that the Kummer varieties associated to $\mathfrak{a}$ and $\mathfrak{a}+\mathfrak{c}$ are isomorphic. (We are not aware of this phenomenon extending beyond genus $2$ Jacobians.)
 \end{remark}
 
 We can use \Cref{prop:kummer_equations} to give a more explicit version of \Cref{thm:hyp_curves_main} in the case $d=2$. This is a direct analogue of
 \cite[Theorem B]{HS16}, which treats the case when $f(x)$ factors as the product of a linear polynomial and an irreducible quintic. As usual, we denote by $c_f$ the leading coefficient of $f(x)$ and denote by $\textup{disc}(f)$ the discriminant of $f(x)$. 
 
 \begin{theorem}
Suppose that $f(x)$ has Galois group $S_6$. Let 
$\lambda \in F^{\times}$  be such that $N_{F/K}(\lambda)\in K^{\times 2}$,
 and let $X$ be the variety in $\mathbb{P}(F)\cong \mathbb{P}^5$ cut out by the three quadratic froms \eqref{Kummer_eqs}. Suppose  there is an odd prime $\mathfrak{p}_0$ of $K$ such that: 
 \begin{itemize}
\item[(i)] $f(x)$ is integral at $\mathfrak{p}_0$,  
 $\textup{ord}_{\mathfrak{p}_0}\textup{disc}(f)=1$, $\textup{ord}_{\mathfrak{p}_0}(c_f)=0,$   and
\item[(ii)] there is $t\in K^{\times}$ such that $\textup{ord}_\mathfrak{q}(t\lambda)\equiv 0 \pmod 2$ for each prime $\mathfrak{q}$ of $F$ lying over $\mathfrak{p}_0$.
\end{itemize}
Assume  that the $2$-primary part of the Shafarevich--Tate group of every quadratic twist of $A$ with $2^\infty$-Selmer rank $1$ is finite.  Then $X$ satisfies the Hasse principle. If moreover  $\lambda \notin K^{\times}F^{\times 2}$, then $X$ has a Zariski-dense set of $K$-points.
 \end{theorem}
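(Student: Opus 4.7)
The plan is to deduce the result from Corollary~\ref{thm:hyp_curves_main} via the geometric identification of $X$ provided by Lemma~\ref{prop:kummer_equations}. That lemma exhibits $X$ as the generalised Kummer surface attached to a class $\mathfrak{a}\in H^1(K,A[2])^{\cup\mathfrak{c}=0}$ whose image modulo $\langle\mathfrak{c}\rangle$ under \eqref{poonen_cohom} is represented by $\lambda$. Since $d=2$ is even and $\textup{Gal}(f)\cong S_6$, Propositions~\ref{prop:when_iso_to_S} and \ref{prop@c_in_hyp+case} give $H^1(G,A[2])=\langle\mathfrak{c}\rangle=\langle\mathfrak{w}\rangle$, and both elements of this subgroup correspond to the trivial class in $F^\times/(K^\times F^{\times 2})$ under \eqref{poonen_cohom}.

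If $\lambda\in K^\times F^{\times 2}$, then $\mathfrak{a}\in\{0,\mathfrak{w}\}$, and the proof of Corollary~\ref{thm:hyp_curves_main} produces a line on $X$ defined over $K$, so $X(K)\neq\emptyset$ and the Hasse principle holds trivially. Assume henceforth that $\lambda\notin K^\times F^{\times 2}$, so $\mathfrak{a}\notin H^1(G,A[2])$. The aim is then to apply Corollary~\ref{thm:hyp_curves_main}. All of its hypotheses -- on the Galois group of $f$, integrality of $f$ at $\mathfrak{p}_0$, the discriminant and leading-coefficient valuations, and finiteness of $2$-primary Sha -- are given directly. The one non-trivial condition to verify is that $\mathfrak{a}$ can be chosen unramified at $\mathfrak{p}_0$ within its coset modulo $\mathfrak{c}$.

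The plan for this step is to compare the two sides of the localisation of \eqref{poonen_cohom} at $\mathfrak{p}_0$. The hypotheses $\textup{ord}_{\mathfrak{p}_0}\textup{disc}(f)=1$ and $\textup{ord}_{\mathfrak{p}_0}(c_f)=0$ force $C$ to have nodal reduction at $\mathfrak{p}_0$, so, as in the proof of Corollary~\ref{thm:hyp_curves_main}, $A$ is semistable there with N\'eron component group of odd order. By Lemma~\ref{selmer_conds_lemma}(ii), the unramified subgroup of $H^1(K_{\mathfrak{p}_0},A[2])$ is then the Kummer image $\mathscr{S}_{\mathfrak{p}_0}(A)$. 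Under the localisation of the Poonen--Schaefer map \eqref{poonen_cohom} at $\mathfrak{p}_0$, this Kummer image modulo $\langle\mathfrak{c}\rangle$ corresponds to the subgroup of (norm-trivial) classes in $F_{\mathfrak{p}_0}^\times/(K_{\mathfrak{p}_0}^\times F_{\mathfrak{p}_0}^{\times 2})$ represented by elements $\mu$ with $\textup{ord}_\mathfrak{q}(\mu)\equiv 0\pmod 2$ for every prime $\mathfrak{q}$ of $F$ above $\mathfrak{p}_0$. Condition~(ii) supplies precisely such a representative, namely $t\lambda$, so $\mathfrak{a}$ may be taken unramified at $\mathfrak{p}_0$. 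Corollary~\ref{thm:hyp_curves_main} then yields the Hasse principle, and Remark~\ref{rem: zariski_dense} upgrades the conclusion to Zariski-density of $X(K)$ whenever $X$ is everywhere locally soluble.

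The main obstacle is the local matching used in the previous paragraph: showing that $\mathscr{S}_{\mathfrak{p}_0}(A)$ corresponds under the local version of \eqref{poonen_cohom} to the even-valuation subgroup on the Poonen--Schaefer side. This requires unwinding the construction of \eqref{poonen_cohom} from \cite{MR1465369} in the presence of (at worst) nodal reduction, and comparing the Kummer image of $A(K_{\mathfrak{p}_0})$ with the image of $\mathfrak{p}_0$-integral units under the ``$x-T$'' map. Under the odd-component-group hypothesis this is essentially classical, but it is the only step not formally reduced to results stated earlier in the paper.
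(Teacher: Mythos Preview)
Your overall strategy matches the paper's: identify $X$ with a generalised Kummer surface via Lemma~\ref{prop:kummer_equations}, reduce to Corollary~\ref{thm:hyp_curves_main} (together with Remark~\ref{rem: zariski_dense}), and verify that the class $\mathfrak{a}$, or its translate by $\mathfrak{c}$, is unramified at $\mathfrak{p}_0$. The execution of this last step differs. You route through the identification $H^1_{\textup{ur}}(K_{\mathfrak{p}_0},A[2])=\mathscr{S}_{\mathfrak{p}_0}(A)$ from Lemma~\ref{selmer_conds_lemma}(ii) and then seek to match the Kummer image, modulo $\langle\mathfrak{c}\rangle$, with the even-valuation subgroup under the local Poonen--Schaefer map, correctly flagging this as the nontrivial step. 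The paper instead bypasses the Kummer image entirely: it realises the two lifts $\mathfrak{a}$, $\mathfrak{a}+\mathfrak{c}$ of $\lambda$ explicitly as the torsors $\mathcal{T}_{\lambda,\pm m}$ of square roots of $\lambda$ in $F_{\overline{K}}^\times$ with prescribed norm $\pm m$, observes that one of these is unramified at $\mathfrak{p}_0$ iff the $(\textup{Res}_{F/K}\mu_2)/\mu_2$-torsor $\{z^2=\lambda\}\subset \textup{Res}_{F/K}(\mathbb{G}_m)/\{\pm 1\}$ has a $K_{\mathfrak{p}_0}^{\textup{ur}}$-point, and then concludes by the argument of \cite[Proof of Theorem~B]{HS16}. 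This is more direct: it reduces unramifiedness to solving $z^2=\lambda$ over $F\otimes_K K_{\mathfrak{p}_0}^{\textup{ur}}$ up to a scalar in $K_{\mathfrak{p}_0}^{\textup{ur}}$, which condition~(ii) guarantees since every unit is a square over the maximal unramified extension in odd residue characteristic. Your approach would also work, but the detour through $\mathscr{S}_{\mathfrak{p}_0}(A)$ is unnecessary; unramifiedness can be checked directly on the torsor side without ever invoking the Kummer map or the $x-T$ description of local points.
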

 
 \begin{proof}
  Let   $\mathfrak{a}\in H^1(K,A[2])$ be an element whose image modulo $\mathfrak{c}$ corresponds to $\lambda$ via \eqref{poonen_cohom}. Note that if $\lambda \notin K^{\times} F^{\times 2}$ then $\mathfrak{a}\notin \{0,\mathfrak{c}\}$. By \Cref{prop:kummer_equations} and  \Cref{rem: zariski_dense},  it suffices to show that condition (ii) of the statement implies that one of $\mathfrak{a}$, $\mathfrak{a}+\mathfrak{c}$ is unramified at $\mathfrak{p}_0$.\footnote{In fact, as was shown in the proof of \Cref{omega_prop}, the class $\mathfrak{c}$ is unramified at $\mathfrak{p}_0$. Thus $\mathfrak{a}$ is unramified at $\mathfrak{p}_0$ if and only if $\mathfrak{a}+\mathfrak{c}$ is.}
  
To make the map in \eqref{poonen_cohom} explicit, let $m\in K^{\times}$ be such that $N_{F/K}(\lambda)=m^2$. Define the $G_K$-set 
\[T_{\lambda,m}=\Big\{z\in F_{\overline{K}}^{\times} ~~\colon~~z^2=\lambda, ~N_{F_{\overline{K}}/\overline{K}}(z)=m\Big\}/\{\pm 1\}.\] 
Here we write $F_{\overline{K}}$ as shorthand for $F\otimes_K \overline{K}$, and the action of $-1$ is given by sending $z$ to $-z$. 
Multiplication in $F_{\overline{K}}^{\times}$ makes $T_{\lambda,m}$ into a principal homogeneous space for the $G_K$-module $\mu_2(F_{\overline{K}})_{N=1}/\mu_2(K)$, where $\mu_2(F_{\overline{K}})_{N=1}=\{\epsilon \in  F_{\overline{K}}^\times\colon \epsilon^2=1, N_{F_{\overline{K}}/\overline{K}}(\epsilon)=1\}.$  Now \eqref{explicit_2_tors}  gives a canonical isomorphism of $G_K$-modules \[A[2]\cong  \mu_2(F_{\overline{K}})_{N=1}/\mu_2(K),\] so the two $G_K$-sets $T_{\lambda, \pm m}$ give rise to $2$ classes in $H^1(K,A[2])$. By \cite[Proposition 2.6]{Flynn_testa_van_luijk}, these are precisely the classes $\mathfrak{a}$ and $\mathfrak{a}+\mathfrak{c}$. 

To make contact with the proof of \cite[Theorem B]{HS16}, let $\mathcal{T}_{\lambda,\pm m}$ be the torsor under the group scheme $A[2]$ whose set of $\overline{K}$-points is $T_{\lambda,\pm m}$. Then one of $\mathfrak{a}$ and $\mathfrak{a}+\mathfrak{c}$ is unramified at $\mathfrak{p}_0$ if and only if the $(\textup{Res}_{F/K}\mu_2)/\mu_2$-torsor
$\mathcal{T}_{\lambda,m}\sqcup \mathcal{T}_{\lambda,-m}$
has a point over $K_{\mathfrak{p}_0}^{\textup{ur}}$, where $\textup{Res}_{F/K}$ denotes Weil restriction. The torsor $\mathcal{T}_{\lambda,m}\sqcup \mathcal{T}_{\lambda,-m}$ is isomorphic to the subscheme of $\textup{Res}_{F/K}(\mathbb{G}_m)/\{\pm 1\}$ given by the equation $z^2=\lambda$. Now arguing exactly as in \cite[Proof of Theorem B]{HS16} gives the result.
 \end{proof}

\end{document}